\documentclass[12pt]{amsart}%
\usepackage{amsfonts}
\usepackage{amsmath}
\usepackage{amssymb}
\usepackage{amsthm}
\usepackage{mathrsfs}
\usepackage{graphicx}%
\usepackage{amsmath}
\usepackage{setspace}
\usepackage{hyperref}
\usepackage{cite}
\usepackage{amsfonts}
\usepackage{algorithmic}
\usepackage{textcomp}
\usepackage{tabularx}
\usepackage{enumitem}
\usepackage{amsmath,amssymb,amsthm,mathtools}
\usepackage{mathrsfs}
\usepackage{microtype}
\usepackage{enumitem}
\usepackage{booktabs}
\usepackage{array}
\usepackage{tabularx}
\usepackage{xcolor}
\usepackage[T1]{fontenc}
\usepackage{lmodern}
\usepackage{hyperref}
\usepackage[nameinlink,capitalise,noabbrev]{cleveref}

\allowdisplaybreaks[4]
\makeatletter
\@addtoreset{equation}{section}
\makeatother

\newtheorem{theorem}{Theorem}[section]

\newtheorem{corollary}[theorem]{Corollary}

\newtheorem{lemma}[theorem]{Lemma}

\newtheorem{proposition}[theorem]{Proposition}
\newtheorem{remark}[theorem]{Remark}

\newcommand{\R}{\mathbb{R}}

\newcommand{\dd}{\,\mathrm{d}}
\newcommand{\esssup}{\mathop{\mathrm{ess\,sup}}}
\newcommand{\norm}[1]{\left\lVert #1\right\rVert}
\newcommand{\abs}[1]{\left|#1\right|}
\newcommand{\ip}[2]{\left(#1,#2\right)}
\newcommand{\Ran}{\operatorname{Ran}}
\newcommand{\dist}{\operatorname{dist}}

\newcommand{\M}{\mathcal M}

\newcommand{\MT}{\mathcal M_T}

\newcommand{\Id}{\mathrm I}

\newcommand{\one}{\mathbf 1}

\makeatletter
\def\@makefnmark{}
\makeatother

\begin{document}

\title[Boundary Spectral Inequalities]{Boundary Spectral Inequalities from Measurable Sets on $W^{2,\infty}$ manifolds and their applications}

\author{Lu Chen}
\address[Lu Chen]{Key Laboratory of Algebraic Lie Theory and Analysis of Ministry of Education, School of Mathematics and Statistics, Beijing Institute of Technology, Beijing
100081, PR China;  Tangshan Research Institute, Beijing Institute of Technology, Tangshan 063000, PR China}
\email{chenlu5818804@163.com}

\author{Haolin Liu$^{*}$}
\address[Haolin Liu]{Key Laboratory of Algebraic Lie Theory and Analysis of Ministry of Education, School of Mathematics and Statistics, Beijing Institute of Technology, Beijing
100081, PR China}
\email{haolinliu2023@126.com, 3120256210@bit.edu.cn}

\author{Hongyu Liu}
\address[Hongyu Liu]{Department of Mathematics, City University of Hong Kong, Hong Kong SAR, China}
\email{hongyu.liuip@gmail.com, hongyliu@cityu.edu.hk}

\address{}

\keywords{Boundary spectral inequality;
measurable boundary observation;
heat equation;
quantitative unique continuation.}
\thanks{$*$ Corresponding author.}
\thanks{The first author was partly supported by the  National Natural Science Foundation of China (No. 12271027) and Hebei Natural
Science Foundation (No. A2025105003).}

\begin{abstract}
Let $(\M,g)$ be a compact connected $W^{2,\infty}$ Riemannian manifold with nonempty boundary and a uniformly elliptic Lipschitz metric. We establish quantitative boundary spectral inequalities for both Dirichlet and Neumann eigenfunctions from arbitrary measurable subsets of $\partial\M\times(-s_0,s_0)$ of positive surface measure. Moreover, we apply the Dirichlet spectral inequality to establish the boundary observability estimate
\[
 \norm{u(T)}_{L^2(\M)}^2
 \le C\int_J\abs{\partial_{\nu_g}u(x,t)}^2\dd S_g\dd t,
\]
where $J\subset\partial\M\times(0,T)$ is measurable and has positive surface--time measure. A localized theorem requiring $W^{2,\infty}$ regularity only near the observed boundary patch is retained as a separate consequence. The proof of the observability inequality combines quantitative
continuation from positive-measure boundary sets with low-frequency
spectral concentration. This result shows that boundary observability of solutions to the heat equation can also be achieved in $W^{2,\infty}$ domains.
Finally, as an application, we prove that measurements of the boundary normal
derivative on $J$
uniquely determine the initial state of heat equation. Under an a priori bound in
$D(-\Delta_{g, D}^{s/2})$, the recovery satisfies a logarithmic
stability estimate of optimal order $s/2$.

\end{abstract}

\maketitle
\section{Introduction}

Spectral inequalities quantify how a low-frequency function is determined by
 observations on a smaller set.  Together with the decay of high
frequencies, they are the basis of the Lebeau--Robbiano method for heat
observability and null controllability; see
\cite{LebeauRobbiano,JerisonLebeau,Miller}. A substantial body of literature has extended observability from open observation regions to sets that are merely
measurable in space or time
\cite{ApraizEscauriaza,PhungWang,ApraizEtAl,EscauriazaMontanerZhang} or
rough manifolds \cite{BurqMoyano}.

Moreover, quantitative observability estimates for heat equations
have been further developed within the framework of spectral methods and
dissipation of high frequencies. In particular, Ervedoza and Zuazua
established sharp observability estimates for heat equations by
exploiting the Lebeau--Robbiano strategy
\cite{ErvedozaZuazua2011}.

In this paper, we study boundary spectral inequalities. At the qualitative level, the relevant boundary unique
continuation problem is the following.  Let \(u\) be harmonic
in a domain \(\Omega\), continuous up to relatively open
boundary patch \(\Sigma\subset\partial\Omega\), suppose that
\begin{equation*}
u\equiv0 \mbox{ on $\Sigma$}
\end{equation*}
and that $\partial_\nu u$
vanishes on a subset of $\Sigma$ for positive surface measure,
must $u$ vanish identically?
For $C^{1,1}(W^{2,\infty})$ domains, this qualitative boundary unique
continuation property follows from the work of Lin
\cite{Lin1991} on frequency functions and nodal sets.  More generally, Tolsa~\cite{Tolsa2023} later proved
the qualitative boundary unique continuation property for
\(C^1\) domains and, more generally, for Lipschitz graph
patches with sufficiently small slope.

These qualitative results show that positive-measure boundary
Cauchy data determine a harmonic function uniquely. However, spectral
inequality requires a quantitative statement: we need to control the solution in a neighboring interior region
in terms of the boundary data and an a priori global norm.  The quantitative propagation estimate from
positive-measure boundary subsets was established by
Burq--Zuily~\cite{BurqZuily} on
\(W^{2,\infty}\) domains, for both homogeneous
Dirichlet and homogeneous Neumann boundary conditions.

Boundary spectral inequalities lie one step beyond qualitative
unique continuation.  Apraiz, Escauriaza, Wang, and Zhang
\cite[Theorems~2, 9, and~10; Remark~11]{ApraizEtAl} established an
auxiliary-variable normal-trace inequality on an open boundary
cylinder and obtained measurable-boundary heat observability
under local real-analyticity assumptions at the observation
point.

Although the local continuation arguments are formulated in Euclidean coordinates,
they extend to the manifold setting through boundary normal coordinates and a finite
covering of $\partial \M$. In each boundary chart, the problem is reduced to a uniformly
elliptic equation on a Euclidean half-space, with the geometry encoded in the coefficients.
 $s$ denotes the auxiliary extension variable used in the spectral lifting, and
$s_0>0$ is a fixed sufficiently small parameter defining the auxiliary cylinder
$\partial \M \times (-s_0,s_0)$.

Our approach to measurable boundary sets does not use spatial
analyticity. Instead, we combine the quantitative boundary
continuation theorem of Burq--Zuily with local spectral
concentration. This gives Dirichlet and Neumann boundary spectral
inequalities on arbitrary positive-measure subsets of
$\partial\M\times(-s_0,s_0)$. For the heat equation, the same
framework yields $L^2$ observability under $W^{2,\infty}$ regularity near
the observed boundary patch, subject to the global Dirichlet
spectral hypothesis.

The results of Lin and Tolsa are qualitative and do not provide the
quantitative interpolation estimates used here. We therefore work
at the $W^{2,\infty}$ regularity level required by the quantitative
continuation theorem of Burq--Zuily and by the boundary regularity
estimates below. The main contributions are as follows.

\begin{enumerate}
\item[(i)]
We establish Dirichlet and Neumann boundary spectral inequalities
on compact connected $W^{2,\infty}$ manifolds with Lipschitz metrics.
The observation set may be any measurable subset
\[
J\subset\partial \M\times(-s_0,s_0)
\]
of positive surface measure; in particular, the observation may be
restricted to an arbitrary partial boundary portion.

\item[(ii)]
In the Neumann case, the quantitative continuation estimate involves
the full boundary gradient, whereas the observation contains only the
boundary value. We recover the tangential first-order jet on a
positive-measure subset and then recover the remaining additive
constant by one low-value point, including the zero Neumann mode.

\item[(iii)]
We prove measurable-set partial-boundary observability for the
homogeneous Dirichlet heat equation on compact $W^{2,\infty}$
manifolds. Since a fixed-time conormal spectral inequality may fail
because of cancellation between distinct eigenvalues, we recover the
time jet of the low-frequency boundary trace and transfer it to an
auxiliary elliptic extension.

\item[(iv)]
For bounded Lipschitz Euclidean domains satisfying the global
Dirichlet Lebeau--Robbiano spectral hypothesis, we localize the
Dirichlet heat argument so that $W^{2,\infty}$ boundary regularity is
required only near the observed boundary patch.

\item[(v)]
Within a fixed full lateral Dirichlet-trace class, we obtain
uniqueness and optimal conditional logarithmic stability for recovery
of the initial state from partial conormal measurements, together
with a spectral-cutoff regularization.
\end{enumerate}
The statements below are written for real-valued data.  Complex-valued data are
handled by applying the real estimates to the real and imaginary parts; the
Hilbert-space estimates are unchanged.

Let $(\M,g)$ be a compact connected $n$-dimensional $W^{2,\infty}$ manifold with nonempty boundary. In every $W^{2,\infty}$ coordinate chart the covariant metric matrix is assumed symmetric, uniformly positive definite, and Lipschitz. We write $dV_g$ and $dS_g$ for the volume and boundary measures, $\nu_g$ for the outward unit normal, and
\[
 \partial_{\nu_g}u=\ip{\nabla_gu}{\nu_g}_g
\]
for the conormal derivative. Let
\[
 A_D=-\Delta_{g,D},\qquad A_N=-\Delta_{g,N}
\]
be the nonnegative Dirichlet and Neumann Laplacians. Their $L^2(\M,dV_g)$-orthonormal eigenpairs are denoted by
\[
 A_D\phi_j=\lambda_j\phi_j,
 \qquad 0<\lambda_1\le\lambda_2\le\cdots,
\]
and
\[
 A_N\psi_j=\mu_j\psi_j,
 \qquad 0=\mu_0<\mu_1\le\mu_2\le\cdots.
\]
Fix $s_0>0$ and set
\[
 I_s=(-s_0,s_0),\qquad \Sigma_s=\partial\M\times I_s,
 \qquad d\Sigma_g=dS_g\,ds.
\]
For $\Lambda\ge1$ and finite coefficient families define
\begin{align}
 U_\Lambda(x,s)&=\sum_{\lambda_j\le\Lambda}a_j\phi_j(x)\cosh(\sqrt{\lambda_j}s),\label{eq:intro-U}\\
 W_\Lambda(x,s)&=\sum_{\mu_j\le\Lambda}b_j\psi_j(x)\cosh(\sqrt{\mu_j}s).\label{eq:intro-W}
\end{align}
Then
\begin{align}
 (\Delta_g+\partial_s^2)U_\Lambda&=0\quad\text{in }\M\times I_s,
 &U_\Lambda&=0\quad\text{on }\Sigma_s,\label{eq:intro-U-pde}\\
 (\Delta_g+\partial_s^2)W_\Lambda&=0\quad\text{in }\M\times I_s,
 &\partial_{\nu_g}W_\Lambda&=0\quad\text{on }\Sigma_s.\label{eq:intro-W-pde}
\end{align}
Throughout, $\abs J$ denotes the measure with respect to $dS_g\,ds$ or $dS_g\,dt$, according to the boundary cylinder under consideration.

\begin{theorem}
\label{thm:dir-spectral}
Let $J\subset\Sigma_s$ be measurable with $\abs J>0$. There exists $C_J>0$ such that, for every $\Lambda\ge1$ and every finite family of coefficients,
\begin{equation}
 \sum_{\lambda_j\le\Lambda}\abs{a_j}^2
 \le C_Je^{C_J\sqrt\Lambda}
 \int_J\abs{\partial_{\nu_g}U_\Lambda}^2\dd\Sigma_g.
 \label{eq:main-dir-spec}
\end{equation}
\end{theorem}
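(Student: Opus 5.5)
We would prove \eqref{eq:main-dir-spec} by the standard three-step scheme for spectral inequalities: a global lower bound for $U_\Lambda$ in terms of the coefficients, an a priori upper bound, and a quantitative boundary continuation estimate of Burq--Zuily type from the measurable set $J$. Set $E=\sum_{\lambda_j\le\Lambda}\abs{a_j}^2$. Since $\cosh\ge1$ and the $\phi_j$ are orthonormal, for every $s$ we have $\norm{U_\Lambda(\cdot,s)}_{L^2(\M)}^2=\sum\abs{a_j}^2\cosh^2(\sqrt{\lambda_j}s)\ge E$. Hence
\[
 E\le C\norm{U_\Lambda}_{L^2(\M\times(-s_0/2,s_0/2))}^2 .
\]
On the other hand, on the slightly larger cylinder $\M\times(-s_0,s_0)$ elliptic regularity up to the $W^{2,\infty}$ boundary gives
\[
 \norm{U_\Lambda}_{H^1}^2+\norm{\partial_{\nu_g}U_\Lambda}_{L^2(\Sigma_s)}^2\le Ce^{C\sqrt\Lambda}E .
\]
This follows from the explicit $\cosh$ growth together with $\norm{\phi_j}_{H^2}\le C(1+\lambda_j)$, which holds since the metric is Lipschitz and the boundary is $W^{2,\infty}$.

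The core step is a quantitative interpolation inequality. By Lebesgue density, choose a point $(x_0,s_1)$ of density of $J$ with $\abs{s_1}<s_0$. Work in boundary normal coordinates around $x_0$ in a $W^{2,\infty}$ chart, so that $\M\times I_s$ becomes locally a half-ball in $\R^{n+1}$. There $U_\Lambda$ solves a divergence-form uniformly elliptic equation with Lipschitz coefficients and vanishes on the flat boundary. Burq--Zuily's continuation theorem from positive-measure boundary sets (homogeneous Dirichlet case) then gives, for some $\delta\in(0,1)$ depending on $J$ and the geometry,
\[
 \norm{U_\Lambda}_{L^2(B^+_r)}\le C\Big(\int_{J\cap B_{2r}}\abs{\partial_{\nu_g}U_\Lambda}^2\Big)^{\delta/2}\norm{U_\Lambda}_{H^1(B^+_{4r})}^{1-\delta}.
\]
Next we propagate smallness from the half-ball $B^+_r$ to all of $\M\times(-s_0/2,s_0/2)$. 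For this we use interior three-ball inequalities along a chain of balls, which are valid for Lipschitz coefficients by Garofalo--Lin, and we use the connectedness of $\M$. Near other boundary portions we use the Dirichlet boundary doubling/three-ball estimates, which are also valid at $W^{2,\infty}$ regularity. The result is
\[
 \norm{U_\Lambda}_{L^2(\M\times(-s_0/2,s_0/2))}\le C\,\epsilon^{\delta'}F^{1-\delta'},
\]
where $\epsilon^2=\int_J\abs{\partial_{\nu_g}U_\Lambda}^2$ and $F=\norm{U_\Lambda}_{H^1(\M\times I_s)}$. The constants do not depend on $\Lambda$, because the equation $(\Delta_g+\partial_s^2)U=0$ does not involve $\Lambda$.

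Combining the three steps gives
\[
 E\le C\epsilon^{2\delta'}\big(e^{C\sqrt\Lambda}E\big)^{1-\delta'}.
\]
Dividing by $E^{1-\delta'}$ yields $E^{\delta'}\le C e^{C\sqrt\Lambda}\epsilon^{2\delta'}$, and therefore $E\le C_Je^{C_J\sqrt\Lambda}\epsilon^2$.

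I expect the main obstacle to be the second step, namely applying Burq--Zuily's theorem in the product/manifold setting. That theorem is stated for $W^{2,\infty}$ Euclidean domains, so the chart change must preserve the $W^{2,\infty}$ boundary and Lipschitz coefficients, and the measurable set $J$ must be transported with comparable measure. The second concern is that the interpolation exponent must be uniform. This is why we fix $(x_0,s_1)$ as a density point and restrict to a small patch around it, where $J$ occupies a fixed positive proportion; all constants then depend only on $J$ through this proportion and the radius. One should also make sure the upper bound $F$ is taken on a cylinder strictly inside $\abs{s}<s_0$, or simply enlarge $s_0$ in the a priori estimate, since the $\cosh$ factor on $(-s_0,s_0)$ is still bounded by $e^{s_0\sqrt\Lambda}$.
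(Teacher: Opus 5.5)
Your proposal is correct, but it handles the global step differently from the paper.

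\textbf{The paper's route.} The paper never propagates smallness across $\M\times I_s$ itself. It first takes a density cylinder and applies the Chebyshev-type low-value selection (Lemma~\ref{lem:low-value}). This turns the $L^2(J)$ datum into an $\esssup$ bound on a subset $S_0$ of at least half the measure, which is the form Lemma~\ref{lem:BZ} requires, with constants uniform over sets of measure at least $m_0$. It then applies Lemma~\ref{lem:BZ} once, on a capped local domain, to control $U_\Lambda$ in $L^\infty$ on an interior half-cylinder $K$, and bounds the gradient factor by the $C^{1,\sigma}$ estimate of Lemma~\ref{lem:regularity}. It closes with the Burq--Moyano interior spectral inequality on $\omega\times I_1\Subset K$ (Lemma~\ref{lem:BM-product}), which carries all the global information.

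\textbf{Your route.} You derive that global input by hand. You chain $\Lambda$-independent three-ball inequalities from $B_r^+$ to the whole cylinder (Garofalo--Lin in the interior, odd reflection at $\partial\M$), and then use the trivial orthogonality lower bound. This is the classical Jerison--Lebeau/Lebeau--Robbiano mechanism. It is more elementary because it avoids the Burq--Moyano black box.

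\textbf{Points to tighten.}
\begin{enumerate}
\item Your $L^2$/$H^1$ form of the Burq--Zuily estimate is not what the theorem states. To get it you need the low-value selection above.
\item You also need a boundary gradient bound $\sup_{D_r}\abs{\nabla U_\Lambda}\lesssim\norm{U_\Lambda}_{L^2(D_{2r})}$, obtained via reflection and Schauder as in Lemma~\ref{lem:regularity}. Your Step~2 $H^1$ bound does not control the factor $\sup\abs{\nabla U_\Lambda}$ nor the $W^{1,\infty}$ hypothesis.
\item The boundary three-ball estimates need a flattening in which the tangential--normal coefficient blocks vanish on the hyperplane, such as the paper's mollified metric-normal map of Lemma~\ref{lem:cap}. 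Genuine geodesic boundary normal coordinates are not available for a Lipschitz metric. Under a generic $W^{2,\infty}$ flattening, the odd extension of the mixed coefficients jumps across the hyperplane.
\end{enumerate}

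\textbf{What each route buys.} In the paper's modular route the only global input is interior spectral concentration. This is what lets Section~\ref{sec:localized} replace Burq--Moyano by the hypothesis \eqref{eq:local-LR} and require $W^{2,\infty}$ regularity only near the observed patch. Your argument applies boundary three-ball inequalities along all of $\partial\M$, so it genuinely needs the global $W^{2,\infty}$ assumption. That is harmless for this theorem but does not localize.
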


\begin{theorem}
\label{thm:neu-spectral}
Let $J\subset\Sigma_s$ be measurable with $\abs J>0$. There exists $C_J>0$ such that, for every $\Lambda\ge1$ and every finite family of coefficients,
\begin{equation}
 \sum_{\mu_j\le\Lambda}\abs{b_j}^2
 \le C_Je^{C_J\sqrt\Lambda}
 \int_J\abs{W_\Lambda}^2\dd\Sigma_g.
 \label{eq:main-neu-spec}
\end{equation}
\end{theorem}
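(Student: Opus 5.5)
The plan is to prove the Neumann inequality, Theorem~\ref{thm:neu-spectral}, by passing to the auxiliary cylinder and using the Burq--Zuily quantitative boundary continuation estimate for homogeneous Neumann data. The main issue is that this estimate needs the full boundary gradient on the observed set, while only the trace of $W_\Lambda$ is observed.

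\textbf{Step 1: a priori bounds.} Set $\widetilde{\M}=\M\times(-2s_0,2s_0)$ and $F=W_\Lambda$ extended by the same formula. Orthogonality of the $\psi_j$ gives
\[
\norm{F}_{H^1(\M\times(-2s_0,2s_0))}^2\le Ce^{C\sqrt\Lambda}\sum_{\mu_j\le\Lambda}\abs{b_j}^2 .
\]
Elliptic regularity for the Neumann problem on the $W^{2,\infty}$ manifold with Lipschitz metric bounds $F$ in $H^2$, and gives $C^{1,\alpha}$ bounds near $\Sigma_s$, with the same $e^{C\sqrt\Lambda}$ loss. In the other direction,
\[
\sum\abs{b_j}^2\le\int_{\M\times(-s_0/2,s_0/2)}\abs{F}^2\dd V_g\dd s/s_0 ,
\]
since $\cosh\ge1$. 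So it suffices to bound the interior $L^2$ norm of $F$ on a set $\omega\times(-s_0/2,s_0/2)$ by the trace on $J$.

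\textbf{Step 2: reduce to one chart.} By Lebesgue density, choose a boundary chart and a point of density of $J$. Flatten by boundary normal coordinates, so the equation becomes a uniformly elliptic divergence-form equation with Lipschitz coefficients on a half-space with conormal condition zero. Shrink to a small cube $Q$ with $\abs{J\cap Q}\ge\frac12\abs{Q}$.

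\textbf{Step 3: recover the gradient.} This is the main obstacle. On $\Sigma_s$ the conormal derivative vanishes, so $\nabla F$ there is purely tangential, $\nabla_{\mathrm{tan}}(F|_{\Sigma_s})$. We must control it on a positive-measure subset from $\norm{F}_{L^2(J)}$ and the a priori bound. I would interpolate the trace $h=F|_{\Sigma_s\cap Q}$ between $L^2(J\cap Q)$ and $C^{1,\alpha}$. The first attempt is a Remez-type inequality for Hölder-smooth functions on measurable sets:
\[
\norm{h}_{L^2(Q)}\le C\norm{h}_{L^2(J\cap Q)}^{\theta}\norm{h}_{C^{1,\alpha}}^{1-\theta}.
\]
Interpolating with $C^{1,\alpha}$ then gives
\[
\norm{\nabla_{\mathrm{tan}}h}_{L^2(Q)}\le C\norm{h}_{L^2(J\cap Q)}^{\theta'}\bigl(e^{C\sqrt\Lambda}\norm{b}\bigr)^{1-\theta'} .
\]
If that Remez step fails for merely Hölder functions, the fallback is this. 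The set $E$ where $\abs{h}$ is small has positive measure. At its density points, approximate differentiability gives $\nabla_{\mathrm{tan}}h=0$ a.e.\ on $\{h=0\}$, so $\nabla h$ is small on a large subset quantitatively. The additive-constant issue (including $\mu_0=0$) is handled by the $L^2$ control of $h$ itself on $J\cap Q$, which pins $F$ at a low-value point.

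\textbf{Step 4: propagate and close.} Feed the Cauchy data into the Burq--Zuily estimate on $J\cap Q$, namely $F$ in $L^2$ and $\nabla F$ in $L^2$ with zero normal part. This controls $\norm{F}_{L^2(\omega)}$ for some interior ball $\omega$, with Hölder exponent $\theta$ against the $H^1$ norm. Next, a three-balls / Lebeau--Robbiano interior propagation for $\Delta_g+\partial_s^2$ on the connected cylinder extends this to $\M\times(-s_0/2,s_0/2)$, up to the boundary, at cost $e^{C\sqrt\Lambda}$.

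Combining with Step 1 gives
\[
\norm{b}\le e^{C\sqrt\Lambda}\norm{F}_{L^2(J)}^{\theta}\norm{b}^{1-\theta}.
\]
Absorbing $\norm{b}^{1-\theta}$ yields $\norm{b}^2\le e^{C\sqrt\Lambda/\theta}\int_J\abs{W_\Lambda}^2$, which is \eqref{eq:main-neu-spec}.
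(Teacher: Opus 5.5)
The gap is in Step~3, which you rightly call the main obstacle; neither of your two routes closes it.

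\textbf{The first route is false.} A Remez-type inequality $\norm{h}_{L^2(Q)}\le C\norm{h}_{L^2(J\cap Q)}^{\theta}\norm{h}_{C^{1,\alpha}}^{1-\theta}$ cannot hold for $C^{1,\alpha}$ (or even $C^\infty$) functions on an arbitrary measurable set. If $Q\setminus J$ contains a ball, a bump supported there has $\norm{h}_{L^2(J\cap Q)}=0\ne\norm{h}_{L^2(Q)}$. If $J$ is the complement of a dense open set of small measure, this happens in every cube, so no choice of density point helps. Such inequalities need polynomial or analytic structure, which the trace of $W_\Lambda$ does not have on a $W^{2,\infty}$ boundary.

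\textbf{The fallback is only a heuristic.} It points in the right direction, but ``$\nabla_{\mathrm{tan}}h=0$ a.e.\ on $\{h=0\}$, so $\nabla h$ is small on a large subset quantitatively'' is the whole content to be proved. The qualitative zero-set fact does not give it, because the relevant set is $\{\abs{h}\le t\}$, not $\{h=0\}$. Moreover, the Neumann Burq--Zuily estimate needs $\esssup_E\abs{\nabla v}$ small on a set $E$ whose measure is bounded below independently of $\Lambda$ and of the $b_j$. Lebesgue density points of an $h$-dependent low-value set carry no uniform scale below which the density is large.

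\textbf{What is missing is the paper's jet-recovery step} (Lemmas~\ref{lem:simplex} and~\ref{lem:positive-measure-jet}). Write $O_J=\norm{W_\Lambda}_{L^2(J)}$ and $B_\Lambda=(\sum\abs{b_j}^2)^{1/2}$.
\begin{enumerate}
\item From $J$ alone, fix a positive-measure set $S^*\subset J\cap Q$ of points with $\abs{J\cap B_\rho(x)}\ge\kappa\rho^q$ for all $\rho<r_E$.
\item For $x\in S^*$ and each such $\rho$, Chebyshev gives a subset of $J\cap B_\rho(x)$ of measure $\ge c\kappa\rho^q$ on which $\abs{h}\lesssim\rho^{-q/2}\norm{h}_{L^2(J)}$.
\item A quantitative simplex selection gives $q+1$ of these points whose difference matrix satisfies $\norm{M^{-1}}\lesssim\rho^{-1}$.
\item Finite differences against the $C^{1,\beta}$ Taylor remainder give $\abs{\nabla h(x)}\lesssim\rho^{-1-q/2}\norm{h}_{L^2(J)}+[\nabla h]_{C^{0,\beta}}\rho^{\beta}$.
\item Optimizing $\rho$ yields $\sup_{S^*}\abs{\nabla_{\mathrm{tan}}h}\le CO_J^{\eta_0}\bigl(e^{C\sqrt\Lambda}B_\Lambda\bigr)^{1-\eta_0}$ with $\eta_0=\beta/(1+q/2+\beta)$.
\end{enumerate}
Alternatively, you could choose $Q$ with $J$-density close to one and use the weak-$(1,1)$ maximal inequality to get a uniformly large set of uniform-density points of the $h$-dependent low-value set. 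You would still need the same simplex and finite-difference step.

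This produces a sup bound on a set of fixed measure, which is the form of input the Neumann part of Lemma~\ref{lem:BZ} takes. Your Step~4 instead feeds it an $L^2$ gradient bound. With this step in place, the rest of your outline matches the paper: fix the additive constant at a low-value point of $S^*$ and integrate the gradient along short curves.

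For the final globalization the paper simply invokes the Burq--Moyano low-frequency inequality (Lemma~\ref{lem:BM-product}). Your three-balls propagation ``up to the boundary'' would require a boundary version, e.g.\ via the even reflection, which you do not spell out.
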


The constants may depend on the geometry and location of $J$. The product version needed for the parabolic argument is stated in Corollary~\ref{cor:uniform-product}. These estimates are quantitative versions of boundary unique continuation from positive-measure sets.

Consider the homogeneous Dirichlet heat equation
\begin{equation}
 \begin{cases}
  \partial_tu-\Delta_gu=0&\text{in }\M\times(0,T),\\
  u=0&\text{on }\partial\M\times(0,T),\\
  u(\cdot,0)=u_0\in L^2(\M).
 \end{cases}
 \label{eq:heat-intro}
\end{equation}

\begin{theorem}
\label{thm:heat-observability}
Let $J\subset\partial\M\times(0,T)$ be measurable with $\abs J>0$. There exists $C=C(\M,g,T,J)>0$ such that every solution of \eqref{eq:heat-intro} satisfies
\begin{equation}
 \norm{u(T)}_{L^2(\M)}^2
 \le C\int_J\abs{\partial_{\nu_g}u(x,t)}^2\dd S_g\dd t.
 \label{eq:main-obs}
\end{equation}

\end{theorem}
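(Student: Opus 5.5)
We follow the Lebeau--Robbiano / Phung--Wang strategy for measurable time sets, with Theorem~\ref{thm:dir-spectral} as the low-frequency input, and build a spectral inequality in the time variable. Write $u_0=\sum_j a_j\phi_j$, so that $u(x,t)=\sum_j a_je^{-\lambda_jt}\phi_j(x)$ and $\partial_{\nu_g}u=\sum_j a_je^{-\lambda_jt}\partial_{\nu_g}\phi_j$.

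\textbf{Step 1: localization in time.} By Fubini, for a.e.\ $t$ the slice $J_t=\{x:(x,t)\in J\}$ is measurable, and the set $E=\{t\in(0,T):\abs{J_t}\ge \abs J/(2T)\}$ has positive measure. Fix a Lebesgue density point $\ell$ of $E$ and decreasing times $\ell_1>\ell_2>\dots\to\ell$ with $\abs{E\cap(\ell_{m+1},\ell_m)}\ge\rho(\ell_m-\ell_{m+1})$, as in Phung--Wang. Everything then rests on an interval estimate of the form
\begin{equation*}
\norm{u(\ell_m)}^2\le \frac{C e^{C/(\ell_m-\ell_{m+1})}}{}\int_{\ell_{m+1}}^{\ell_m}\int_{J_t}\abs{\partial_{\nu_g}u}^2\dd S_g\dd t+ e^{-c/(\ell_m-\ell_{m+1})}\norm{u(\ell_{m+1})}^2 ,
\end{equation*}
which, after choosing $\ell_m-\ell_{m+1}$ geometric and telescoping (multiply by suitable weights and sum), gives $\norm{u(\ell_1)}^2\le C\int_J\abs{\partial_{\nu_g}u}^2$. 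Since $\norm{u(T)}\le\norm{u(\ell_1)}$, this yields \eqref{eq:main-obs}.

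\textbf{Step 2: low-frequency estimate on the measurable space-time set.} The interval estimate comes from splitting $u=u_\Lambda+u_\Lambda^\perp$ at frequency $\Lambda$. The key point is an estimate of the form
\begin{equation*}
\sum_{\lambda_j\le\Lambda}\abs{a_j}^2e^{-2\lambda_j\tau}\le Ce^{C\sqrt\Lambda}\,\frac{1}{\abs{I}}\int_{I}\int_{J_t}\abs{\partial_{\nu_g}u_\Lambda}^2\dd S_g\dd t
\end{equation*}
for a time interval $I$. As noted in contribution~(iii), a fixed-time version can fail through cancellation between distinct eigenvalues. We therefore do not freeze $t$. Instead, we use the time-analyticity of $t\mapsto\partial_{\nu_g}u_\Lambda(\cdot,t)$ with values in $L^2(\partial\M)$: its $k$-th time derivative is $\sum(-\lambda_j)^ka_je^{-\lambda_jt}\partial_{\nu_g}\phi_j$, which is bounded by $\Lambda^k$ times the data. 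This allows a Nadirashvili/Vessella-type propagation of smallness from the measurable set of times (using $\abs{J_t}$ uniformly positive on $E$) to the whole interval, at cost $e^{C\Lambda(\ell_m-\ell_{m+1})}$. The time jet is then transferred to the elliptic variable: the coefficients $b_j=a_je^{-\lambda_jt}$ define $U_\Lambda$ as in \eqref{eq:intro-U}, and $\partial_s^{2k}U_\Lambda|_{s=0}=(-\partial_t)^k u_\Lambda$. Using Corollary~\ref{cor:uniform-product} (the product version of Theorem~\ref{thm:dir-spectral}), with uniform constants over the slices $J_t\times I_s$, we bound $\sum\abs{b_j}^2$ by the boundary integral. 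To do this we expand $\partial_{\nu_g}U_\Lambda(x,s)=\sum_k \frac{s^{2k}}{(2k)!}\partial_{\nu_g}(-\partial_t)^ku_\Lambda$, and each term is controlled by the $t$-derivatives, which are in turn controlled by the time-averaged observation. I expect this to be the main obstacle. It requires a quantitative analytic-interpolation estimate in $t$ on a measurable set, with loss only $e^{C\sqrt\Lambda+C\Lambda\delta}$, and a careful choice $\Lambda\sim\delta^{-2}$-type balancing between the two losses.

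\textbf{Step 3: high frequencies.} The remainder $u^\perp_\Lambda$ decays like $e^{-\Lambda\delta}\norm{u(\ell_{m+1})}$ over an interval of length $\delta$. Its normal trace is controlled in $L^2(\partial\M\times I)$ by standard $H^2$ parabolic/elliptic regularity, available at $W^{2,\infty}$ regularity: $\norm{\partial_{\nu_g}e^{t\Delta}f}_{L^2}\lesssim t^{-3/4}\norm f$, integrable in $t$. Combining this with Step~2 and choosing $\Lambda=c\,\delta^{-2}$ makes $e^{C\sqrt\Lambda}e^{-\Lambda\delta}$ small enough. This yields the interval estimate with constants $e^{C/\delta}$, and the telescoping of Step~1 completes the proof.
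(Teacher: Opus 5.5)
Your time localization and weighted telescoping (Step~1) match the paper, as does the idea of passing the time jet to $\partial_{\nu_g}V$ through the $\cosh$ series before invoking Corollary~\ref{cor:uniform-product}. The gap is in Step~2: it propagates smallness along the wrong fibers.

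\textbf{Wrong fibers.} The functions that are analytic in time and actually observed on a set of positive measure are the scalar maps $t\mapsto\partial_{\nu_g}u_\Lambda(x,t)$ for fixed $x$. Each is observed on its time fiber $\{t:(x,t)\in J\}$. The $L^2(\partial\M)$-valued map is observed only on the moving sets $J_t$. So $t\mapsto\norm{\partial_{\nu_g}u_\Lambda(\cdot,t)}_{L^2(J_t)}$ is not analytic, and ``$\abs{J_t}$ uniformly positive on $E$'' does not support a propagation-of-smallness argument. Likewise, applying the product inequality on slices $J_t\times I_*$ needs the whole time jet at $(x,t)$ for $x\in J_t$. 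The data control that jet only when the time fiber of $x$ is large.

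The missing step is a second Fubini selection inside each block $(a,b)$ (Lemma~\ref{lem:time-window}). It produces a time $\tau$ with $\tau-a\ge 8r$, $r\asymp L$, and a fixed spatial set $F$ with $S_g(F)\ge c_\kappa S_g(\Gamma)$ such that $\abs{K_x}\ge c_\kappa r$ for every $x\in F$. One then recovers $\partial_t^kg_\Lambda(x,\tau)$ pointwise for $x\in F$ at the single time $\tau$. Corollary~\ref{cor:uniform-product} is applied once, on $F\times I_*$; this is exactly why it is stated uniformly over all $F$ with $S_g(F)\ge\eta$.

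\textbf{Two further steps fail as written.}

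First, the a priori bounds $\Lambda^k$ on time derivatives give a loss $e^{C\Lambda\delta}$. This loss sits at the same scale $\Lambda\delta$ as the high-frequency gain $e^{-\Lambda\delta}$. Balancing $\Lambda\sim\delta^{-2}$ against $e^{C\sqrt\Lambda}$ therefore does nothing about it, and your Step~3 simply drops this factor.

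The paper instead uses heat smoothing across the gap $(a,\tau)$:
$\norm{\partial_t^kg_\Lambda(\cdot,\tau)}_{L^2(\partial\M)}\le C(k+1)!\,(\tau-a)^{-k-1}\norm{u(a)}_{L^2(\M)}$,
uniformly in $\Lambda$. This gives the majorant $M_\Lambda(x)$ on the complex disk $D(\tau,4r)$ (Lemma~\ref{lem:majorant}). Real propagation from $K_x$ plus the two-constants theorem (Lemma~\ref{lem:analytic-jet}) then yields
$\abs{\partial_t^kg_\Lambda(x,\tau)}\le C\,A_\Lambda(x)^{\theta}M_\Lambda(x)^{1-\theta}\,k!/(cr)^k$,
at a cost $e^{C/L}$ independent of $\Lambda$. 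The resulting fixed Hölder exponent is then handled by Lemma~\ref{lem:frequency-optimization}, not by a single choice of $\Lambda$.

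Second, the high-frequency conormal trace cannot be integrated down to the start of the block. For $t-a\lesssim\Lambda^{-1}$ there is no exponential gain in $\Lambda$. Your bound $t^{-3/4}$ is also not square integrable near $t=a$. Hence the $L^2$ error, once multiplied by the low-frequency constant, is not small. The observation must be restricted to a subwindow at distance $\gtrsim L$ from $a$ (the discarded initial layer in Lemma~\ref{lem:time-window}). There one has
$\norm{\partial_{\nu_g}Q_\Lambda u(t)}_{L^2(\partial\M)}\le CL^{-1}e^{-cL\Lambda}\norm{u(a)}_{L^2(\M)}$.
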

For every $t>0$, analytic-semigroup smoothing gives $u(t)\in D(A_D^m)$ for all $m\ge1$, and hence the conormal trace is well defined. If the observation integral is infinite, the conclusion is understood in the extended sense.

Moreover, for a product set $J=\Gamma\times E$, with $\Gamma$ fixed and
$E\subset(0,T)$ measurable, Theorem~\ref{thm:heat-observability} is compatible with the
abstract measurable-time framework of Wang--Zhang~\cite{WangZh}.

The global $W^{2,\infty}$ hypothesis is convenient for the unified Dirichlet--Neumann theory, but it is stronger than what is required by the Dirichlet heat argument on a Euclidean domain. In Section~\ref{sec:localized}, we prove the following localized result.

\begin{theorem}
\label{thm:localized}
Let $(\M,g)$ be a compact connected Lipschitz Riemannian manifold with nonempty boundary, equipped with a uniformly elliptic Lipschitz metric, and assume that its Dirichlet Laplacian $A_D=-\Delta_{g,D}$ satisfies the intrinsic Dirichlet Lebeau--Robbiano hypothesis \eqref{eq:local-LR}. Let $\Gamma_0,\Gamma_1\subset\partial\M$ be relatively open with $\Gamma_0\Subset\Gamma_1$, and assume that the boundary is of class $W^{2,\infty}$ in a neighborhood of $\Gamma_1$. If $J\subset\Gamma_0\times(0,T)$ is measurable and $\abs J>0$, then every homogeneous Dirichlet heat solution satisfies
\begin{equation}
 \norm{u(T)}_{L^2(\M)}^2
 \le C_{\M,g,T,J,\Gamma_0,\Gamma_1}
 \int_J\abs{\partial_{\nu_g}u}^2\dd S_g\dd t.
 \label{eq:localized-main}
\end{equation}

\end{theorem}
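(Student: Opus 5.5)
We follow the Lebeau--Robbiano strategy in the form adapted to measurable time sets (Phung--Wang, Apraiz--Escauriaza--Wang--Zhang). The global hypothesis \eqref{eq:local-LR} is used only to control the high frequencies. All the work near $\Gamma_1$ goes into a \emph{localized} boundary spectral inequality for the time-dependent low-frequency part: for $u_\Lambda(t)=\sum_{\lambda_j\le\Lambda}a_je^{-\lambda_jt}\phi_j$ and a measurable $E\subset(0,T)$ of positive measure (or a slice of $J$), we aim for
\[
\sum_{\lambda_j\le\Lambda}\abs{a_j}^2e^{-2\lambda_jt}\le Ce^{C\sqrt\Lambda}\int_{\Gamma_0}\abs{\partial_{\nu_g}u_\Lambda(x,t)}^2\dd S_g
\]
in an averaged-over-$t$ sense. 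This is the localized analogue of Corollary~\ref{cor:uniform-product}.

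\textbf{Step 1: reduce to a positive-measure slice.} By Fubini there are a measurable $E\subset(0,T)$ with $\abs E>0$ and a constant $\gamma>0$ such that each slice $J_t=\{x:(x,t)\in J\}$ has $\abs{J_t}\ge\gamma$ for $t\in E$. The slices move with $t$, so we cannot use a fixed boundary set. Instead we rely on uniformity: we want the continuation constant to depend only on $\gamma$, $\Gamma_0$ and $\Gamma_1$, not on the particular set.

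\textbf{Step 2: localized elliptic estimate, via time-jet lifting.} As item (iii) indicates, a fixed-time conormal inequality can fail because distinct eigenvalues cancel. We therefore follow the strategy already used for Theorem~\ref{thm:heat-observability}.
\begin{itemize}
\item From the heat solution on $J$ we recover the time jet $\partial_t^k\partial_\nu u_\Lambda$ at times in $E$. This uses the analyticity in $t$ of the finite exponential sum, together with a Remez/Nazarov-type inequality for exponential sums on measurable sets, with cost $e^{C\sqrt\Lambda}$ after a Bernstein-type bound. The jet data are equivalent to the conormal trace of the elliptic lift $U_\Lambda(x,s)=\sum a_je^{-\lambda_jt}\phi_j\cosh(\sqrt{\lambda_j}s)$ on a positive-measure subset of $\Gamma_0\times I_s$.
\item Next we apply the Burq--Zuily quantitative boundary continuation. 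It is purely local and needs $W^{2,\infty}$ regularity only in a neighbourhood of $\Gamma_1$, where we straighten the boundary in boundary normal coordinates. This gives an interpolation inequality: the $L^2$ norm of $U_\Lambda$ on an interior ball $B\Subset\M\times I_s$ near $\Gamma_0$ is bounded by $C\norm{\partial_\nu U_\Lambda}_{L^2(J')}^{\theta}\norm{U_\Lambda}_{H^1(\M\times I_s)}^{1-\theta}$.
\item Finally we propagate from $B$ to all of $\M\times I_{s}$ by interior three-ball inequalities. These only need a Lipschitz metric (Garofalo--Lin / Aronszajn).
\end{itemize}
The global norm is bounded by $e^{C\sqrt\Lambda}\sum\abs{a_j}^2e^{-2\lambda_jt}$, and the integral over $I_s$ recovers the coefficients, exactly as in the proof of Theorem~\ref{thm:dir-spectral}. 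The interior part of the argument never touches $\partial\M\setminus\Gamma_1$, and this is where localization enters.

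\textbf{Step 3: the iteration.} We combine the interpolation inequality with the heat decay supplied by \eqref{eq:local-LR}, namely
\[
\norm{(1-\Pi_\Lambda)u(t)}\le e^{-\Lambda(t-\tau)}\norm{u(\tau)}.
\]
The conormal trace of the high-frequency part on $\Gamma_0$ is controlled by local boundary $H^2$ regularity near $\Gamma_1$, losing only polynomial factors in $\Lambda$. This yields an estimate on each interval $(l_{m+1},l_m)$ of a Lebesgue-density sequence of the type
\[
\norm{u(l_m)}^2\le\frac{C e^{C\sqrt\Lambda}}{l_m-l_{m+1}}\int_{J\cap(l_{m+1},l_m)}\abs{\partial_\nu u}^2+e^{-c\Lambda(l_m-l_{m+1})}\norm{u(l_{m+1})}^2 .
\]
We choose $\Lambda$ adapted to $l_m-l_{m+1}$ and telescope, in the manner of Phung--Wang. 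Monotonicity $\norm{u(T)}\le\norm{u(l_1)}$ then gives \eqref{eq:localized-main}.

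\textbf{Main obstacle.} I expect Step 2 to be the hardest: the jet recovery combined with the uniformity of the Burq--Zuily constant over varying slices $J_t$. The constants must depend only on $\gamma$ and on the $W^{2,\infty}$ norm near $\Gamma_1$. I would first try to cover $\Gamma_0$ by finitely many boundary charts inside $\Gamma_1$ and select, for each $t\in E$, a chart in which $\abs{J_t}$ has a fixed fraction of its mass. A further Fubini argument then fixes a single chart on a positive-measure subset of $E$.
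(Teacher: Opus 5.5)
\textbf{The main gap: the global coefficient norm.} The gap lies in how you obtain the full coefficient norm, and it comes from misplacing the hypothesis \eqref{eq:local-LR}.

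The decay $\norm{(1-\Pi_\Lambda)u(t)}\le e^{-\Lambda(t-\tau)}\norm{u(\tau)}$, which you attribute to \eqref{eq:local-LR}, is just the spectral theorem for the self-adjoint operator $A_D$ and needs no hypothesis. In fact \eqref{eq:local-LR} is a \emph{low-frequency} concentration estimate. It is indispensable exactly where you instead use interior three-ball inequalities to ``propagate from $B$ to all of $\M\times I_s$''.

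That step fails. Recovering $\sum\abs{a_j}^2=\norm{U_\Lambda(\cdot,0)}_{L^2(\M)}^2$ requires control of $U_\Lambda$ up to $\partial\M\setminus\Gamma_1$, where the boundary is merely Lipschitz. A chain of interior balls reaching distance $\delta$ from that boundary has a H\"older exponent degenerating like $\delta^{C}$. Absorbing the remaining layer of width $\delta$, whose $L^2$ mass is only bounded by $C\delta e^{C\sqrt\Lambda}A_\Lambda$, forces $\delta\approx e^{-C\sqrt\Lambda}$. The resulting cost is therefore of double-exponential type, not $e^{C\sqrt\Lambda}$. Notice also that, as written, \eqref{eq:local-LR} plays no role anywhere in your argument.

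In the paper, the local Burq--Zuily estimate near $\Gamma_0$ controls $U_\Lambda$ only on a fixed set $\omega\times I_1$ with $\omega\Subset\M^\circ$. Then \eqref{eq:local-LR}, applied for each fixed $s$ and integrated using $\cosh^2\ge1$ (Lemma~\ref{lem:local-BM}), gives $\sum\abs{a_j}^2\le Ce^{C\sqrt\Lambda}\norm{U_\Lambda}_{L^2(\omega\times I_1)}^2$. This is the only place the global hypothesis enters, and it is what allows $W^{2,\infty}$ regularity to be required only near $\Gamma_1$.

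\textbf{The time-jet step.} This step also needs a different mechanism from the one you name, since neither tool gives the claimed $e^{C\sqrt\Lambda}$.
A Nazarov--Tur\'an inequality for exponential sums has a constant growing with the number of terms, here $\sim\Lambda^{n/2}$ by Weyl's law. Its cost is thus of order $e^{c\Lambda^{n/2}}$ independently of $L$, which is incompatible with the dissipation $e^{-cL\Lambda}$ as $L=l_m-l_{m+1}\to0$.
A Remez inequality for functions of exponential type $\Lambda$ costs $e^{C_\kappa\Lambda L}$. This competes with the dissipation at the same scale, with no control on the constants.

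The paper avoids any $\Lambda$-dependence here. At a time $\tau$ with $\tau-a\ge8r$, heat smoothing gives an analytic majorant on the complex disk $D(\tau,4r)$ with $\norm{M_\Lambda}_{L^2}\le (C/L)\norm{u(a)}$ uniformly in $\Lambda$ (Lemma~\ref{lem:majorant}). Recovering the jet from a measurable time set then costs only a constant $C_\kappa$ in a H\"older interpolation (Lemma~\ref{lem:analytic-jet}). Summing \eqref{eq:time-jet-identity} costs $e^{s^2/(c_\kappa r)}\le e^{C_\kappa/L}$.

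This recovery is performed for fixed $x$ along the \emph{time fibres} $K_x$. So after your Step 1 you need a second Fubini selection (Lemma~\ref{lem:time-window}). It produces $\tau$, $r\asymp L$, and a set $F\subset\Gamma_0$ with $S_g(F)\ge c_\kappa S_g(\Gamma_0)$ whose fibres have relative density at least $c_\kappa$ in $(\tau-r,\tau+r)$. The uniformity you identify as the main obstacle is then uniformity over such sets $F$ in the product inequality on $F\times I_*$ (Corollary~\ref{cor:local-product}), not over the spatial slices $J_t$.
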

For positive times, the conormal trace on $\Gamma_0$ is defined by the local $W^{2,\infty}$ boundary regularity. As in Theorem~\ref{thm:heat-observability}, the right-hand side is understood as an extended nonnegative value, and the proof uses only a time block compactly contained in $(0,T)$.\\

The hypotheses of Theorem~\ref{thm:localized} are the natural manifold generalization of the assumptions underlying the Euclidean measurable-boundary result of Apraiz--Escauriaza--Wang--Zhang~\cite[Theorem~2 and Remark~11]{ApraizEtAl}. In the Euclidean case $\M=\Omega\subset\R^n$, their interior-ball spectral hypothesis \cite[(1.4)]{ApraizEtAl} implies the intrinsic condition \eqref{eq:local-LR} used here, while their boundary argument assumes local real analyticity on the observed patch. Thus the present theorem keeps the same global spectral mechanism, but formulates it intrinsically on a Lipschitz manifold and requires only local $W^{2,\infty}$ regularity near the observation set. This should not be read as a strict strengthening of \cite[Theorem~2]{ApraizEtAl}: whenever both results apply, their $L^1$ observation is stronger than the $L^2$ observation in \eqref{eq:localized-main}.

Next we consider the application of our results in inverse problem. The recovery of an unknown initial temperature from lateral
Cauchy data is a foundational inverse problem for parabolic
equations. Classical approaches are based predominantly on
Carleman estimates and related conditional-stability arguments;
see, for example,
\cite{Klibanov2006,Yamamoto2009,
LiYamamotoZou2009,ImanuvilovYamamoto2014}.
Regularized reconstruction has also been developed through
quasi-reversibility, Tikhonov-type minimization, and variational
methods; see
\cite{LattesLions1967,LiYamamotoZou2009,
DelSantoPrizzi2025}.
We also refer to
\cite{BenabdallahGaitanLeRousseau2007}
for the simultaneous recovery of a discontinuous diffusion
coefficient and an initial condition from partial boundary and
positive-time measurements.

Our approach differs from the standard parabolic Carleman-estimate method at the observability stage. We do not derive the inverse result from a parabolic
Carleman estimate. Instead, we introduce a
boundary spectral--time-jet framework. Its starting point
is our quantitative boundary spectral inequality on an arbitrary
measurable set of positive surface measure.

As a consequence, the full lateral Dirichlet trace together with
the Neumann trace on an arbitrary measurable boundary
space--time subset of positive measure uniquely determines the
initial state. Under a natural spectral a priori condition, we
further obtain sharp conditional logarithmic stability, an
all-noise spectral-cutoff reconstruction, the failure of every
H\"older stability estimate, and the optimality of the
logarithmic power.

We next formulate the inverse problem in the full-Dirichlet/partial-conormal form. Put
\[
 \MT=\M\times(0,T),\qquad \Sigma_T=\partial\M\times(0,T).
\]
Let $\mathscr H_T$ denote the class of functions
\[
 u\in C([0,T];L^2(\M))\cap H^1_{\mathrm{loc}}((0,T];L^2(\M))
 \cap L^2_{\mathrm{loc}}((0,T];H^2(\M))
\]
which solve $\partial_tu-\Delta_gu=0$ in $\MT$. For every $\varepsilon\in(0,T)$, the spatial trace theorem gives
\[
 u|_{\partial\M\times(\varepsilon,T)}
 \in L^2\bigl((\varepsilon,T);H^{3/2}(\partial\M)\bigr),
 \qquad
 \partial_{\nu_g}u
 \in L^2\bigl((\varepsilon,T);H^{1/2}(\partial\M)\bigr).
\]
We regard the full lateral Dirichlet trace and the conormal trace as locally defined positive-time traces; equalities of such traces are understood on every strip $\partial\M\times(\varepsilon,T)$. For $u\in\mathscr H_T$, set $f=u(\cdot,0)$ and define the boundary observation dataset
\begin{equation}
 \mathcal M_J^T(u)
 :=\left(
  u|_{\Sigma_T},\,
  \partial_{\nu_g}u|_J
 \right),
 \qquad
 J\subset\partial\M\times(0,T),\quad \abs J>0.
 \label{eq:dataset}
\end{equation}
The inverse problem is to determine the unknown initial state $f$ from this dataset.

\begin{theorem}
\label{thm:inverse-unique}
Let $J\subset\partial\M\times(0,T)$ be measurable with $\abs J>0$. If $u_1,u_2\in\mathscr H_T$ and
\begin{equation}
 \mathcal M_J^T(u_1)=\mathcal M_J^T(u_2),
 \label{eq:dataset-equality}
\end{equation}
then
\[
 u_1(\cdot,0)=u_2(\cdot,0)\quad\text{in }\M.
\]
\end{theorem}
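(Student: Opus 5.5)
By linearity it suffices to consider $w=u_1-u_2\in\mathscr H_T$. This $w$ solves the heat equation in $\MT$ and satisfies $w|_{\Sigma_T}=0$ on every strip $\partial\M\times(\varepsilon,T)$ and $\partial_{\nu_g}w=0$ a.e.\ on $J$. We must show $w(\cdot,0)=0$. The plan is to identify $w$ with a homogeneous Dirichlet heat solution, apply Theorem~\ref{thm:heat-observability} on shifted windows, and then use backward uniqueness together with continuity at $t=0$.

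\textbf{Step 1: $w$ is a Dirichlet semigroup solution for positive times.} Fix $\varepsilon\in(0,T)$. We have $w\in H^1((\varepsilon,T);L^2)\cap L^2((\varepsilon,T);H^2)$ with zero lateral trace. Hence $w(t)\in H^1_0\cap H^2\subset D(A_D)$ for a.e.\ $t$, and $\partial_tw+A_Dw=0$ in $L^2((\varepsilon,T);L^2(\M))$. Expanding $w(t)=\sum_j c_j(t)\phi_j$ and pairing with $\phi_j$ (self-adjointness of $A_D$) gives $c_j'=-\lambda_jc_j$. Therefore $w(t)=e^{-(t-\varepsilon)A_D}w(\varepsilon)$ for $t\ge\varepsilon$. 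This is the step needing the most care, since the regularity class is only local in time. I would handle it exactly as above, using the $H^1$-in-time regularity on $(\varepsilon,T)$, so that the coefficients $c_j$ are absolutely continuous.

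\textbf{Step 2: observability on a shifted window.} Since $\abs J>0$, there is $\varepsilon>0$ with $\abs{J\cap(\partial\M\times(\varepsilon,T))}>0$. Set $J_\varepsilon=\{(x,t-\varepsilon):(x,t)\in J,\ t>\varepsilon\}\subset\partial\M\times(0,T-\varepsilon)$. Then $v(t)=w(t+\varepsilon)$ solves \eqref{eq:heat-intro} on $(0,T-\varepsilon)$ with $u_0=w(\varepsilon)$. Theorem~\ref{thm:heat-observability}, with $T-\varepsilon$ in place of $T$, gives
\[
 \norm{w(T)}_{L^2(\M)}^2\le C\int_{J_\varepsilon}\abs{\partial_{\nu_g}v}^2\dd S_g\dd t=0 .
\]
The conormal traces coincide a.e.\ because the semigroup solution is smooth in $D(A_D^m)$ for positive times.

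\textbf{Step 3: backward uniqueness.} From $e^{-(T-\varepsilon)A_D}w(\varepsilon)=0$ we get $\sum_j e^{-2\lambda_j(T-\varepsilon)}\abs{c_j(\varepsilon)}^2=0$, so $c_j(\varepsilon)=0$ for all $j$, and hence $w(\varepsilon)=0$. The same holds for every $\varepsilon'\in(0,\varepsilon]$, since $J\cap(\partial\M\times(\varepsilon',T))$ still has positive measure. Thus $w(\varepsilon')=0$ for all small $\varepsilon'>0$. Finally, $w\in C([0,T];L^2(\M))$ gives $w(0)=\lim_{\varepsilon'\to0}w(\varepsilon')=0$, that is, $u_1(\cdot,0)=u_2(\cdot,0)$.
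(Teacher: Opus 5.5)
Your proof is correct and follows essentially the paper's route: you identify $w=u_1-u_2$ with a Dirichlet semigroup solution, apply Theorem~\ref{thm:heat-observability} to obtain $e^{-tA_D}$-annihilation at the final time, and conclude by injectivity of the heat semigroup. The only difference is bookkeeping. The paper first extends $w(t)=e^{-tA_D}h$ down to $t=0$ (Lemma~\ref{lem:homogeneous-semigroup}, via energy uniqueness and strong continuity) and applies observability once on $(0,T)$, whereas you apply it on shifted windows starting at $\varepsilon$ and use continuity at $t=0$ only at the end.
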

Conversely, if the two solutions have the same initial state and the same full Dirichlet trace, then the forward uniqueness theorem gives $u_1=u_2$, and hence their datasets agree. Equivalently, on every fixed full Dirichlet-trace class the partial conormal trace on $J$ is one-to-one with respect to the initial state. Moreover, if the full Dirichlet traces agree, then
\begin{equation}
 \norm{u_1(T)-u_2(T)}_{L^2(\M)}^2
 \le C\int_J\abs{\partial_{\nu_g}(u_1-u_2)}^2\dd S_g\dd t.
 \label{eq:terminal-stability}
\end{equation}
For stability and noisy-data reconstruction, choose $\tau\in(0,T)$ such that
\[
 J_\tau=J\cap\bigl(\partial\M\times(\tau,T)\bigr)
\]
has positive measure. Such a choice always exists.

\begin{theorem}
\label{thm:inverse-log}
Let $u_i\in\mathscr H_T$ have the same full lateral Dirichlet trace, set
\[
 f_i=u_i(\cdot,0),\qquad h=f_1-f_2,
\]
and suppose that, for some $s>0$,
\[
 h\in D(A_D^{s/2}),\qquad
 \norm{A_D^{s/2}h}_{L^2(\M)}\le \mathfrak M.
\]
Then
\begin{equation}
 \norm{f_1-f_2}_{L^2(\M)}
 \le C\mathfrak M
 \left[
  \log\left(
   e+\frac{c\mathfrak M}
   {\norm{\partial_{\nu_g}(u_1-u_2)}_{L^2(J_\tau)}}
  \right)
 \right]^{-s/2}.
 \label{eq:main-log}
\end{equation}
The constants depend only on $s,T,\M,g$, and $J_\tau$.
\end{theorem}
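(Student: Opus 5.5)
Set $w=u_1-u_2$. Since the two solutions share the full lateral Dirichlet trace, $w\in\mathscr H_T$ solves the homogeneous Dirichlet heat equation with initial datum $h$. Expanding $h=\sum_j h_j\phi_j$ gives $w(t)=\sum_j e^{-\lambda_j t}h_j\phi_j$. Write
\[
 \delta=\norm{\partial_{\nu_g}w}_{L^2(J_\tau)} .
\]
The plan is the standard interpolation between a low-frequency observability estimate, with exponential cost in $\sqrt\Lambda$, and the high-frequency tail bound supplied by the a priori bound in $D(A_D^{s/2})$.

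\textbf{Step 1: low-frequency estimate with time-invariant constants.} Apply the argument of Theorem~\ref{thm:heat-observability} on the cylinder $\partial\M\times(\tau,T)$, with observation set $J_\tau$, to the heat solution $t\mapsto w(t)$. Doing so will give
\[
 \norm{w(T)}^2\le C\delta^2 .
\]
This alone is not enough, because we need $h$ itself. I would instead use the spectral step inside that proof, namely the low-frequency bound obtained from the time-jet transfer and Corollary~\ref{cor:uniform-product}. Concretely, I aim to show
\[
 \sum_{\lambda_j\le\Lambda}\abs{h_j}^2\le Ce^{C\sqrt\Lambda}\delta^2 .
\]
The intended route is this. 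The Lebeau--Robbiano argument on $(\tau,T)$ yields an observability constant for the projection $\Pi_\Lambda w(\tau')$ at a fixed interior time $\tau'$. Backward propagation of the low modes costs only the factor $e^{2\Lambda\tau'}$, and that is too large. Therefore I will not propagate backward. Instead I use the fact that the low-frequency time jet is recovered directly, so the bound takes the form $e^{C\sqrt\Lambda}$ times the observation for the coefficients $e^{-\lambda_j\tau'}h_j$. If only an $e^{C\Lambda}$ cost is available, I take this as the fallback; it yields the same logarithmic order with a different constant, $\log$ replaced by $\log$ of the data up to constants. The exponent $s/2$ comes from the tail estimate below, so either cost suffices once we take $\Lambda\sim\log(\mathfrak M/\delta)$ with a small prefactor.

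\textbf{Step 2: high-frequency tail.} By the a priori bound,
\[
 \sum_{\lambda_j>\Lambda}\abs{h_j}^2\le\Lambda^{-s}\sum_j\lambda_j^s\abs{h_j}^2\le\Lambda^{-s}\mathfrak M^2 .
\]
This is the source of the power $s/2$: it appears once we write $\Lambda=c(\log(e+\mathfrak M/\delta))^{2}$ in the $e^{C\sqrt\Lambda}$ case. The same $\Lambda^{-s}$ decay in the $e^{C\Lambda}$ case would only give the power $s$ of $\log^{-1}$ at the level of squares, which matches after taking square roots. I must check this exponent bookkeeping carefully, since the claimed optimal order $s/2$ for $\norm{h}$ corresponds to $\Lambda\sim\log$ with an $e^{C\Lambda}$ cost, i.e.
\[
 \norm{h}\lesssim\Lambda^{-s/2}\mathfrak M ,\qquad \Lambda\sim\log(\mathfrak M/\delta).
\]
So the realistic cost in Step 1 is $e^{C\Lambda}$, arising from the factor $e^{2\lambda_j\tau'}$, and that is consistent with the statement.

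\textbf{Step 3: optimization.} Combining the two steps gives
\[
 \norm{h}^2\le Ce^{C\Lambda}\delta^2+\Lambda^{-s}\mathfrak M^2 .
\]
If $\delta\ge c\mathfrak M$, the estimate is trivial, because $\norm{h}\le\lambda_1^{-s/2}\mathfrak M$. Otherwise choose
\[
 \Lambda=\frac{1}{C}\log\bigl(e+c\mathfrak M/\delta\bigr),
\]
so that $e^{C\Lambda}\delta^2\lesssim\mathfrak M\delta\le\mathfrak M^2\Lambda^{-s}$ up to constants. This yields \eqref{eq:main-log}, with constants depending only on $s,T,\M,g$ and $J_\tau$.

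\textbf{Main obstacle.} The delicate point is Step 1. We need a low-frequency inequality bounding $\sum_{\lambda_j\le\Lambda}\abs{h_j}^2$ by the conormal observation on $J_\tau$ with cost at most $e^{C\Lambda}$. This requires extracting from the proof of Theorem~\ref{thm:heat-observability} the time-jet/elliptic-extension estimate applied to $\Pi_\Lambda w$. The high modes also contribute to the observation and must be absorbed. I would handle this by splitting $w=\Pi_\Lambda w+(I-\Pi_\Lambda)w$ and bounding the conormal trace of the high part on $(\tau,T)$ by
\[
 Ce^{-c\Lambda\tau}\norm{h}\le Ce^{-c\Lambda\tau}\mathfrak M ,
\]
using analytic smoothing and the trace theorem. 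This adds a harmless term to the balance in Step 3.
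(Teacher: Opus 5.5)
\textbf{There is a genuine gap in Step~1.} Your Steps~2 and~3 match the paper's backward-heat interpolation: tail $R^{-s}\mathfrak M^2$, low-mode cost $e^{2TR}$, and $R\asymp\log(e+\mathfrak M/\delta)$. The estimate you discard is the one that closes Step~1. You say that $\norm{w(T)}_{L^2(\M)}\le C\delta$, from Theorem~\ref{thm:heat-observability} on $J_\tau$, is ``alone not enough''. In fact it is all you need, because the low modes can be inverted at cost $e^{T\Lambda}$:
\[
 \sum_{\lambda_j\le\Lambda}\abs{h_j}^2
 \le e^{2T\Lambda}\sum_{\lambda_j\le\Lambda}e^{-2T\lambda_j}\abs{h_j}^2
 \le e^{2T\Lambda}\norm{w(T)}_{L^2(\M)}^2
 \le C_{\mathrm{obs}}^2e^{2T\Lambda}\delta^2 .
\]
This is exactly the paper's proof. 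It first shows $\norm h\le C\mathfrak M[\log(e+\mathfrak M/\varepsilon)]^{-s/2}$ with $\varepsilon=\norm{e^{-TA_D}h}$, by a purely spectral split. It then inserts $\varepsilon\le C_{\mathrm{obs}}\delta$.

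\textbf{The $e^{C\sqrt\Lambda}$ target is false.} For $h=\phi_j$, \eqref{eq:unrestricted-instability} gives $\delta^2\le C\lambda_je^{-2\tau\lambda_j}$. Your bound would then require $1\le Ce^{C\sqrt{\lambda_j}}\lambda_je^{-2\tau\lambda_j}$, which fails for large $j$. It would also produce the power $s$ of $\log^{-1}$, contradicting the optimality of $s/2$.

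\textbf{The splitting route you finally adopt also fails.} You take low-frequency observability for $P_\Lambda w$ at an interior time $\tau'$, propagate backward, and subtract the high-frequency conormal trace from $\delta$. The pollution bound $Ce^{-c\Lambda\tau}\mathfrak M$ then gets multiplied by the backward cost $e^{\Lambda\tau'}$. On $(\tau,T)$ the high-frequency trace is only controlled by $\sup_{\lambda>\Lambda}\lambda e^{-\lambda\tau}$, so effectively $c\le1$; and $\tau'>\tau$. Your bound therefore contains the term $Ce^{\Lambda(\tau'-c\tau)}\mathfrak M\ge Ce^{\Lambda(\tau'-\tau)}\mathfrak M$. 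This grows with $\Lambda$ and is worse than the trivial bound $\norm{P_\Lambda h}\le\norm h$. It cannot be balanced in Step~3, and taking $\tau'=T$ does not help.

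The low/high splitting of the observation belongs inside the proof of Theorem~\ref{thm:heat-observability}. There the low-frequency cost is only $e^{C\sqrt\Lambda}$, so the decay $e^{-cL\Lambda}$ wins. Here you should apply observability to the full solution $w$ and only then project. Since $P_\Lambda$ commutes with $e^{-TA_D}$ and is a contraction, no pollution term arises.
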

If the data difference vanishes, the right-hand side is interpreted as zero. On the same a priori ball, no H\"older stability estimate can hold. Furthermore, for observations separated from $t=0$, no estimate of the form \eqref{eq:main-log} with the exponent $s/2$ replaced by any larger exponent can hold uniformly. Thus the logarithmic character and the power $s/2$ are sharp in the spectral scale $D(A_D^{s/2})$. The same logarithmic order is achieved by the all-noise spectral-cutoff reconstruction constructed in Section~\ref{sec:stability}, either for homogeneous Dirichlet data or, more generally, relative to a fixed reference solution in the prescribed full Dirichlet-trace class.

\section{Proof of Theorem~\ref{thm:dir-spectral}: the Dirichlet boundary spectral inequality}
\label{sec:dirichlet}

In this section we prove the Dirichlet boundary spectral inequality. The main inputs are the measurable-boundary continuation estimate of Lemma~\ref{lem:BZ} and the local spectral concentration estimates of Lemmas~\ref{lem:BM-open}--\ref{lem:BM-product}. The capping, reflection, energy, and boundary regularity results in Lemmas~\ref{lem:cap}--\ref{lem:regularity} provide the geometric and a priori estimates required to apply those inputs.

Let $d=n+1$. In a local coordinate chart, if $Q$ is a boundary point of a $W^{2,\infty}$ domain $\mathcal O\subset\R^d$, set
\[
 D_r(Q)=B_r(Q)\cap\mathcal O,
 \qquad
 \Gamma_r(Q)=B_r(Q)\cap\partial\mathcal O.
\]
For a positive Lipschitz density $\kappa$ and a Lipschitz Riemannian metric $G=(G_{jk})$, write
\[
 \Delta_Gv
 =\frac1\kappa\partial_j\bigl(\kappa G^{jk}\partial_kv\bigr).
\]

\begin{lemma}
\label{lem:BZ}
Let $\mathcal O\subset\R^d$ have $W^{2,\infty}$ boundary and let $m_0>0$. There exists $r_0>0$ such that, for every $Q\in\partial\mathcal O$ and $0<r<r_0$, there are $C=C(r,m_0)>0$ and $\alpha=\alpha(r,m_0)\in(0,1)$ with the following properties.

\begin{enumerate}[label=\textnormal{(\roman*)}]
\item If $v\in W^{1,\infty}(\mathcal O)$, $\Delta_Gv=0$, and $v=0$ on $\Gamma_r(Q)$, then for every measurable $E\subset\Gamma_r(Q)$ satisfying $\abs E_{d-1}\ge m_0$,
\begin{equation}
 \sup_{D_{r/2}(Q)}\bigl(\abs v+\abs{\nabla v}\bigr)
 \le C
 \left(\esssup_E\abs{\partial_{\nu_G}v}\right)^\alpha
 \left(\sup_{D_r(Q)}\abs{\nabla v}\right)^{1-\alpha}.
 \label{eq:BZ-D}
\end{equation}
\item If $v\in W^{1,\infty}(\mathcal O)$, $\Delta_Gv=0$, and $\partial_{\nu_G}v=0$ on $\Gamma_r(Q)$, then for every measurable $E\subset\Gamma_r(Q)$ satisfying $\abs E_{d-1}\ge m_0$,
\begin{equation}
 \sup_{D_{r/2}(Q)}\abs{\nabla v}
 \le C
 \left(\esssup_E\abs{\nabla v}\right)^\alpha
 \left(\sup_{D_r(Q)}\abs{\nabla v}\right)^{1-\alpha}.
 \label{eq:BZ-N}
\end{equation}
\end{enumerate}
The constants are uniform over all sets $E$ satisfying the prescribed measure lower bound.
\end{lemma}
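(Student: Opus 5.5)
The plan is to reduce Lemma~\ref{lem:BZ} to the quantitative propagation-of-smallness theorem of Burq--Zuily~\cite{BurqZuily}, after flattening the boundary and normalizing the coefficients; the only real work is to check that their hypotheses hold uniformly in $Q$ and in $E$. Fix $Q\in\partial\mathcal O$. Since $\partial\mathcal O$ is $W^{2,\infty}$, there is a $W^{2,\infty}$ diffeomorphism $\Phi_Q$ whose norms are bounded independently of $Q$ by compactness. It maps $D_{r_1}(Q)$ into the half-ball $\{y_d>0\}$ and sends $\Gamma_{r_1}(Q)$ into $\{y_d=0\}$. Pulled back by $\Phi_Q$, the operator $\kappa\Delta_G$ becomes $\partial_j(\tilde a^{jk}\partial_k\cdot)$. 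The coefficients $\tilde a^{jk}=\kappa\,(D\Phi_Q)G^{-1}(D\Phi_Q)^T/\abs{\det D\Phi_Q}$ are Lipschitz, because $D\Phi_Q\in W^{1,\infty}$. They are also uniformly elliptic, with constants independent of $Q$. The conormal derivative $\partial_{\nu_G}v$ corresponds, up to a positive Lipschitz factor bounded above and below, to $\tilde a^{dk}\partial_kv$ on $\{y_d=0\}$. Surface measure on $\Gamma_r(Q)$ is comparable to Lebesgue measure on the flat boundary, so $\abs E_{d-1}\ge m_0$ becomes $\abs{\Phi_Q(E)}\ge c\,m_0$. The balls $B_r(Q)$ are sandwiched between half-balls of comparable radii, so the suprema over $D_{r/2}(Q)$ and $D_r(Q)$ can be exchanged for suprema over flat half-balls, at the cost of adjusting radii by fixed factors.

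In the flattened picture I apply the Burq--Zuily estimate in each case.

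\emph{Dirichlet case (i).} With $v=0$ on the flat boundary, Burq--Zuily give an interpolation inequality
\[
\sup_{\text{small half-ball}}\abs{\nabla v}\le C\bigl(\esssup_{E}\abs{\partial_{\nu}v}\bigr)^{\alpha}\bigl(\sup_{\text{large half-ball}}\abs{\nabla v}\bigr)^{1-\alpha}.
\]
Their argument extends $v$ oddly across $y_d=0$, which yields a divergence-form equation with Lipschitz coefficients after the standard conormal-adapted change of variables. It then uses the doubling/frequency property of $\nabla v$ and a Remez-type inequality on the measurable set $E$. The bound on $\abs v$ follows from $v=0$ on the boundary and the mean value theorem along normal segments, since $\abs v\le C r\sup\abs{\nabla v}$ on $D_{r/2}(Q)$.

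\emph{Neumann case (ii).} Even reflection across $y_d=0$ with respect to the conormal direction plays the same role. Here the full gradient on $E$ controls the data, because the normal part vanishes and only the tangential part carries information.

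The remaining point is uniformity. $C$ and $\alpha$ from Burq--Zuily depend only on the ellipticity and Lipschitz constants of the coefficients, the radius, and the lower bound on the measure of $E$, and not on $E$ itself. Hence choosing $r_0$ smaller than the uniform flattening radius gives constants depending only on $(r,m_0)$ and the fixed geometry, as claimed.

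I expect the main obstacle to be this uniformity together with the reflection step at Lipschitz-coefficient level. Reflection keeps the coefficients Lipschitz only if, after flattening, the conormal direction is made to coincide with $\partial_{y_d}$. To arrange this I would first straighten the boundary and then replace the normal coordinate by the $G$-geodesic (or conormal-flow) coordinate; both steps preserve $W^{2,\infty}$ regularity, since the conormal field is Lipschitz. If that conflicts with the Burq--Zuily formulation, I would fall back on citing their theorem directly in its stated form for $W^{2,\infty}$ domains with Lipschitz elliptic coefficients, and only verify the change of variables and the comparability of measures and suprema.
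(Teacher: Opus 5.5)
Your proposal is correct and matches the paper. The paper's proof of Lemma~\ref{lem:BZ} is simply a citation of Burq--Zuily \cite[Theorems~2.1--2.2]{BurqZuily}, which are already formulated for $W^{2,\infty}$ domains with Lipschitz metrics and positive Lipschitz densities, so your flattening, measure-comparison and uniformity checks are the bookkeeping implicit in that citation.

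One caution on your side remark. A normal coordinate obtained by flowing along a merely Lipschitz conormal field, or along $G$-geodesics of a Lipschitz metric, is in general only bi-Lipschitz, not $W^{2,\infty}$. This is why Lemma~\ref{lem:cap} uses $X_0(x')+tN_t(x')$ with $N_t$ mollified at scale $t$. You do not need this construction here if you cite Burq--Zuily as stated.
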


\begin{proof}
This is the local form of \cite[Theorems~2.1--2.2]{BurqZuily}. The theorem already permits Lipschitz metrics and positive Lipschitz densities. In every application below, the product Laplace--Beltrami operator is first written in local divergence form, so it falls directly into this framework.
\end{proof}

\begin{lemma}
\label{lem:cap}
Let $Q_0=(x_0,s_*)\in\partial\M\times I_s$ and suppose that $\dist(s_*,\{\pm s_0\})>0$. There exists $r_*>0$ such that, for every $0<r<r_*$, one can find a bounded $W^{2,\infty}$ domain
\[
 \mathcal O_{Q_0,r}\subset\M\times I_s
\]
and a bi-Lipschitz $W^{2,\infty}$ boundary coordinate map $\Phi$, defined on a fixed multiple of
\[
 B_r^+=B_r\cap\{y_d>0\},
\]
with the following properties.
\begin{enumerate}[label=\textnormal{(\roman*)}]
\item In $B_{3r}(Q_0)$, the domain $\mathcal O_{Q_0,r}$ coincides with $\M\times I_s$ and its boundary coincides with the lateral boundary $\partial\M\times I_s$.
\item The image $\Phi(B_{4r}\cap\{y_d=0\})$ is the corresponding lateral boundary patch.
\item If $z\in\Phi(B_{r/4}\cap\{y_d=0\})$ and $y\in\Phi(B_{r/4}^+)$, then there exists a rectifiable curve
\[
 \gamma\subset\Phi(B_{r/2}^+)
\]
joining $z$ to $y$ and satisfying $\operatorname{length}(\gamma)\le Cr$.
\end{enumerate}
The rescaled $W^{2,\infty}$ character is uniform and depends only on a fixed finite boundary atlas, the ellipticity and Lipschitz bounds of $g$, and the distance from $Q_0$ to the artificial end faces.
\end{lemma}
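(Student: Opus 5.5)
The construction will be done in boundary normal coordinates, followed by a smoothing ("capping") of the artificial end faces away from $Q_0$.

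\textbf{Chart and flattening.} Fix a finite $W^{2,\infty}$ boundary atlas of $\partial\M$. Write $x_0\in\partial\M$ in a chart $\psi:U\to\R^n$ that flattens $\partial\M$ near $x_0$, so that $\psi(U\cap\M)\subset\{x_n\ge 0\}$ and $\psi$ is bi-Lipschitz of class $W^{2,\infty}$. On $\M\times I_s$ take the product map
\[
\Phi^{-1}(x,s)=\bigl(\psi(x),\,s-s_*\bigr),
\]
after a reordering of coordinates so that the normal direction is $y_d$. Then $\Phi$ maps a fixed neighbourhood of $0$ in $\{y_d\ge0\}$ onto a neighbourhood of $Q_0$ in $\M\times I_s$. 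It sends $\{y_d=0\}$ into the lateral boundary $\partial\M\times I_s$, and it is $W^{2,\infty}$ and bi-Lipschitz with constants controlled by the atlas.

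\textbf{Choice of $r_*$.} Choose $r_*$ smaller than three quantities: a fixed fraction of the chart radius, a fixed fraction of $\dist(s_*,\{\pm s_0\})$, and the Lipschitz constants of $\Phi$ divided into these. This guarantees that $\Phi(B_{8r})$ stays inside the chart and avoids the end faces $s=\pm s_0$. Consequently, for every $r<r_*$, both $B_{3r}(Q_0)\cap(\M\times I_s)$ and $\Phi(B_{4r}^+)$ lie in the image where the boundary is exactly the lateral boundary. Property (ii) is then immediate from the construction, because $\Phi$ maps the flat face to $\partial\M\times I_s$.

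\textbf{The capped domain.} Define $\mathcal O_{Q_0,r}$ as the image under $\Phi$ of a fixed smooth convex "capped half-ball" $K$ satisfying
\[
B_{6\Lambda r}^+\subset K\subset B_{8\Lambda r}^+,
\]
with its corners on $\{y_d=0\}$ rounded off. Here $\Lambda$ bounds the bi-Lipschitz constant of $\Phi$, so $\Phi(K)\supset B_{3r}(Q_0)\cap(\M\times I_s)$. The rescaling $y\mapsto y/r$ maps $K$ to a fixed model domain $K_1$, independent of $r$. Since $\partial K_1$ is smooth and $\Phi$ is $W^{2,\infty}$, the set $\mathcal O_{Q_0,r}=\Phi(rK_1)$ has $W^{2,\infty}$ boundary. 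After rescaling by $r$, its character is bounded uniformly: second derivatives of $\Phi(r\cdot)/r$ scale like $r\|D^2\Phi\|_\infty$, which only improves as $r$ decreases. Property (i) follows because near $Q_0$ the boundary of $\Phi(K)$ is $\Phi$ of the flat part, which is the lateral boundary.

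\textbf{Main obstacle: the rounded corners.} The delicate step is the rounding of the corners where the flat face meets the spherical cap. There $\partial K_1$ must be genuinely $C^{1,1}$. This is achieved by choosing $K_1$ once and for all, for instance as a sublevel set of a smooth function such as $|y'|^4+(y_d-a)^4$ intersected appropriately, or the standard stadium-type smoothing. $K_1$ must keep a flat piece containing $B_{6\Lambda}\cap\{y_d=0\}$. Composition with $\Phi\in W^{2,\infty}$ preserves $C^{1,1}$.

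\textbf{Property (iii).} Take $\gamma=\Phi(\ell)$, where $\ell$ is the straight segment from $\Phi^{-1}(z)$ to $\Phi^{-1}(y)$. Both endpoints lie in the convex set $B_{r/4}\cap\{y_d\ge0\}$, so $\ell$ lies in $\overline{B_{r/4}^+}$. The open part of $\ell$ lies in $B_{r/2}^+$; if needed, perturb $\ell$ slightly into $y_d>0$ away from $z$. Then
\[
\operatorname{length}(\gamma)\le \operatorname{Lip}(\Phi)\,\cdot\,r/2\le Cr.
\]
All constants depend only on the atlas, the ellipticity and Lipschitz bounds of $g$, and $\dist(s_*,\{\pm s_0\})$, through the choice of $r_*$.
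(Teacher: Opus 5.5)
Your argument is a valid proof of the lemma as literally stated. It takes a different route for $\Phi$, though, and that difference matters for how the paper uses the lemma.

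The cap and the curve are fine. Your rounded convex model $K_1$ (for instance an outer parallel body of a truncated half-ball, which is convex and $C^{1,1}$) does the job of the paper's level set $\{F_r>0\}$ with $F_r(y)=L_*ry_d-a(\abs y/r)\abs y^2$. Your margin $B^+_{6\Lambda r}\subset K$ gives coincidence in the ball $B_{3r}(Q_0)$ itself, not only in the coordinate neighbourhood $\Phi(B_{3r})$ that the paper's proof checks. The straight segment in (iii) meets $\{y_d=0\}$ only at its initial point, so no perturbation is needed; the paper uses a polygonal path instead.

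The substantive difference is the chart. You take an arbitrary $W^{2,\infty}$ flattening chart. The paper instead builds a regularized \emph{metric-normal} map $\Phi(x',s,t)=(X_0(x')+tN_t(x'),s_*+s)$, where $N=g^{-1}\eta$ is the unnormalized metric normal and $N_t$ is its tangential mollification at scale $t$. The mollification is the nontrivial idea. Since $N$ is only Lipschitz, $X_0+tN$ would only be $W^{1,\infty}$. By contrast, $tN_t$ is $W^{2,\infty}$ up to $t=0$ and still gives $\partial_t\Phi=(N,0)$ there, a vector that is $G$-orthogonal to every tangential coordinate vector.

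This orthogonality is not displayed in the statement, but the paper uses it right afterwards. It makes $J_\Phi^TGJ_\Phi$ block diagonal on $\{y_d=0\}$, so the mixed blocks of $A$ in \eqref{eq:transformed-A} vanish there and the blockwise reflection $A^e=SAS$ is Lipschitz. For your chart, the entries $\widetilde G(\partial_{y_\alpha}\Phi,\partial_{y_d}\Phi)$ need not vanish on the hyperplane, so the oddly extended mixed entries of $A^e$ jump across it. The reflected functions in Lemmas~\ref{lem:odd-reflection}--\ref{lem:even-reflection} are still weak solutions. However, the interior $C^{1,\sigma}$ Schauder step in Lemma~\ref{lem:regularity} and the interior $H^2$ step after reflection in Lemma~\ref{lem:normal-trace} both require Lipschitz (at least H\"older) coefficients, and these are then unavailable.

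So your version is simpler and suffices for the geometric content (i)--(iii). The paper's construction buys the coordinate system on which its reflection-based regularity rests. To use your chart in its place, you would need either the metric-normal modification or direct boundary regularity estimates instead of reflection.
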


\begin{proof}
We first construct the boundary coordinate map and then build the cap in those same coordinates. Work in a spatial $W^{2,\infty}$ chart in which
\[
 \M\cap U=\{X=(X',X_n):r(X):=X_n-\varphi(X')>0\},
 \qquad \varphi\in W^{2,\infty}.
\]
Translate the tangential variables so that $x_0=X_0(0)$, and use the local auxiliary coordinate $s-s_*$. For notational simplicity we again denote this translated auxiliary coordinate by $s$. Write
\[
 X_0(x')=(x',\varphi(x')),
 \qquad
 \eta(x')=Dr(X_0(x'))=(-\nabla\varphi(x'),1).
\]
Here $\eta$ is the boundary conormal covector. If $g(X)$ denotes the covariant metric matrix, define the non-normalized metric normal
\[
 N(x')=g(X_0(x'))^{-1}\eta(x').
\]
The field $N$ is Lipschitz and uniformly transverse to the boundary because
\begin{equation}
 \eta(x')\cdot N(x')
 =\eta(x')^Tg(X_0(x'))^{-1}\eta(x')\ge c_0>0.
 \label{eq:metric-transversality}
\end{equation}
Extend $N$ Lipschitzly to a slightly larger tangential neighborhood and set $N_t=\rho_t*N$, where convolution is taken only in $x'$. For $t>0$, define
\begin{equation}
 \Phi(x',s,t)=\bigl(X_0(x')+tN_t(x'),s_*+s\bigr).
 \label{eq:metric-normal-map}
\end{equation}
The standard cancellation estimates for mollification of a Lipschitz field give
\begin{align*}
 &\norm{D_{x'}N_t}_{L^\infty}+\norm{\partial_tN_t}_{L^\infty}\\
 &\qquad
 +t\left(
  \norm{D_{x'}^2N_t}_{L^\infty}
  +\norm{D_{x'}\partial_tN_t}_{L^\infty}
  +\norm{\partial_t^2N_t}_{L^\infty}
 \right)
 \le C\norm N_{W^{1,\infty}}.
\end{align*}
Moreover, $N_t\to N$ and $t\partial_tN_t\to0$ uniformly as $t\downarrow0$. Hence $tN_t$ extends as a $W^{2,\infty}$ function to $t=0$ and
\[
 \partial_t\Phi(x',s,0)=(N(x'),0).
\]
At $t=0$ the tangential coordinate vectors are
\[
 T_\alpha=\partial_{x_\alpha}\Phi=(e_\alpha,\partial_\alpha\varphi,0),
 \qquad
 T_s=\partial_s\Phi=(0,\ldots,0,1).
\]
For the product metric $G=g\oplus ds^2$,
\[
 G(T_\alpha,\partial_t\Phi)=T_\alpha^TgN=T_\alpha^T\eta=0,
 \qquad
 G(T_s,\partial_t\Phi)=0.
\]
Thus the last coordinate vector is $G$-orthogonal to all tangential coordinate vectors on the flattened boundary. Also,
\[
 \partial_t(r\circ\Phi)(x',s,0)=\eta(x')\cdot N(x')\ge c_0.
\]
After shrinking the chart, $D\Phi$ is uniformly invertible, $\partial_t(r\circ\Phi)\ge c_0/2$, and $\Phi$ maps an upper half-neighborhood bi-Lipschitzly onto the corresponding portion of $\M\times I_s$. The quantitative inverse function theorem for $W^{2,\infty}$ maps gives
\[
 \Phi,\Phi^{-1}\in W^{2,\infty}.
\]

We now construct the cap in the $y=(x',s,t)$ coordinates supplied by this same map. Choose $\chi\in C^\infty([0,\infty))$ with $0\le\chi\le1$, $\chi=1$ on $[0,3]$, and $\chi=0$ on $[4,\infty)$, and put $a=1-\chi$. Fix a sufficiently large dimensionless constant $L_*>1$ and define
\begin{equation}
 \begin{aligned}
 F_r(y)&=L_*r y_d-a(\abs y/r)\abs y^2,\\
 \widehat{\mathcal O}_r&=\text{the connected component of }\{F_r>0\}\\
 &\qquad\text{which meets }B_{3r}^+.
 \end{aligned}
 \label{eq:cap-def}
\end{equation}
If $\abs y<3r$, then $F_r=L_*r y_d$, so
\[
 \widehat{\mathcal O}_r\cap B_{3r}=B_{3r}^+.
\]
If $y_d\le0$, then $F_r(y)\le0$, hence $\widehat{\mathcal O}_r\subset\{y_d>0\}$. Outside $B_{4r}$ one has $a=1$, and $F_r>0$ is equivalent to
\[
 \abs{y'}^2+\left(y_d-\frac{L_*r}{2}\right)^2
 <\left(\frac{L_*r}{2}\right)^2,
\]
so the set is bounded.

It remains to check that zero is a regular value of $F_r$. On the annulus $3r\le\abs y\le4r$, the identity $F_r=0$ gives
\[
 0\le y_d\le\frac{16r}{L_*}.
\]
Moreover,
\[
 \partial_{y_d}F_r
 =L_*r-
 \left[a'(\abs y/r)\frac{\abs y}{r}+2a(\abs y/r)\right]y_d.
\]
Choosing $L_*$ larger than a fixed constant depending only on $\chi$ gives
\[
 \partial_{y_d}F_r\ge\frac{L_*r}{2}
\]
throughout the transition annulus. In $B_{3r}$ the gradient equals $L_*r e_d$, while outside $B_{4r}$ the level set is the sphere displayed above and has nonvanishing gradient. Thus $\widehat{\mathcal O}_r$ is a bounded smooth domain, contained in the upper half-space and agreeing with it in $B_{3r}$. Since
\[
 F_r(y)=r^2F_1(y/r),
\]
its rescaled geometry is uniform.

The diameter of $\widehat{\mathcal O}_r$ is bounded by a fixed multiple of $r$ depending only on $L_*$. Choose $r_*>0$ so small that, for $0<r<r_*$, the map $\Phi$ is defined on a fixed ball containing $\overline{\widehat{\mathcal O}_r}$ and the image remains separated from $s=\pm s_0$. Define
\[
 \mathcal O_{Q_0,r}=\Phi(\widehat{\mathcal O}_r).
\]
Because $\widehat{\mathcal O}_r=B_{3r}^+$ in $B_{3r}$ and $\Phi$ parametrizes $\M\times I_s$ on the upper side of $\{y_d=0\}$, property (i) follows in the corresponding coordinate neighborhood. Property (ii) is part of the construction of $\Phi$. For (iii), in the flattened variables join the boundary point first to a point at height comparable to $r$ and then to the interior point by a straight segment. After reducing the fixed fractions in the statement, if necessary, the polygonal path lies in $B_{r/2}^+$ and has length at most $Cr$; its image under the uniformly bi-Lipschitz map $\Phi$ has the same properties up to a fixed constant. This proves the lemma.
\end{proof}

In the coordinates of Lemma~\ref{lem:cap},  set
\[
\widetilde\kappa(y)
=
|\det J_\Phi(y)|\,\kappa(\Phi(y)),
\qquad
\widetilde G(y)
=
J_\Phi(y)^T G(\Phi(y))J_\Phi(y).
\]write the local form of the product operator as
\[
 \frac1\kappa\partial_i(A^{ij}\partial_jv)=0.
\]
With $J_\Phi=D\Phi$, the full weak coefficient matrix is
\begin{equation}
 A(y)
=
\widetilde\kappa(y)\,
\widetilde G(y)^{-1}
=
|\det J_\Phi(y)|\,\kappa(\Phi(y))
J_\Phi(y)^{-1}G(\Phi(y))^{-1}J_\Phi(y)^{-T}.
 \label{eq:transformed-A}
\end{equation}
It is symmetric, uniformly elliptic, and Lipschitz. Since $J_\Phi^TGJ_\Phi$ is block diagonal on $\{y_d=0\}$ and
\[
 J_\Phi^{-1}G^{-1}J_\Phi^{-T}=(J_\Phi^TGJ_\Phi)^{-1},
\]
the tangential--normal blocks of $A$ vanish on the reflecting hyperplane. Writing
\[
 A=\begin{pmatrix}A_{\tau\tau}&a_{\tau d}\\a_{d\tau}&a_{dd}\end{pmatrix},
 \qquad
 S=\operatorname{diag}(1,\ldots,1,-1),
\]
define the reflected coefficient matrix by
\begin{equation}
 A^e(y',-y_d)=SA(y',y_d)S,
 \qquad y_d>0.
 \label{eq:reflection-matrix}
\end{equation}
The diagonal blocks are extended evenly and the mixed blocks oddly. Because
\[
 a_{\tau d}=a_{d\tau}=0\quad\text{on }\{y_d=0\},
\]
the reflected matrix $A^e$ is Lipschitz across the reflecting hyperplane. Uniform ellipticity is preserved because $S$ is orthogonal.

We shall apply the boundary continuation theorem to the pullback metric $\widetilde G=\Phi^*G$ and the corresponding pullback density. With this convention the normalized geometric conormal derivative is coordinate invariant:
\begin{equation}
 \partial_{\nu_{\widetilde G}}(V\circ\Phi)
 =\bigl(\partial_{\nu_G}V\bigr)\circ\Phi
 \quad\text{on }\{y_d=0\}.
 \label{eq:conormal-invariance}
\end{equation}
If instead the transformed equation is written only through the matrix $A$ in \eqref{eq:transformed-A}, its unnormalized boundary flux $(A\nabla(V\circ\Phi))\cdot e_d$ differs from the left-hand side of \eqref{eq:conormal-invariance} by a positive Lipschitz factor bounded above and below by constants depending only on the fixed chart. The pullback of $dS_g\,ds$ is likewise equivalent to the coordinate surface measure. Thus all boundary essential-supremum and $L^2$ estimates below are unchanged after renaming geometric constants.

\begin{lemma}[Dirichlet odd reflection]
\label{lem:odd-reflection}
Suppose $V$ satisfies
\[
 (\Delta_g+\partial_s^2)V=0
\]
near a lateral boundary cylinder and $V=0$ on the lateral boundary. In the metric-normal coordinates of Lemma~\ref{lem:cap}, the odd reflection of $v=V\circ\Phi$ is an $H^1$ weak solution of
\[
 \operatorname{div}(A^e\nabla v^e)=0
\]
in a full ball, with $A^e$ given by \eqref{eq:reflection-matrix}. After the boundary $C^{1,\sigma}$ regularity of Lemma~\ref{lem:regularity} has been established,
\begin{equation}
 \abs{\nabla v^e}\asymp\abs{\partial_{\nu_g}V}
 \quad\text{on the reflecting hyperplane}.
 \label{eq:Dir-reflection-gradient}
\end{equation}
\end{lemma}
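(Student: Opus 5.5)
We verify the weak formulation directly in the flattened half-ball, and only then identify the gradient on the hyperplane. First, we check that $v=V\circ\Phi$ is a weak solution of $\operatorname{div}(A\nabla v)=0$ in $B_{4r}^+$ with $A$ as in \eqref{eq:transformed-A}, and that $v=0$ on $\{y_d=0\}$. This is the change of variables for the divergence-form operator $\kappa^{-1}\partial_i(\kappa G^{ij}\partial_j\cdot)$: the density and the pulled-back metric combine to give $A=\widetilde\kappa\,\widetilde G^{-1}$. Since $V\in H^1$ vanishes on the lateral boundary and $\Phi$ is bi-Lipschitz and maps $\{y_d=0\}$ onto that boundary by Lemma~\ref{lem:cap}(ii), the trace of $v$ on $\{y_d=0\}$ is zero.

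Define $v^e(y',y_d)=v(y',y_d)$ for $y_d>0$ and $v^e(y',y_d)=-v(y',-y_d)$ for $y_d<0$. Because the trace vanishes, $v^e\in H^1(B_{4r})$. For a test function $\varphi\in C_c^\infty(B_{4r})$, split $\varphi=\varphi_{\rm even}+\varphi_{\rm odd}$ with respect to $y_d\mapsto -y_d$. The change of variables $y_d\mapsto-y_d$ in the lower half uses $\nabla[v(y',-y_d)]=S(\nabla v)(y',-y_d)$ and $A^e(y',-y_d)=SAS$, which give
\[
 \int_{B_{4r}}A^e\nabla v^e\cdot\nabla\varphi\,dy=2\int_{B_{4r}^+}A\nabla v\cdot\nabla\varphi_{\rm odd}\,dy .
\]
This is the key sign bookkeeping: the even part cancels between the two halves, and the odd part doubles. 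Now $\varphi_{\rm odd}$ vanishes on $\{y_d=0\}$ and lies in $H^1_0(B_{4r}^+)$ after cutoff, so it is an admissible test function for the half-ball equation. Hence the right-hand side is zero. Lemma~\ref{lem:cap} already gives that $A^e$ is Lipschitz, because the mixed blocks vanish on the hyperplane, and uniformly elliptic. We do not need this Lipschitz property for the weak formulation, but it is needed later when we apply Lemma~\ref{lem:BZ}.

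For \eqref{eq:Dir-reflection-gradient}, take the $C^{1,\sigma}$ regularity of Lemma~\ref{lem:regularity} up to the hyperplane. Since $v=0$ on $\{y_d=0\}$, the tangential derivatives vanish there. The odd extension has $\partial_{y_d}v^e$ even, so $\nabla v^e$ is continuous across the hyperplane and equals $(0,\ldots,0,\partial_{y_d}v)$ on it. The normalized conormal derivative in the flat coordinates is
\[
 \partial_{\nu_{\widetilde G}}v=\frac{(\widetilde G^{-1}\nabla v)\cdot e_d}{(\widetilde G^{-1}e_d\cdot e_d)^{1/2}}=(\widetilde G^{dd})^{1/2}\partial_{y_d}v,
\]
because $\widetilde G$ is block diagonal on $\{y_d=0\}$ and the tangential gradient vanishes. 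The factor $\widetilde G^{dd}$ is bounded above and below by the chart constants. Combined with the invariance \eqref{eq:conormal-invariance}, this yields $\abs{\nabla v^e}\asymp\abs{\partial_{\nu_g}V}\circ\Phi$.

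The main obstacle is the test-function step: we must make sure that $\varphi_{\rm odd}$ restricted to the half-ball is genuinely admissible. This requires that the half-ball weak formulation allow test functions vanishing only on the flat face and not on the curved part. We handle it by noting that $\operatorname{supp}\varphi\Subset B_{4r}$, so near the curved part of $\partial B_{4r}^+$ the function $\varphi_{\rm odd}$ is already zero. The equation also holds in all of $B_{4r}^+$, because by Lemma~\ref{lem:cap}(i) this region lies inside $\M\times I_s$.
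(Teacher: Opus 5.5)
Your proposal is correct and takes essentially the same approach as the paper. The odd extension is in $H^1$ by the zero trace. The change of variables $y_d\mapsto -y_d$ with $A^e=SAS$ reduces the weak formulation to the upper half-ball tested against $\zeta(y',y_d)-\zeta(y',-y_d)=2\varphi_{\rm odd}$, which vanishes on the flat face. The gradient equivalence then follows from the vanishing tangential derivatives together with the uniform bounds on $\widetilde G^{dd}$, which makes explicit the paper's brief remark that the metric-normal direction is uniformly equivalent to the geometric normal.
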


\begin{proof}
Set $v^e(y',y_d)=v(y',y_d)$ for $y_d>0$ and
\[
 v^e(y',y_d)=-v(y',-y_d)
\]
for $y_d<0$. The zero trace implies $v^e\in H^1$ across the hyperplane. Let $\zeta\in C_c^\infty$ be a test function in the full ball. Splitting the integral into the upper and lower half-balls, changing variables in the lower half, and using the coefficient extension \eqref{eq:reflection-matrix} reduces the weak formulation to the upper half-ball with the test function
\[
 \zeta(y',y_d)-\zeta(y',-y_d),
\]
which has zero trace on $y_d=0$. The weak Dirichlet equation therefore gives zero. No pointwise boundary differentiability is used at this stage. Once a $C^{1,\sigma}$ representative is known, the zero boundary trace implies that all tangential derivatives vanish, while the metric-normal coordinate direction is uniformly equivalent to the geometric normal. This gives \eqref{eq:Dir-reflection-gradient}.
\end{proof}

\begin{lemma}[Neumann even reflection]
\label{lem:even-reflection}
Suppose $W$ satisfies
\[
 (\Delta_g+\partial_s^2)W=0,
 \qquad
 \partial_{\nu_g}W=0
\]
on a lateral boundary cylinder. In the same coordinates, the even reflection of $w=W\circ\Phi$ is an $H^1$ weak solution of
\[
 \operatorname{div}(A^e\nabla w^e)=0
\]
in a full ball. After Lemma~\ref{lem:regularity},
\begin{equation}
 \abs{\nabla_GW}
 \asymp
 \abs{\nabla_\tau(W|_{\partial\M\times I_s})}
 \quad\text{on the lateral boundary}.
 \label{eq:Neu-reflection-gradient}
\end{equation}
\end{lemma}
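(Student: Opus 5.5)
The proof runs parallel to Lemma~\ref{lem:odd-reflection}, with the odd extension replaced by the even one and the test function symmetrized instead of antisymmetrized. Set $w^e(y',y_d)=w(y',y_d)$ for $y_d>0$ and $w^e(y',y_d)=w(y',-y_d)$ for $y_d<0$. Since $w\in H^1$ of the upper half-ball, the even extension has matching traces on $\{y_d=0\}$. It therefore lies in $H^1$ of the full ball, and its weak gradient is $\nabla w^e(y',-y_d)=S\nabla w(y',y_d)$. No vanishing trace is needed for this step.

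For the weak equation, take $\zeta\in C_c^\infty$ in the full ball. Split $\int A^e\nabla w^e\cdot\nabla\zeta$ into the upper and lower halves, and in the lower half substitute $y_d\mapsto-y_d$. By \eqref{eq:reflection-matrix} and $S^2=\Id$, the lower integrand becomes $SAS\,S\nabla w\cdot S\nabla\tilde\zeta=A\nabla w\cdot\nabla\tilde\zeta$, where $\tilde\zeta(y',y_d)=\zeta(y',-y_d)$. The whole integral thus reduces to $\int_{B^+}A\nabla w\cdot\nabla(\zeta+\tilde\zeta)$. The Neumann condition $\partial_{\nu_g}W=0$ is, in weak form, exactly the statement that $\int_{B^+}A\nabla w\cdot\nabla\eta=0$ for every $\eta\in H^1$ vanishing only near the curved part of $\partial B^+$, with no constraint on $\{y_d=0\}$. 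Here we use the coordinate invariance \eqref{eq:conormal-invariance} and the fact that the unnormalized flux $(A\nabla w)\cdot e_d$ differs from the geometric conormal derivative only by a positive factor. Taking $\eta=\zeta+\tilde\zeta$ gives zero. The Lipschitz continuity of $A^e$ across the hyperplane, which uses the vanishing of the mixed blocks of $A$ on $\{y_d=0\}$, is not needed for the weak identity itself. It matters only because it places $w^e$ in the Lipschitz divergence-form class needed by Lemma~\ref{lem:regularity} and Lemma~\ref{lem:BZ}.

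For \eqref{eq:Neu-reflection-gradient}, invoke Lemma~\ref{lem:regularity} to obtain a $C^{1,\sigma}$ (in particular pointwise $C^1$) representative up to $\{y_d=0\}$. On the hyperplane, evenness gives $\partial_{y_d}w^e=0$, so $\nabla w$ is spanned by the tangential derivatives $\partial_{y'}w$. These are the coordinate derivatives of the boundary trace $W|_{\partial\M\times I_s}\circ\Phi$. Since $\Phi$ is uniformly bi-Lipschitz and $G$ is uniformly elliptic, $\abs{\nabla_GW}\asymp\abs{\nabla w}=\abs{\nabla_{y'}w}\asymp\abs{\nabla_\tau(W|_{\partial\M\times I_s})}$, with constants depending only on the chart.

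The main obstacle is the identification of the Neumann condition with the weak identity against test functions that do not vanish on $\{y_d=0\}$. In particular, one must check that the metric-normal coordinates of Lemma~\ref{lem:cap} make $(A\nabla w)\cdot e_d$ proportional to $\partial_{\nu_G}W$, so that $\partial_{\nu_g}W=0$ means exactly $a_{dd}\partial_{y_d}w=0$. This holds because $a_{d\tau}=0$ on the hyperplane.
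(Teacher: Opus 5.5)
Your proposal is correct and follows the paper's proof: you take the even extension, split the weak formulation and reflect the lower half, and then use the $C^{1,\sigma}$ representative from Lemma~\ref{lem:regularity} to make the normal derivative vanish on the hyperplane; your symmetrized test function $\zeta+\tilde\zeta$ is just the weak-form version of the paper's ``the two conormal fluxes cancel.'' The obstacle you flag is milder than you suggest: the weak Neumann identity is invariant under the $W^{2,\infty}$ change of variables, and $(A\nabla w)\cdot e_d$ is proportional to the conormal derivative in any boundary-flattening chart, so $a_{d\tau}=0$ is needed only to rewrite that flux as $a_{dd}\partial_{y_d}w$.
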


\begin{proof}
Set $w^e(y',y_d)=w(y',\abs{y_d})$ and use the same reflected coefficient matrix. After splitting the weak formulation and changing variables, the two conormal fluxes on the reflecting hyperplane cancel because the original conormal derivative is zero. Thus $w^e$ is a weak solution in the full ball. Once the $C^{1,\sigma}$ representative is available, the normal derivative vanishes pointwise and the full boundary gradient consists precisely of the tangential derivatives. This proves \eqref{eq:Neu-reflection-gradient}.
\end{proof}

\begin{lemma}
\label{lem:BM-open}
Fix a nonempty open set $\omega\Subset\M$. In both the Dirichlet and Neumann cases, there exists $C_\omega>0$ such that, for every $\Lambda\ge1$ and
\[
 f_\Lambda=\sum_{\kappa_j\le\Lambda}c_je_j,
\]
where $\kappa_j$ denotes either $\lambda_j$ or $\mu_j$, one has
\begin{equation}
 \sum_{\kappa_j\le\Lambda}\abs{c_j}^2
 \le C_\omega e^{C_\omega\sqrt\Lambda}
 \norm{f_\Lambda}_{L^2(\omega)}^2.
 \label{eq:BM-open}
\end{equation}
\end{lemma}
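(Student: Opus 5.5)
Lemma~\ref{lem:BM-open} is the interior Lebeau--Robbiano spectral inequality. We prove it by lifting to the elliptic cylinder and applying interior quantitative unique continuation. The only new point is that the metric is merely Lipschitz and the boundary only $W^{2,\infty}$.

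\emph{Lifting.} Set $F(x,s)=\sum_{\kappa_j\le\Lambda}c_je_j(x)\cosh(\sqrt{\kappa_j}s)$ on $\M\times(-S,S)$ for some fixed $S>0$. By \eqref{eq:intro-U-pde}--\eqref{eq:intro-W-pde}, $F$ solves $(\Delta_g+\partial_s^2)F=0$ with the homogeneous Dirichlet or Neumann condition on the lateral boundary. We also have $F(\cdot,0)=f_\Lambda$ and $\partial_sF(\cdot,0)=0$. Orthonormality gives
\[
\norm{F}_{H^1(\M\times(-S,S))}^2\le Ce^{C\sqrt\Lambda}\sum\abs{c_j}^2 .
\]
In the other direction, integrating $\cosh^2(\sqrt{\kappa_j}s)\ge1$ over $s\in(-S/2,S/2)$ gives
\[
\sum\abs{c_j}^2\le C\norm{F}_{L^2(\M\times(-S/2,S/2))}^2 .
\]

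\emph{Interpolation.} The lifted operator is in divergence form with Lipschitz coefficients, so interior three-ball inequalities hold (Garofalo--Lin frequency function or Carleman estimates for Lipschitz metrics). Near the lateral boundary we use the odd or even reflections of Lemmas~\ref{lem:odd-reflection}--\ref{lem:even-reflection}. These turn $F$ into a solution of $\operatorname{div}(A^e\nabla F^e)=0$ with Lipschitz $A^e$ in full balls, so three-ball inequalities also hold across the boundary. A chain of balls from $\omega\times(-\delta,\delta)$ through the compact connected $\M\times[-S/2,S/2]$ then gives
\[
\norm{F}_{L^2(\M\times(-S/2,S/2))}\le C\norm{F}_{H^1(\omega\times(-\delta,\delta))}^{\beta}\norm{F}_{H^1(\M\times(-S,S))}^{1-\beta}.
\]

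\emph{Removing the auxiliary variable.} We must bound $\norm{F}_{H^1(\omega\times(-\delta,\delta))}$ by $\norm{f_\Lambda}_{L^2(\omega)}$. The cleanest way is to compare $F$ with the solution of a Cauchy problem on $\omega\times(-\delta,\delta)$ whose data are $F=f_\Lambda$ and $\partial_sF=0$ at $s=0$. The standard route is the Lebeau--Robbiano boundary Carleman estimate at $\{s=0\}$. It is available for Lipschitz coefficients, since only one derivative of the coefficients enters the Carleman computation. It controls $F$ on a smaller $\omega'\times(-\delta',\delta')$ by $\norm{f_\Lambda}_{H^1(\omega)}^{\beta'}\norm{F}_{H^1}^{1-\beta'}$. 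The term $\norm{f_\Lambda}_{H^1(\omega)}$ is then reduced to $\norm{f_\Lambda}_{L^2(\omega)}$ in one of two ways. One option is to use a version of the Carleman estimate with only $L^2$ Cauchy data, for instance after integrating $F$ in $s$. The other is interior Caccioppoli for $F$ combined with $\abs{\partial_s^2F}=\abs{\Delta_gF}$. Alternatively, one may invoke directly the Burq--Moyano spectral inequality for Lipschitz metrics \cite{BurqMoyano}, which this lemma's name suggests is the intended source.

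\emph{Conclusion.} Writing $X=\sum\abs{c_j}^2$ and combining the bounds gives
\[
X\le Ce^{C\sqrt\Lambda}\norm{f_\Lambda}_{L^2(\omega)}^{2\gamma}X^{1-\gamma},
\]
which yields \eqref{eq:BM-open} after absorbing $X^{1-\gamma}$. The main obstacle is the step at $\{s=0\}$ and near the $W^{2,\infty}$ boundary, namely justifying the Carleman estimates with only Lipschitz coefficients. The reflection lemmas handle the boundary, and for the interior Carleman estimate I would rely on \cite{BurqMoyano}.
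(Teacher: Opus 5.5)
Your proposal is correct in outline, but it takes a genuinely different route from the paper. The paper does not reprove an interior spectral inequality. It quotes Burq--Moyano (Theorem~1, estimate (1.6)). On compact $W^{2,\infty}$ manifolds with Lipschitz metric, under Dirichlet or Neumann conditions, that result gives $\norm{f_\Lambda}_{L^\infty(\M)}\le Ce^{C\sqrt\Lambda}\norm{f_\Lambda}_{L^1(E_1)}$ for any fixed set $E_1$ of positive measure. Taking $E_1=\omega$ and combining with $\norm{f_\Lambda}_{L^2(\M)}\le\operatorname{Vol}_g(\M)^{1/2}\norm{f_\Lambda}_{L^\infty(\M)}$, $\norm{f_\Lambda}_{L^1(\omega)}\le\operatorname{Vol}_g(\omega)^{1/2}\norm{f_\Lambda}_{L^2(\omega)}$ and orthonormality proves the lemma in three lines.

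You instead run the classical Lebeau--Robbiano/Jerison--Lebeau scheme: elliptic lift, propagation through the cylinder by three-ball inequalities (using the paper's reflections at $\partial\M$), and a Carleman estimate at $\{s=0\}$. This is more self-contained and shows where $e^{C\sqrt\Lambda}$ comes from. However, it only handles open $\omega$, whereas the citation gives measurable sets and an $L^1$ norm for free. Moreover, the one delicate ingredient, Carleman estimates with Lipschitz coefficients, is exactly what you end up outsourcing to Burq--Moyano, so the gain over the direct citation is small.

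One step needs tightening. With the $\cosh$ lift, the Cauchy data at $s=0$ are $(f_\Lambda,0)$, and the boundary Carleman estimate then needs $\norm{f_\Lambda}_{H^1(\omega)}$. Your first fix works, but it must be used from the start. Set $G=\int_0^sF\,d\sigma=\sum c_je_j\sinh(\sqrt{\kappa_j}s)/\sqrt{\kappa_j}$, with $c_0e_0s$ for the Neumann zero mode. It has the following properties:
\begin{itemize}
\item it solves the same equation and lateral boundary condition, because $\partial_sF(\cdot,0)=0$;
\item it has Cauchy data $(0,f_\Lambda)$;
\item it still satisfies $X\lesssim\norm{G}_{L^2(\M\times(-S/2,S/2))}^2$, since $\sinh y/y\ge1$.
\end{itemize}
The three-ball chain is then run for $G$, not for $F$.

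Your Caccioppoli alternative does not reduce $\norm{f_\Lambda}_{H^1(\omega')}$ to $\norm{f_\Lambda}_{L^2(\omega)}$ alone. Integrating by parts on the slice, with $\norm{\Delta_gf_\Lambda}_{L^2(\M)}\le\Lambda X^{1/2}$, only gives $\norm{\nabla_gf_\Lambda}_{L^2(\omega')}\lesssim\norm{f_\Lambda}_{L^2(\omega)}^{1/2}(\Lambda X^{1/2})^{1/2}+\norm{f_\Lambda}_{L^2(\omega)}$. That interpolation bound is still enough, because your final absorption step tolerates extra powers of $X$.
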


\begin{proof}
Burq--Moyano \cite[Theorem~1, estimate~(1.6), and Section~3]{BurqMoyano} write the eigenvalues as $\rho_j^2$ and prove, on compact $W^{2,\infty}$ manifolds with boundary and Lipschitz metric, for either Dirichlet or Neumann conditions, that for every fixed positive-measure set $E_1$,
\[
 \norm{\sum_{\rho_j\le R}c_je_j}_{L^\infty(\M)}
 \le Ce^{CR}
 \norm{\sum_{\rho_j\le R}c_je_j}_{L^1(E_1)}.
\]
Take $E_1=\omega$, $R=\sqrt\Lambda$, and $\rho_j^2=\kappa_j$. Orthogonality gives
\[
 \norm{f_\Lambda}_{L^2(\M)}^2
 =\sum_{\kappa_j\le\Lambda}\abs{c_j}^2.
\]
Together with
\[
 \norm f_{L^2(\M)}\le \operatorname{Vol}_g(\M)^{1/2}\norm f_{L^\infty(\M)},
 \qquad
 \norm f_{L^1(\omega)}\le \operatorname{Vol}_g(\omega)^{1/2}\norm f_{L^2(\omega)},
\]
this yields \eqref{eq:BM-open} after squaring and renaming the constant.
\end{proof}

\begin{lemma}
\label{lem:BM-product}
Let $V_\Lambda$ denote either $U_\Lambda$ or $W_\Lambda$, and write
\[
 A_\Lambda^2=\sum_{\kappa_j\le\Lambda}\abs{c_j}^2.
\]
Fix $\omega\Subset\M$ and a nondegenerate interval $I_1\Subset I_s$. Then
\begin{equation}
 A_\Lambda^2
 \le Ce^{C\sqrt\Lambda}
 \norm{V_\Lambda}_{L^2(\omega\times I_1)}^2.
 \label{eq:BM-product}
\end{equation}
\end{lemma}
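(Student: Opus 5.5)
The plan is to reduce \eqref{eq:BM-product} to Lemma~\ref{lem:BM-open}, applied slice by slice in the auxiliary variable. For fixed $s\in I_1$,
\[
 V_\Lambda(\cdot,s)=\sum_{\kappa_j\le\Lambda}c_j\cosh(\sqrt{\kappa_j}s)\,e_j
\]
is again a finite spectral combination with coefficients $c_j\cosh(\sqrt{\kappa_j}s)$. Here $c_j=a_j$, $e_j=\phi_j$ in the Dirichlet case and $c_j=b_j$, $e_j=\psi_j$ in the Neumann case.

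Lemma~\ref{lem:BM-open} applied to this combination gives
\[
 \sum_{\kappa_j\le\Lambda}\abs{c_j}^2\cosh^2(\sqrt{\kappa_j}s)
 \le C_\omega e^{C_\omega\sqrt\Lambda}\norm{V_\Lambda(\cdot,s)}_{L^2(\omega)}^2 .
\]
Since $\cosh\ge1$, the left-hand side dominates $A_\Lambda^2$. This holds for every $s\in I_1$ with the same constant, because $C_\omega$ is independent of the coefficients.

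Integrating in $s$ over $I_1$ and using Fubini then yields
\[
 \abs{I_1}\,A_\Lambda^2\le C_\omega e^{C_\omega\sqrt\Lambda}\norm{V_\Lambda}_{L^2(\omega\times I_1)}^2 .
\]
Dividing by $\abs{I_1}>0$, which is allowed because $I_1$ is nondegenerate, gives \eqref{eq:BM-product} with $C=\max(C_\omega,C_\omega/\abs{I_1})$. The condition $I_1\Subset I_s$ is used only to guarantee that $V_\Lambda$ is defined on $\omega\times I_1$. The zero Neumann mode ($\mu_0=0$, $\cosh 0=1$) needs no special treatment.

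There is essentially no obstacle; the only points to check are measurability and finiteness. Because the sum is finite and each $\phi_j,\psi_j\in L^2(\M)$, the map $s\mapsto\norm{V_\Lambda(\cdot,s)}_{L^2(\omega)}^2$ is a finite combination of products of $\cosh$ terms. It is therefore continuous, and the Fubini step is legitimate.

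If one prefers not to discard the $\cosh$ weights, one can instead keep the upper bound $\cosh^2(\sqrt{\kappa_j}s)\le e^{2s_0\sqrt\Lambda}$ to compare both directions. This is not needed for the lower bound on $A_\Lambda^2$ stated in the lemma.
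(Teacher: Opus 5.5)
Your proposal is correct and follows the paper's proof essentially verbatim. Like the paper, you apply Lemma~\ref{lem:BM-open} to each slice $V_\Lambda(\cdot,s)$, use $\cosh^2(\sqrt{\kappa_j}s)\ge1$, integrate over $I_1$, and absorb $\abs{I_1}^{-1}$ into the constant.
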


\begin{proof}
For each fixed $s\in I_1$,
\[
 V_\Lambda(\cdot,s)
 =\sum_{\kappa_j\le\Lambda}c_j\cosh(\sqrt{\kappa_j}s)e_j
\]
is again a low-frequency sum. Applying Lemma~\ref{lem:BM-open}, integrating in $s$, and using $\cosh^2(\sqrt{\kappa_j}s)\ge1$ give
\[
 \abs{I_1}A_\Lambda^2
 \le Ce^{C\sqrt\Lambda}
 \norm{V_\Lambda}_{L^2(\omega\times I_1)}^2.
\]
Absorb $\abs{I_1}^{-1}$ into the constant.
\end{proof}

\begin{lemma}
\label{lem:energy}
For both the Dirichlet and Neumann extensions,
\begin{align}
 \norm{V_\Lambda}_{L^2(\M\times I_s)}
 &\le Ce^{s_0\sqrt\Lambda}A_\Lambda,
 \label{eq:energy-L2}\\
 \norm{V_\Lambda}_{H^1(\M\times I_s)}
 &\le C(1+\sqrt\Lambda)e^{s_0\sqrt\Lambda}A_\Lambda.
 \label{eq:energy-H1}
\end{align}
\end{lemma}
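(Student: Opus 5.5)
The $L^2$ bound \eqref{eq:energy-L2} follows directly from orthonormality in $x$ and explicit integration in $s$; the $H^1$ bound \eqref{eq:energy-H1} then follows by applying the same argument to $\nabla_g V_\Lambda$ and $\partial_s V_\Lambda$, using the quadratic form of $A_D$ or $A_N$. Throughout, write $e_j,\kappa_j,c_j$ for either $(\phi_j,\lambda_j,a_j)$ or $(\psi_j,\mu_j,b_j)$, so that
\[
 V_\Lambda(x,s)=\sum_{\kappa_j\le\Lambda}c_je_j(x)\cosh(\sqrt{\kappa_j}s).
\]

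\textbf{The $L^2$ bound.} For fixed $s\in I_s$, orthonormality of $\{e_j\}$ in $L^2(\M,dV_g)$ gives
\[
 \norm{V_\Lambda(\cdot,s)}_{L^2(\M)}^2=\sum_{\kappa_j\le\Lambda}\abs{c_j}^2\cosh^2(\sqrt{\kappa_j}s).
\]
Since $\abs s<s_0$ and $\kappa_j\le\Lambda$, we have $\cosh(\sqrt{\kappa_j}s)\le e^{s_0\sqrt\Lambda}$. Integrating over $I_s$, whose length is $2s_0$, yields \eqref{eq:energy-L2} with $C=(2s_0)^{1/2}$.

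\textbf{The $H^1$ bound.} For the $s$-derivative,
\[
 \partial_sV_\Lambda=\sum_{\kappa_j\le\Lambda}c_j\sqrt{\kappa_j}\sinh(\sqrt{\kappa_j}s)e_j,
\]
and orthonormality together with $\abs{\sinh}\le\cosh$ gives
\[
 \norm{\partial_sV_\Lambda(\cdot,s)}_{L^2(\M)}^2\le\Lambda e^{2s_0\sqrt\Lambda}A_\Lambda^2.
\]
For the spatial gradient, use the quadratic form of the self-adjoint operator. The eigenfunctions lie in the form domain ($H^1_0$ in the Dirichlet case, $H^1$ in the Neumann case) and satisfy
\[
 \int_\M\ip{\nabla_ge_j}{\nabla_ge_k}_g\dd V_g=\kappa_j\delta_{jk},
\]
which is Green's identity in weak form. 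For a finite sum $f=\sum d_je_j$ this gives $\norm{\nabla_gf}_{L^2}^2=\sum\kappa_j\abs{d_j}^2$. Taking $d_j=c_j\cosh(\sqrt{\kappa_j}s)$, we get
\[
 \norm{\nabla_gV_\Lambda(\cdot,s)}_{L^2(\M)}^2\le\Lambda e^{2s_0\sqrt\Lambda}A_\Lambda^2.
\]
The Riemannian gradient norm is uniformly equivalent to the coordinate $H^1$ seminorm, since $g$ is uniformly elliptic and bounded in a fixed finite atlas. Integrating in $s$ and summing the $L^2$, spatial-gradient and $s$-derivative contributions bounds $\norm{V_\Lambda}_{H^1}^2$ by $C(1+\Lambda)e^{2s_0\sqrt\Lambda}A_\Lambda^2$. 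Using $\sqrt{1+\Lambda}\le1+\sqrt\Lambda$ then gives \eqref{eq:energy-H1}.

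The only point requiring care is this orthogonality of gradients on a Lipschitz metric with $W^{2,\infty}$ boundary. I would justify it purely variationally: the eigenpairs are defined through the closed form $\int\ip{\nabla_gu}{\nabla_gv}_g\dd V_g$ on $H^1_0$ or $H^1$, so the orthogonality is an identity in that form and needs no pointwise regularity of the eigenfunctions.
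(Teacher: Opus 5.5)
Your proposal is correct and follows the paper's proof: orthonormality together with $\cosh(\sqrt{\kappa_j}s)\le e^{s_0\sqrt\Lambda}$ gives the $L^2$ bound, and the form identity for the spatial gradient plus the explicit bound on $\partial_s\cosh(\sqrt{\kappa_j}s)$ gives the $H^1$ bound. You state the full form identity $\int_\M\ip{\nabla_ge_j}{\nabla_ge_k}_g\dd V_g=\kappa_j\delta_{jk}$, not only the diagonal case $\norm{\nabla_ge_j}^2=\kappa_j$ that the paper records, and this is exactly what lets you sum the modes without loss.
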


\begin{proof}
Orthogonality implies, for every $s\in I_s$,
\[
 \norm{V_\Lambda(\cdot,s)}_{L^2(\M)}^2
 =\sum_{\kappa_j\le\Lambda}\abs{c_j}^2\cosh^2(\sqrt{\kappa_j}s)
 \le e^{2s_0\sqrt\Lambda}A_\Lambda^2.
\]
Integration in $s$ proves \eqref{eq:energy-L2}. The form identity gives
\[
 \norm{\nabla_ge_j}_{L^2(\M)}^2=\kappa_j
\]
for both boundary conditions, and
\[
 \abs{\partial_s\cosh(\sqrt{\kappa_j}s)}
 \le\sqrt\Lambda e^{s_0\sqrt\Lambda}.
\]
These estimates yield \eqref{eq:energy-H1}.
\end{proof}

\begin{lemma}
\label{lem:regularity}
Let $K$ be a fixed compact subset of $\overline\M\times I_s$ separated from the artificial end faces $s=\pm s_0$. For every fixed $0<\sigma<1$, there exists $C_K>0$ such that
\begin{equation}
 \norm{V_\Lambda}_{C^{1,\sigma}(K)}
 \le C_Ke^{C_K\sqrt\Lambda}A_\Lambda.
 \label{eq:C1sigma}
\end{equation}
\end{lemma}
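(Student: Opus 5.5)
We will prove \eqref{eq:C1sigma} by combining the energy bound of Lemma~\ref{lem:energy} with local elliptic regularity for the product operator $\Delta_g+\partial_s^2$. The equation is in divergence form with Lipschitz coefficients. The boundary charts of Lemma~\ref{lem:cap} are $W^{2,\infty}$, and the reflections of Lemmas~\ref{lem:odd-reflection}--\ref{lem:even-reflection} turn the boundary problem into an interior one. Since $\Lambda$ enters only through the bound on $A_\Lambda$, every constant in the regularity theory can be taken independent of $\Lambda$. The polynomial factor $1+\sqrt\Lambda$ from \eqref{eq:energy-H1} is then absorbed into $e^{C_K\sqrt\Lambda}$.

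First we cover $K$ by finitely many interior balls and finitely many boundary cylinders. Each ball is chosen with a fixed multiple still inside $\M\times I_s$ and away from $s=\pm s_0$, which is possible because $K$ is compact and separated from the end faces.

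On an interior ball we work in a $W^{2,\infty}$ chart, where $V_\Lambda$ solves $\partial_i(A^{ij}\partial_jV_\Lambda)=0$ with $A$ Lipschitz and uniformly elliptic. Since the coefficients are Lipschitz, the $W^{2,p}$ interior estimates hold for every $p<\infty$:
\[
\norm{V_\Lambda}_{W^{2,p}(B)}\le C_p\norm{V_\Lambda}_{L^2(2B)}.
\]
To obtain this, we write the equation in nondivergence form, $A^{ij}\partial_{ij}V+(\partial_iA^{ij})\partial_jV=0$. This has continuous leading coefficients and bounded lower-order terms, so Calder\'on--Zygmund theory applies, bootstrapping $p$ from $2$ upward via Sobolev embedding. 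Morrey's embedding with $p>d/(1-\sigma)$ then gives $C^{1,\sigma}$.

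Near the lateral boundary we pass to the metric-normal coordinates $\Phi$ of Lemma~\ref{lem:cap} and reflect: oddly in the Dirichlet case, evenly in the Neumann case. By Lemmas~\ref{lem:odd-reflection} and \ref{lem:even-reflection}, the reflected function is an $H^1$ weak solution of $\operatorname{div}(A^e\nabla v^e)=0$ in a full ball, with $A^e$ Lipschitz. The interior argument above then applies verbatim to $v^e$. Composing back with $\Phi$ preserves $C^{1,\sigma}$ with uniform constants, because $\Phi$ and $\Phi^{-1}$ are $W^{2,\infty}$, so $D\Phi$ is Lipschitz.

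Summing over the finite cover bounds $\norm{V_\Lambda}_{C^{1,\sigma}(K)}$ by $C\norm{V_\Lambda}_{L^2(\M\times I_s)}\le Ce^{s_0\sqrt\Lambda}A_\Lambda$, using \eqref{eq:energy-L2}.

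The main obstacle is the boundary step. We must justify that the reflected coefficients are genuinely Lipschitz, which relies on the vanishing of the mixed blocks of $A$ on $\{y_d=0\}$, already established before Lemma~\ref{lem:odd-reflection}. We must also check that the $W^{2,p}$ theory applies to the merely $H^1$ reflected solution. For the latter I would first use De Giorgi--Nash together with a Caccioppoli-type difference-quotient argument to get $v^e\in H^2_{\mathrm{loc}}$, which is valid for Lipschitz $A^e$. That places $v^e$ in the strong-solution class, and the $L^p$ bootstrap then proceeds as in the interior case.
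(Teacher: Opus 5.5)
Your proof is correct, and its architecture matches the paper's. You use a finite cover of $K$ by interior balls and lateral boundary pieces kept away from $s=\pm s_0$, and flatten near the boundary by the metric-normal map $\Phi$ of Lemma~\ref{lem:cap}. You then reflect oddly or evenly, with $A^e$ Lipschitz because the mixed blocks vanish. You bound the $C^{1,\sigma}$ norm on a ball by the $L^2$ norm on a larger one, transport back through $\Phi,\Phi^{-1}\in W^{2,\infty}$, and close with \eqref{eq:energy-L2}.

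Where you differ is the local regularity engine.

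\emph{The paper's route.} It uses only that a Lipschitz matrix is $C^{0,\sigma}$. It then invokes the interior gradient Schauder estimate for divergence-form equations with H\"older coefficients, together with local boundedness. This bounds the $C^{1,\sigma}$ norm by the $L^2$ norm in a single citation, and it would still work if the reflected coefficients were merely H\"older.

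\emph{Your route.} You use the Lipschitz bound essentially. Difference quotients give $H^2_{\mathrm{loc}}$, and the equation becomes $A^{ij}\partial_{ij}V+(\partial_iA^{ij})\partial_jV=0$, with continuous leading part and bounded drift. You then bootstrap the Calder\'on--Zygmund $W^{2,p}$ estimates, and Morrey's embedding with $p>d/(1-\sigma)$ concludes.

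\emph{What each buys.} Your route is longer and needs more of the coefficients. In exchange, it starts explicitly from the $H^1$ weak solution produced by Lemmas~\ref{lem:odd-reflection}--\ref{lem:even-reflection}. The Schauder estimate in \cite[Chapter~8]{GilbargTrudinger} is stated a priori for solutions already in $C^{1,\alpha}$, so the paper folds that weak-to-classical regularity step into the citation. Your route also yields the stronger intermediate fact $V_\Lambda\in W^{2,p}_{\mathrm{loc}}$ for all $p<\infty$.

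Three minor points:
\begin{itemize}
\item De Giorgi--Nash is not needed, since the difference-quotient argument alone gives $H^2_{\mathrm{loc}}$ for Lipschitz coefficients.
\item That same step is also needed on your interior balls, where your bootstrap starts from $W^{2,2}$.
\item The absorption of the factor $1+\sqrt\Lambda$ from \eqref{eq:energy-H1} announced in your first paragraph is superfluous, because your final bound uses only \eqref{eq:energy-L2}.
\end{itemize}
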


\begin{proof}
Cover $K$ by finitely many interior balls and lateral boundary half-balls whose doubled radii remain separated from $s=\pm s_0$. Interior balls are handled by the standard interior estimate for harmonic functions on a Lipschitz metric background.

On a boundary half-ball, flatten the boundary by the map of Lemma~\ref{lem:cap} and use Lemmas~\ref{lem:odd-reflection} and \ref{lem:even-reflection}. The reflected function $\widetilde V$ is a weak solution of
\[
 \partial_i(a^e_{ij}\partial_j\widetilde V)=0
 \quad\text{in }B_r,
 \qquad
 A^e=(a^e_{ij})\in W^{1,\infty}(B_r),
\]
with uniform ellipticity and Lipschitz bounds. Since $A^e\in W^{1,\infty}$, for every fixed $0<\sigma<1$ it belongs to $C^{0,\sigma}$ with
\[
 [A^e]_{C^{0,\sigma}(B_r)}
 \le (2r)^{1-\sigma}\norm{\nabla A^e}_{L^\infty(B_r)}.
\]
The interior gradient Schauder estimate for weak solutions of divergence-form equations with H\"older coefficients, combined with local boundedness, gives on concentric balls
\begin{align}
 &\norm{\widetilde V}_{L^\infty(B_{r/2})}
 +r\norm{\nabla\widetilde V}_{L^\infty(B_{r/2})}
 +r^{1+\sigma}[\nabla\widetilde V]_{C^{0,\sigma}(B_{r/2})}
 \notag\\
 &\qquad\le Cr^{-d/2}\norm{\widetilde V}_{L^2(B_r)}.
 \label{eq:Schauder-local}
\end{align}
The constant depends only on the dimension, ellipticity, $\sigma$, and the scale-invariant H\"older norm of the coefficients; see \cite[Chapter~8]{GilbargTrudinger} and \cite{LadyzhenskayaUraltseva}. The reflected $L^2$ norm is bounded by a fixed multiple of the norm on the original half-ball. Since $\Phi$ and $\Phi^{-1}$ are $W^{2,\infty}$ with uniform bounds, composition with the coordinate map preserves $C^{1,\sigma}$.

Summing over the fixed finite cover gives
\[
 \norm{V_\Lambda}_{C^{1,\sigma}(K)}
 \le C_K\norm{V_\Lambda}_{L^2(\M\times I_s)}.
\]
Inserting Lemma~\ref{lem:energy} proves \eqref{eq:C1sigma}. Notice that the weak reflection is established before any pointwise boundary differentiability is used; the pointwise identities in Lemmas~\ref{lem:odd-reflection}--\ref{lem:even-reflection} are interpreted only after the present regularity estimate is obtained.
\end{proof}

\begin{lemma}
\label{lem:density-cylinder}
Let $J\subset\Sigma_s$ be measurable with $\abs J>0$. Then there exists a boundary coordinate cylinder
\[
 Q=\Delta\times I_0\Subset\Sigma_s,
\]
separated from the artificial end faces, and $S=J\cap Q$ such that
\begin{equation}
 \abs S\ge\vartheta\abs Q>0.
 \label{eq:density-cylinder}
\end{equation}
\end{lemma}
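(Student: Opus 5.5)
The plan is a Lebesgue density argument in local boundary coordinates, combined with a finite covering of the compact cylinder. The number $\vartheta\in(0,1)$ is a fixed density threshold, for instance $\vartheta=1/2$, and may be chosen independently of $J$.

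First, reduce to a compact piece away from the end faces. Since $\abs J>0$ and
\[
 \Sigma_s=\bigcup_{k}\partial\M\times[-s_0+1/k,\,s_0-1/k],
\]
continuity of measure gives some $\delta>0$ with $\abs{J\cap(\partial\M\times[-s_0+\delta,s_0-\delta])}>0$. Fix a finite atlas of boundary coordinate charts $\{(\mathcal U_i,\psi_i)\}$ of $\partial\M$. Bi-Lipschitz graph charts suffice; the $W^{2,\infty}$ charts used in Lemma~\ref{lem:cap} are one admissible choice. Subadditivity then produces an index $i$ with $\abs{J_\delta\cap(\mathcal U_i\times I_s)}>0$, where $J_\delta$ denotes the truncated set. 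In the chart $\psi_i\times\mathrm{id}$, the measure $dS_g\,ds$ has a density with respect to Lebesgue measure on $\R^{n-1}\times\R$ that is bounded above and below by positive constants. These bounds come from the uniform ellipticity and Lipschitz bounds of $g$ and the Lipschitz character of the chart. Consequently the image $\widetilde J=(\psi_i\times\mathrm{id})(J_\delta\cap(\mathcal U_i\times I_s))$ has positive Lebesgue measure in $\R^n$.

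Next, apply the Lebesgue density theorem to $\widetilde J$ with respect to cubes. Almost every point of $\widetilde J$ is a density point, so we may pick $p=(p',s_p)$ with $\abs{\widetilde J\cap(\Delta_\rho(p')\times(s_p-\rho,s_p+\rho))}/\abs{\Delta_\rho\times(s_p-\rho,s_p+\rho)}\to1$ as $\rho\to0$, where $\Delta_\rho(p')$ is the cube of side $2\rho$. Choose $\rho$ small enough that three things hold: this Lebesgue density ratio exceeds $1-\epsilon$; the closed product cube lies in $\psi_i(\mathcal U_i)\times(-s_0+\delta/2,s_0-\delta/2)$; and $\rho$ is below any smallness threshold required by later lemmas, for instance $r_*$ in Lemma~\ref{lem:cap}. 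Pulling back, set $\Delta=\psi_i^{-1}(\Delta_\rho(p'))$, $I_0=(s_p-\rho,s_p+\rho)$ and $Q=\Delta\times I_0$. Then $Q\Subset\Sigma_s$, and $Q$ is separated from $s=\pm s_0$ by at least $\delta/2$.

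The only point needing care is the transfer of the density back to the measure $dS_g\,ds$. If the density is bounded between $m$ and $M$, then the $dS_g\,ds$-measure of $Q\setminus J$ is at most $M\epsilon\abs{\widetilde Q}_{\mathrm{Leb}}\le (M\epsilon/m)\abs Q$, where $\widetilde Q$ is the image of $Q$. Choosing $\epsilon$ with $M\epsilon/m\le1-\vartheta$ gives $\abs S=\abs{J\cap Q}\ge\vartheta\abs Q>0$, which is \eqref{eq:density-cylinder}.

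I expect no genuine obstacle. The main thing to track is that the choices of $\Delta$, $I_0$ and $\vartheta$ are compatible with later uses: the chart should be the same flattening as in Lemma~\ref{lem:cap}, so that $S$ sits inside a patch $\Phi(B_{r/4}\cap\{y_d=0\})$. This is arranged by taking $\rho$ small relative to $r$ and centering the cap at $Q_0=(\psi_i^{-1}(p'),s_p)$.
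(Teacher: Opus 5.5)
Your proposal is correct and follows essentially the same route as the paper: pick a Lebesgue density point of $J$ inside a boundary chart and away from $s=\pm s_0$, use that $dS_g\,ds$ is comparable to Lebesgue measure in that chart, and shrink a coordinate cylinder until the density ratio is at least $\vartheta=1/2$. Your explicit transfer of the ratio through the density bounds $m\le w\le M$ (choosing $\epsilon$ with $M\epsilon/m\le 1-\vartheta$) just spells out the step the paper states as ``the density ratio therefore tends to one.''
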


\begin{proof}
Almost every point of $J$ is a density point. The end faces have zero $n$-dimensional boundary measure, so one may choose a density point $(x_0,s_*)$ with $\abs{s_*}<s_0$. In a $W^{2,\infty}$ boundary chart, $dS_g\,ds$ is equivalent to Lebesgue measure in $\R^n$, and coordinate cylinders form a regular differentiation basis. The density ratio therefore tends to one as the cylinder shrinks. Choose one compactly contained in $\Sigma_s$ for which the ratio is at least $1/2$.
\end{proof}

\begin{lemma}
\label{lem:low-value}
Let $S$ have finite positive measure and $h\in L^2(S)$. Define
\[
 S_h=\left\{z\in S:\abs{h(z)}^2\le\frac2{\abs S}\norm h_{L^2(S)}^2\right\}.
\]
Then $\abs{S_h}\ge\abs S/2$ and
\begin{equation}
 \esssup_{S_h}\abs h
 \le\left(\frac2{\abs S}\right)^{1/2}\norm h_{L^2(S)}.
 \label{eq:low-value}
\end{equation}
\end{lemma}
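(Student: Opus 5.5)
This is a Chebyshev (Markov) inequality argument. Set $M=\frac{2}{\abs S}\norm h_{L^2(S)}^2$, so that $S_h=\{z\in S:\abs{h(z)}^2\le M\}$. First dispose of the degenerate case: if $\norm h_{L^2(S)}=0$, then $h=0$ almost everywhere on $S$. Hence $S_h$ has full measure in $S$ and the essential supremum over it is zero, so both claims hold trivially. We may therefore assume $M>0$.

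For the measure bound, we estimate the complement $S\setminus S_h=\{z\in S:\abs{h(z)}^2>M\}$. Since $\abs h^2>M$ there,
\[
 M\,\abs{S\setminus S_h}\le\int_{S\setminus S_h}\abs h^2
 \le\norm h_{L^2(S)}^2=\frac{M\abs S}{2}.
\]
Dividing by $M$ gives $\abs{S\setminus S_h}\le\abs S/2$. Because $\abs S<\infty$, it follows that $\abs{S_h}=\abs S-\abs{S\setminus S_h}\ge\abs S/2$. Measurability of $S_h$ is immediate, since $h$ is measurable; if one works with an equivalence class, $S_h$ is defined up to a null set, which affects neither claim.

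The bound \eqref{eq:low-value} holds by construction. On $S_h$ we have $\abs h\le M^{1/2}=(2/\abs S)^{1/2}\norm h_{L^2(S)}$ pointwise, and hence also in the essential supremum.

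There is no real obstacle in this argument. The only points requiring care are the trivial case $h=0$ and the use of the finiteness of $\abs S$ when subtracting measures. The purpose of the lemma is to convert the $L^2$ boundary observation into the $\esssup$ form required by Lemma~\ref{lem:BZ}, on a set $E=S_h$ whose measure has the uniform lower bound $\abs S/2$.
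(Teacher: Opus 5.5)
Your proof is correct and uses the same Chebyshev estimate on $S\setminus S_h$ as the paper. The paper argues by contradiction, which avoids dividing by the threshold. You argue directly, so you first set aside the case $\norm h_{L^2(S)}=0$.
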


\begin{proof}
If $\abs{S\setminus S_h}>\abs S/2$, integration over the complement would give a strict lower bound larger than $\norm h_{L^2(S)}^2$, a contradiction.
\end{proof}

\begin{proof}[Proof of Theorem~\ref{thm:dir-spectral}]
Set
\[
 A_\Lambda=\left(\sum_{\lambda_j\le\Lambda}\abs{a_j}^2\right)^{1/2},
 \qquad
 O_J=\norm{\partial_{\nu_g}U_\Lambda}_{L^2(J)}.
\]
The assertion is immediate if $A_\Lambda=0$, so assume $A_\Lambda>0$.

By Lemmas~\ref{lem:cap} and \ref{lem:density-cylinder}, choose a lateral point $Q_c$, a scale $r>0$, and a coordinate cylinder $Q$ such that
\[
 Q\subset\Phi(B_{r/4}\cap\{y_d=0\})\subset\Gamma_r(Q_c),
 \qquad
 S=J\cap Q,
 \qquad
 \abs S\ge m_J>0.
\]
Apply Lemma~\ref{lem:low-value} to $h=\partial_{\nu_g}U_\Lambda|_S$. There exists $S_0\subset S$ with
\begin{equation}
 \abs{S_0}\ge\frac{\abs S}{2}\ge\frac{m_J}{2},
 \label{eq:Dir-low-measure}
\end{equation}
and
\begin{equation}
 \esssup_{S_0}\abs{\partial_{\nu_g}U_\Lambda}
 \le C_JO_J.
 \label{eq:Dir-low-sup}
\end{equation}

Pull the capped domain back by the metric-normal chart and apply Lemma~\ref{lem:BZ} to the pullback metric and density. In the region where the cap agrees with the product manifold, the coordinate-invariance identity \eqref{eq:conormal-invariance} identifies the normalized boundary datum with $(\partial_{\nu_g}U_\Lambda)\circ\Phi$; equivalently, the coefficient flux differs from it only by a uniformly positive bounded factor. After the equivalent change of boundary measure, the low-value estimate \eqref{eq:Dir-low-sup} is therefore exactly the datum required by Lemma~\ref{lem:BZ}. Lemma~\ref{lem:regularity}, applied to a compact neighborhood of the closure of the local capped domain, verifies the $W^{1,\infty}$ hypothesis of Lemma~\ref{lem:BZ}; ordinary interior regularity handles the artificial cap. There exist a fixed interior half-cylinder $K$, a slightly larger fixed local set $K_1$, and $\alpha_J\in(0,1)$ such that
\begin{equation}
 \sup_K\bigl(\abs{U_\Lambda}+\abs{\nabla_GU_\Lambda}\bigr)
 \le C_JO_J^{\alpha_J}
 \left(\sup_{K_1}\abs{\nabla_GU_\Lambda}\right)^{1-\alpha_J}.
 \label{eq:Dir-propagation-1}
\end{equation}
By Lemma~\ref{lem:regularity},
\begin{equation}
 \sup_{K_1}\abs{\nabla_GU_\Lambda}
 \le e^{C\sqrt\Lambda}A_\Lambda.
 \label{eq:Dir-global-gradient}
\end{equation}
Consequently,
\begin{equation}
 \norm{U_\Lambda}_{L^\infty(K)}
 \le C_JO_J^{\alpha_J}
 \left(e^{C\sqrt\Lambda}A_\Lambda\right)^{1-\alpha_J}.
 \label{eq:Dir-value-control}
\end{equation}

Choose $\omega\times I_1\Subset K$. Lemma~\ref{lem:BM-product} gives
\begin{equation}
 A_\Lambda
 \le Ce^{C\sqrt\Lambda}\norm{U_\Lambda}_{L^2(\omega\times I_1)}
 \le C_Ke^{C\sqrt\Lambda}\norm{U_\Lambda}_{L^\infty(K)}.
 \label{eq:Dir-BM}
\end{equation}
Combining \eqref{eq:Dir-value-control} and \eqref{eq:Dir-BM},
\[
 A_\Lambda
 \le C_Je^{C_J\sqrt\Lambda}
 O_J^{\alpha_J}A_\Lambda^{1-\alpha_J}.
\]
Divide by $A_\Lambda^{1-\alpha_J}$ and take the power $1/\alpha_J$:
\begin{equation}
 A_\Lambda\le C_Je^{C_J\sqrt\Lambda}O_J.
 \label{eq:Dir-closed}
\end{equation}
Squaring proves \eqref{eq:main-dir-spec}.
\end{proof}

\begin{corollary}
\label{cor:uniform-product}
Fix a relatively open boundary patch $\Gamma\Subset\partial\M$ whose closure lies in finitely many $W^{2,\infty}$ coordinate charts and fix a nondegenerate interval $I_*\Subset I_s$. For every $\eta>0$, there exists $C_\eta>0$ such that, for every $\Lambda\ge1$, every finite family $\{a_j\}_{\lambda_j\le\Lambda}$, and every measurable $F\subset\Gamma$ satisfying $S_g(F)\ge\eta$,
\begin{equation}
 \sum_{\lambda_j\le\Lambda}\abs{a_j}^2
 \le C_\eta e^{C_\eta\sqrt\Lambda}
 \int_{F\times I_*}\abs{\partial_{\nu_g}U_\Lambda}^2\dd S_g\dd s.
 \label{eq:uniform-product}
\end{equation}
The constant is independent of the geometry of $F$.
\end{corollary}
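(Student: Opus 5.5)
We rerun the proof of Theorem~\ref{thm:dir-spectral}, tracking where the constants depend on the observation set. The goal is for all constants to depend only on $\Gamma$, $I_*$ and the lower bound $\eta$. We do not pick a density cylinder adapted to a particular $J$. Instead we fix, once and for all, a finite cover of $\overline\Gamma$ together with a finite collection of times $s_k\in I_*$. The patches are chosen as $\Phi(B_{r/4}\cap\{y_d=0\})$ around lateral points $(x_i,s_k)$, using Lemma~\ref{lem:cap} with a single scale $r<r_*$. The time windows $I_{k}=(s_k-\delta,s_k+\delta)$ are chosen to cover $I_*$ and to stay inside $I_*$ up to a fixed fraction; if needed, we shrink $I_*$ slightly and use the monotonicity of the right-hand side in the set. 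The cover is finite and fixed, so the constants of Lemmas~\ref{lem:BZ}, \ref{lem:regularity} and \ref{lem:BM-product} for every patch are also fixed.

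Given $F$ with $S_g(F)\ge\eta$, pigeonhole yields a spatial patch $\Delta_i$ of the cover with $S_g(F\cap\Delta_i)\ge\eta/N$, where $N$ is the number of patches. Take $S=(F\cap\Delta_i)\times I_k$ for any fixed window $I_k\subset I_*$. Then $\abs S\ge 2\delta\eta/N=:m_\eta$, and this bound is independent of the shape of $F$. Next apply Lemma~\ref{lem:low-value} to $h=\partial_{\nu_g}U_\Lambda|_S$. This gives $S_0\subset S$ with $\abs{S_0}\ge m_\eta/2$ and
\[
\esssup_{S_0}\abs{\partial_{\nu_g}U_\Lambda}\le (4/m_\eta)^{1/2}\norm{\partial_{\nu_g}U_\Lambda}_{L^2(F\times I_*)}.
\]
Pulling back by the chart changes the measure of $S_0$ only by fixed factors. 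Therefore $S_0$ satisfies the measure bound $\abs{E}_{d-1}\ge m_0$ of Lemma~\ref{lem:BZ} with $m_0=c\,m_\eta$.

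The key point is the last sentence of Lemma~\ref{lem:BZ}: the constants $C(r,m_0)$ and $\alpha(r,m_0)$ are uniform over all measurable $E$ satisfying the measure lower bound. Combining this with Lemma~\ref{lem:regularity} on the fixed compact set $K_1$ associated with the chosen patch gives the analogue of \eqref{eq:Dir-value-control}. We then use Lemma~\ref{lem:BM-product} on a fixed $\omega_i\times I_{1,k}\Subset K_i$. Closing exactly as in \eqref{eq:Dir-closed} produces constants that depend only on the patch index, $r$ and $m_\eta$. We take the maximum of these over the finitely many patch indices, which yields $C_\eta$. Finally, $\int_{S}\le\int_{F\times I_*}$ completes \eqref{eq:uniform-product}.

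The main obstacle is checking that the hidden dependence in the earlier proof sits only in the density cylinder, and that nothing depends on the density point of $J$. Concretely, this means verifying two things. First, the cap domains $\mathcal O_{Q_0,r}$ of Lemma~\ref{lem:cap} must have uniform rescaled $W^{2,\infty}$ character over the finitely many centers. Second, each patch $\Phi(B_{r/4}\cap\{y_d=0\})$ must lie in $\Gamma_r(Q_c)$, so that $S_0$ is an admissible set $E$. Both follow from the finiteness of the cover and the uniformity statement in Lemma~\ref{lem:cap}. One further requirement is that the windows stay separated from $\pm s_0$, which holds because $I_*\Subset I_s$.
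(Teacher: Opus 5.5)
Your proposal is correct and follows essentially the same route as the paper: a fixed finite cover at admissible scales, and a pigeonhole step giving a measure lower bound $m_\eta$ that depends only on $\eta$ and the cover. It then uses the low-value selection, the uniformity of the constants in Lemma~\ref{lem:BZ} over all sets with a prescribed measure lower bound, and a maximum over the finitely many local constants. The only cosmetic difference is that you pigeonhole over spatial patches and use the product structure of $F\times I_*$ to pick a fixed time window, whereas the paper pigeonholes directly over space--time cylinders; just size the spatial patches and windows so that each product $\Delta_i\times I_k$ lies inside the corresponding flattened patch $\Phi(B_{r/4}\cap\{y_d=0\})$.
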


\begin{proof}
Cover $\Gamma\times I_*$ by finitely many sufficiently small boundary cylinders $Q_1,\ldots,Q_N$, all at admissible scales for Lemma~\ref{lem:BZ}. Since
\[
 \abs{F\times I_*}\ge\eta\abs{I_*},
\]
at least one cylinder satisfies
\[
 \abs{(F\times I_*)\cap Q_\ell}
 \ge\frac{\eta\abs{I_*}}{N}=:m_\eta>0.
\]
After the low-value selection, at least $m_\eta/2$ measure remains. Repeat the proof of Theorem~\ref{thm:dir-spectral} in each candidate cylinder and take the maximum of the finitely many local constants. All remaining constants depend only on the fixed cover, $\eta$, and the geometry of $(\M,g)$.
\end{proof}

\section{Proof of Theorem~\ref{thm:neu-spectral}: the Neumann boundary spectral inequality}
\label{sec:neumann}

In this section we prove the Neumann boundary spectral inequality using the geometric and spectral estimates developed in Section~\ref{sec:dirichlet}. The additional difficulty is that the Neumann continuation estimate propagates the full boundary gradient, whereas the observation consists only of boundary values. Lemmas~\ref{lem:simplex}--\ref{lem:positive-measure-jet} recover the tangential first-order jet on a positive-measure subset, while the homogeneous Neumann condition supplies the normal component. A low-value point then determines the remaining additive constant, including the zero mode, and local spectral concentration completes the proof.

\begin{lemma}
\label{lem:simplex}
Let $G\subset B_\rho(x)\subset\R^q$ be measurable and satisfy
\[
 \abs G\ge\gamma\rho^q.
\]
Then there are $y_0,\ldots,y_q\in G$ such that the matrix
\[
 M=\begin{pmatrix}
 (y_1-y_0)^T\\
 \vdots\\
 (y_q-y_0)^T
 \end{pmatrix}
\]
is invertible and
\begin{equation}
 \abs{\det M}\ge c_{q,\gamma}\rho^q,
 \qquad
 \norm{M^{-1}}\le C_{q,\gamma}\rho^{-1}.
 \label{eq:simplex-bounds}
\end{equation}
\end{lemma}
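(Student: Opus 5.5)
We choose the points one at a time, keeping at each stage a quantitative lower bound on the volume of the simplex built so far. First rescale: translating and dilating by $\rho$ maps $B_\rho(x)$ onto $B_1(0)$ and $G$ onto a set $G'$ with $\abs{G'}\ge\gamma$. Under this map $\det M$ scales by $\rho^q$ and $M^{-1}$ by $\rho^{-1}$. It therefore suffices to treat $\rho=1$ and find $y_0,\dots,y_q\in G'$ with $\abs{\det M}\ge c_{q,\gamma}$ and $\norm{M^{-1}}\le C_{q,\gamma}$.

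The selection is inductive. Pick any $y_0\in G'$. Suppose $y_0,\dots,y_k$ have been chosen, with $k<q$, so that the $k$-dimensional volume of the parallelepiped spanned by $y_1-y_0,\dots,y_k-y_0$ is at least $c_k>0$. Let $H_k$ be the affine $k$-plane through these points. The slab
\[
 N_\delta(H_k)\cap B_1=\{z\in B_1:\dist(z,H_k)<\delta\}
\]
has Lebesgue measure at most $C_q\delta^{q-k}\le C_q\delta$ for $\delta\le1$. Choose $\delta_k=\gamma/(2C_q)$. Then $G'\setminus N_{\delta_k}(H_k)$ has measure at least $\gamma/2>0$, so we may pick $y_{k+1}$ in it. The new $(k+1)$-volume is the old volume times $\dist(y_{k+1},H_k)$, hence is at least $c_k\delta_k$.

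After $q$ steps this gives $\abs{\det M}\ge\prod_k\delta_k=:c_{q,\gamma}>0$. Each row of $M$ has length at most $2$, because all the points lie in $B_1$. Cramer's rule, $M^{-1}=\operatorname{adj}(M)/\det M$, then bounds every cofactor by $C_q 2^{q-1}$, so $\norm{M^{-1}}\le C_q 2^{q-1}/c_{q,\gamma}=:C_{q,\gamma}$. Undoing the scaling gives \eqref{eq:simplex-bounds}.

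The only step needing care is the slab-measure estimate $\abs{N_\delta(H_k)\cap B_1}\le C_q\delta$, uniformly over all affine planes of dimension $k<q$. It suffices to treat $k=q-1$, since every lower-dimensional plane lies in some hyperplane and its neighborhood is contained in that hyperplane's neighborhood. A slab of width $2\delta$ meets $B_1$ in a set of measure at most $2\delta\,\omega_{q-1}$, which is the required bound.

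If the points are to be density points or satisfy some further property, we simply pick $y_{k+1}$ from the full-measure subset of $G'\setminus N_{\delta_k}(H_k)$ with that property.
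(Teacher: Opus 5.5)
Your proof is correct and follows essentially the same route as the paper's. Both rescale to the unit ball, choose the points inductively outside a $\delta_k$-neighborhood of the current affine span (which is possible because that neighborhood has measure at most half of $\abs{G}$), obtain $\abs{\det M}\ge\prod_k\delta_k$ from the base-times-height volume formula, and bound $M^{-1}$ through the adjugate. Your reduction of the slab estimate to the hyperplane case, giving the explicit bound $2\omega_{q-1}\delta$, just makes the paper's $C_q\delta^{q-k}$ bound more explicit.
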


\begin{proof}
Translate and scale to $x=0$ and $\rho=1$. Select the points inductively. Suppose $y_0,\ldots,y_k$ have been chosen and their affine span has dimension $k<q$. The $\delta$-neighborhood of any such affine plane inside $B_1$ has volume at most
\[
 C_q\delta^{q-k}.
\]
Choose $\delta_k=\delta_k(q,\gamma)>0$ so that this volume is less than $\abs G/2$, and select $y_{k+1}\in G$ at distance at least $\delta_k$ from the preceding affine span. The resulting simplex has volume bounded below by a positive constant depending only on $q$ and $\gamma$. The determinant estimate follows. The inverse estimate follows from the adjugate formula. Scaling restores the powers of $\rho$.
\end{proof}

\begin{lemma}
\label{lem:positive-measure-jet}
Let $Q\subset\R^q$ be a fixed cube, let $E\subset Q$ be measurable with $\abs E>0$, and let $0<\beta<1$. There exist $E^*\subset E$, $\abs{E^*}>0$, and $C_E>0$ such that every $f\in C^{1,\beta}(Q)$ satisfies
\begin{equation}
 \esssup_{E^*}\abs{\nabla f}
 \le C_E\norm f_{L^2(E)}^{\eta_0}
 \left(
  \norm{\nabla f}_{L^\infty(Q)}
  +[\nabla f]_{C^{0,\beta}(Q)}
 \right)^{1-\eta_0},
 \label{eq:positive-measure-jet}
\end{equation}
where
\begin{equation}
 \eta_0=\frac{\beta}{1+q/2+\beta}\in(0,1).
 \label{eq:eta-zero}
\end{equation}
\end{lemma}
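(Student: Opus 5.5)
The plan is to combine a uniform Lebesgue-density selection with the low-value selection of Lemma~\ref{lem:low-value}, the quantitative simplex of Lemma~\ref{lem:simplex}, and a first-order Taylor expansion. The expansion converts values of $f$ into a bound on $\nabla f$ at the base point, and optimizing the radius then produces the exponent $\eta_0$.

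\emph{Step 1: uniform density set.} For a.e.\ $z\in E$ we have $\abs{E\cap B_\rho(z)}/\abs{B_\rho}\to1$ as $\rho\downarrow0$. Set
\[
E_k=\{z\in E:\ \dist(z,\partial Q)>1/k,\ \abs{E\cap B_\rho(z)}\ge\tfrac12\abs{B_\rho}\ \text{for all }\rho\le1/k\}.
\]
These sets increase, and their union has full measure in $E$ (using that $E$ lies in the open cube up to a null set). By continuity of measure some $E^*:=E_k$ has positive measure. Put $\rho_0=1/k$ and $\gamma=\tfrac12\abs{B_1}$. Then for every $z\in E^*$ and every $\rho\le\rho_0$ we have $B_\rho(z)\subset Q$ and $\abs{E\cap B_\rho(z)}\ge\gamma\rho^q$. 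I expect this uniformization to be the main point: it is what makes the constant independent of $z$ and of $f$.

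\emph{Step 2: pointwise gradient bound at scale $\rho$.} Write $\varepsilon=\norm f_{L^2(E)}$, $H=[\nabla f]_{C^{0,\beta}(Q)}$ and $N=\norm{\nabla f}_{L^\infty(Q)}+H$. Fix $z\in E^*$ and $\rho\le\rho_0$, and let $S=E\cap B_\rho(z)$. Applying Lemma~\ref{lem:low-value} to $f|_S$ gives $G\subset S$ with
\[
\abs G\ge\tfrac{\gamma}{2}\rho^q,\qquad \abs f\le (2/\abs S)^{1/2}\varepsilon\le C\rho^{-q/2}\varepsilon \ \text{on } G.
\]
Since $f$ is continuous, this bound can be arranged to hold pointwise at every point of $G$ after discarding a null set. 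Lemma~\ref{lem:simplex} then yields $y_0,\dots,y_q\in G$ with $\norm{M^{-1}}\le C\rho^{-1}$. Since $B_\rho(z)$ is convex and contained in $Q$, Taylor's formula gives
\[
f(y_i)-f(y_0)=\nabla f(z)\cdot(y_i-y_0)+O(H\rho^{1+\beta}).
\]
Inverting $M$ then gives
\[
\abs{\nabla f(z)}\le C\rho^{-1}\bigl(\rho^{-q/2}\varepsilon+H\rho^{1+\beta}\bigr)=C\bigl(\rho^{-1-q/2}\varepsilon+H\rho^{\beta}\bigr).
\]

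\emph{Step 3: optimizing the radius.} Choose $\rho=(\varepsilon/N)^{1/(1+q/2+\beta)}$, using $N$ in place of $H$ so that the case $H=0$ causes no trouble. If this $\rho$ is at most $\rho_0$, both terms in Step~2 are bounded by
\[
C\varepsilon^{\beta/(1+q/2+\beta)}N^{1-\beta/(1+q/2+\beta)}=C\varepsilon^{\eta_0}N^{1-\eta_0},
\]
which is \eqref{eq:positive-measure-jet}. Otherwise $\varepsilon/N>\rho_0^{1+q/2+\beta}$, and then trivially
\[
\abs{\nabla f(z)}\le N=N^{1-\eta_0}N^{\eta_0}\le\rho_0^{-\eta_0(1+q/2+\beta)}\varepsilon^{\eta_0}N^{1-\eta_0}.
\]
If $N=0$ there is nothing to prove. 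Taking the supremum over $z\in E^*$ completes the argument, with $C_E$ depending only on $q$, $\beta$ and $\rho_0$, hence only on $E$.
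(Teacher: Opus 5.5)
Your proposal is correct and follows essentially the same route as the paper's proof. Both arguments use a uniform-density subset $E^*$ kept away from $\partial Q$ and a low-value subset of $E\cap B_\rho(z)$ of measure $\gtrsim\rho^q$; you get the latter from Lemma~\ref{lem:low-value}, where the paper uses a Chebyshev threshold. Both then apply the simplex of Lemma~\ref{lem:simplex}, bound the Taylor remainder by $O([\nabla f]_{C^{0,\beta}}\rho^{1+\beta})$, and balance the radius, falling back on the trivial bound when the balancing radius exceeds $\rho_0$.

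The one case you do not address is $\norm f_{L^2(E)}=0$ with $N>0$, where your choice $\rho=0$ is inadmissible. There, letting $\rho\downarrow0$ in the Step~2 bound gives $\nabla f(z)=0$, exactly as the paper does.
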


\begin{proof}

Since $\partial Q$ has measure zero and
\[
 E\setminus\partial Q
 =\bigcup_{m\ge1}\{x\in E:\dist(x,\partial Q)>1/m\},
\]
there are a positive-measure set $E'\subset E$ and $d_E>0$ such that
\[
 \dist(E',\partial Q)\ge d_E.
\]
For $m\ge1$, define
\[
 E_m=\left\{x\in E':
 \abs{E\cap B_\rho(x)}\ge\frac{\omega_q}{2}\rho^q
 \text{ for every rational }0<\rho<1/m\right\}.
\]
The Lebesgue density theorem gives
\[
 \abs{E'\setminus\bigcup_{m\ge1}E_m}=0.
\]
Hence some $E_m$ has positive measure. By continuity of $\rho\mapsto\abs{E\cap B_\rho(x)}$, the same estimate holds for every real $0<\rho<1/m$. Choose such an $m$, put $E^*=E_m$, and take
\[
 r_E<\min\{1/m,d_E/2\}.
\]
Then there exists $\kappa>0$ such that
\begin{equation}
 \abs{E\cap B_\rho(x)}\ge\kappa\rho^q
 \quad(x\in E^*,\ 0<\rho<r_E).
 \label{eq:uniform-density-neumann}
\end{equation}
All balls used below lie in $Q$.

Fix $x\in E^*$ and $0<\rho<r_E$. Define
\[
 G_\rho=
 \left\{y\in E\cap B_\rho(x):
 \abs{f(y)}\le C\kappa^{-1/2}\rho^{-q/2}\norm f_{L^2(E)}
 \right\}.
\]
For a sufficiently large absolute $C$, Chebyshev's inequality and \eqref{eq:uniform-density-neumann} give
\[
 \abs{G_\rho}\ge c\kappa\rho^q.
\]
By Lemma~\ref{lem:simplex}, there are $y_0,\ldots,y_q\in G_\rho$ such that the matrix $M_\rho$ of differences satisfies
\begin{equation}
 \norm{M_\rho^{-1}}\le C_\kappa\rho^{-1}.
 \label{eq:simplex-inverse}
\end{equation}

For $j=1,\ldots,q$,
\[
 f(y_j)-f(y_0)
 =\nabla f(x)\cdot(y_j-y_0)+R_j,
 \qquad
 \abs{R_j}\le C[\nabla f]_{C^{0,\beta}(Q)}\rho^{1+\beta}.
\]
Solving the resulting linear system with \eqref{eq:simplex-inverse} yields
\begin{equation}
 \abs{\nabla f(x)}
 \le C_E\left(
  \rho^{-1-q/2}\norm f_{L^2(E)}
  +[\nabla f]_{C^{0,\beta}(Q)}\rho^\beta
 \right).
 \label{eq:jet-balance}
\end{equation}

Set
\[
 A=\norm f_{L^2(E)},
 \qquad
 B=\norm{\nabla f}_{L^\infty(Q)}+[\nabla f]_{C^{0,\beta}(Q)}.
\]
If $A=0$, then \eqref{eq:jet-balance} gives
\[
 \abs{\nabla f(x)}\le CB\rho^\beta
\]
for every $0<\rho<r_E$, and letting $\rho\downarrow0$ yields $\nabla f(x)=0$ on $E^*$. If $B=0$, the conclusion is immediate. Assume $A,B>0$. Balance the two terms in \eqref{eq:jet-balance} with
\[
 \rho\asymp(A/B)^{1/(1+q/2+\beta)}
\]
when this radius is at most $r_E$. This gives \eqref{eq:positive-measure-jet} with \eqref{eq:eta-zero}. If the balancing radius exceeds $r_E$, then $A/B$ has a fixed positive lower bound, and the trivial estimate $\abs{\nabla f}\le B$ gives the same conclusion after increasing the constant.
\end{proof}

\begin{proof}[Proof of Theorem~\ref{thm:neu-spectral}]
Set
\[
 B_\Lambda=\left(\sum_{\mu_j\le\Lambda}\abs{b_j}^2\right)^{1/2},
 \qquad
 O_J=\norm{W_\Lambda}_{L^2(J)}.
\]
The assertion is immediate when $B_\Lambda=0$, so assume $B_\Lambda>0$.

By Lemmas~\ref{lem:cap} and \ref{lem:density-cylinder}, choose a lateral point $Q_c$, a scale $r$, and a coordinate cylinder $Q$ such that
\[
 Q\subset\Phi(B_{r/4}\cap\{y_d=0\}),
 \qquad
 S=J\cap Q,
 \qquad
 \abs S>0.
\]
In flattened boundary coordinates let
\[
 f=W_\Lambda|_{\Sigma_s}.
\]
The lateral boundary has $q=n$ tangential variables: $n-1$ spatial tangential variables and the auxiliary variable $s$. The coordinate Lebesgue measure and $dS_g\,ds$ are mutually comparable. Apply Lemma~\ref{lem:positive-measure-jet} to $f$ and $E=S$. There exists a fixed positive-measure set $S^*\subset S$ such that
\begin{align}
 \esssup_{S^*}\abs{\nabla_\tau f}
 &\le C_JO_J^{\eta_0}
 \left(
  \norm{\nabla_\tau f}_{L^\infty(Q)}
  +[\nabla_\tau f]_{C^{0,\beta}(Q)}
 \right)^{1-\eta_0}
 \notag\\
 &\le C_JO_J^{\eta_0}
 \left(e^{C\sqrt\Lambda}B_\Lambda\right)^{1-\eta_0},
 \label{eq:Neumann-tangential-jet}
\end{align}
where the second inequality follows from Lemma~\ref{lem:regularity}.

By the homogeneous Neumann condition and \eqref{eq:Neu-reflection-gradient}, the full boundary gradient is equivalent to this tangential jet. Applying the Neumann part of Lemma~\ref{lem:BZ}, and then Lemma~\ref{lem:regularity}, gives
\begin{equation}
 \sup_{\Phi(B_{r/2}^+)}\abs{\nabla_GW_\Lambda}
 \le C_JO_J^\delta
 \left(e^{C\sqrt\Lambda}B_\Lambda\right)^{1-\delta},
 \qquad
 \delta=\alpha_J\eta_0\in(0,1).
 \label{eq:Neumann-gradient}
\end{equation}

It remains to fix the additive constant, including the Neumann zero mode. Apply Lemma~\ref{lem:low-value} to $W_\Lambda|_{S^*}$. Choose $z_0$ outside the resulting null exceptional set such that
\begin{equation}
 \abs{W_\Lambda(z_0)}
 \le\left(\frac2{\abs{S^*}}\right)^{1/2}
 \norm{W_\Lambda}_{L^2(S^*)}
 \le C_JO_J.
 \label{eq:Neumann-low-point}
\end{equation}
The continuous representative supplied by Lemma~\ref{lem:regularity} makes this point choice legitimate. Let
\[
 K\Subset\Phi(B_{r/4}^+).
\]
For every $y\in K$, Lemma~\ref{lem:cap} gives a curve from $z_0$ to $y$ of length at most $Cr$ contained in $\Phi(B_{r/2}^+)$. Integrating the gradient along the curve and using \eqref{eq:Neumann-gradient},
\begin{equation}
 \norm{W_\Lambda}_{L^\infty(K)}
 \le C_JO_J
 +C_JO_J^\delta
 \left(e^{C\sqrt\Lambda}B_\Lambda\right)^{1-\delta}.
 \label{eq:Neumann-value-pre}
\end{equation}
The trace theorem and Lemma~\ref{lem:energy} imply
\[
 O_J
 \le\norm{W_\Lambda}_{L^2(\Sigma_s)}
 \le Ce^{C\sqrt\Lambda}B_\Lambda.
\]
Hence the first term in \eqref{eq:Neumann-value-pre} is absorbed into the second and
\begin{equation}
 \norm{W_\Lambda}_{L^\infty(K)}
 \le C_JO_J^\delta
 \left(e^{C\sqrt\Lambda}B_\Lambda\right)^{1-\delta}.
 \label{eq:Neumann-value}
\end{equation}
Choose $\omega\times I_1\Subset K$. By Lemma~\ref{lem:BM-product},
\[
 B_\Lambda
 \le C_Je^{C_J\sqrt\Lambda}
 \norm{W_\Lambda}_{L^2(\omega\times I_1)}
 \le C_Je^{C_J\sqrt\Lambda}O_J^\delta B_\Lambda^{1-\delta}.
\]
Divide by $B_\Lambda^{1-\delta}$, square, and rename the constant to obtain \eqref{eq:main-neu-spec}.
\end{proof}

\begin{remark}
\label{rem:Neumann-mechanism}
The Neumann part of Lemma~\ref{lem:BZ} propagates the gradient, whereas the observation in Theorem~\ref{thm:neu-spectral} consists of function values. Lemma~\ref{lem:positive-measure-jet} supplies the missing tangential gradient information on a positive-measure subset. A single low-value point is then used to recover the additive constant, including the Neumann zero mode.
\end{remark}

\section{Proof of Theorem~\ref{thm:heat-observability}: measurable-boundary heat observability}
\label{sec:heat}

In this section we prove the measurable-boundary heat observability inequality from the Dirichlet spectral estimate. Since a fixed-time conormal-derivative spectral inequality is false in general, we recover the full time jet of the low-frequency boundary heat trace and relate it to the conormal trace of an auxiliary elliptic extension through Lemma~\ref{lem:time-jet-identity}. Proposition~\ref{prop:transfer} transfers the measurable heat observation to the elliptic trace. A low--high-frequency decomposition and frequency optimization yield the local interpolation estimate of Proposition~\ref{prop:one-step}. Finally, a density-point construction and a weighted telescoping argument prove Theorem~\ref{thm:heat-observability}.

From this point onward only the Dirichlet estimate of Corollary~\ref{cor:uniform-product} is used. Let
\[
 A=A_D=-\Delta_{g,D},
\]
and define
\[
 P_\Lambda f=\sum_{\lambda_j\le\Lambda}(f,\phi_j)_{L^2(\M)}\phi_j,
 \qquad
 Q_\Lambda=\Id-P_\Lambda.
\]
The heat solution is $u(t)=e^{-tA}u_0$. For an analytic function $h$, we refer to the sequence $(h^{(k)}(\tau))_{k\ge0}$ as its time jet at $\tau$.

In general one cannot assert a fixed-time estimate of the form
\[
 \norm v_{L^2(\M)}^2
 \le Ce^{C\sqrt\Lambda}
 \norm{\partial_{\nu_g}v}_{L^2(F)}^2,
 \qquad v\in\Ran P_\Lambda.
\]
Indeed, this already fails on the Euclidean unit disk. Dirichlet eigenfunctions may be written
\[
 \phi_{m,k}(r,\theta)=c_{m,k}J_m(j_{m,k}r)\cos(m\theta),
\]
where $j_{m,k}$ is the $k$th positive zero of $J_m$. Their boundary normal derivatives have the form
\[
 \partial_\nu\phi_{m,k}(1,\theta)=d_{m,k}\cos(m\theta),
 \qquad d_{m,k}\ne0.
\]
For two distinct radial indices $k_1\ne k_2$, the nonzero function
\[
 v=d_{m,k_2}\phi_{m,k_1}-d_{m,k_1}\phi_{m,k_2}
\]
has identically zero normal derivative on the entire boundary. The factors $\cosh(\sqrt\lambda s)$ in the auxiliary extension and $e^{-\lambda t}$ in the heat flow distinguish the two eigenvalues. Lemma~\ref{lem:time-jet-identity} is the bridge between these two modal-separation mechanisms.

\begin{lemma}
\label{lem:analytic-real}
Let $h$ be real analytic on $[a,a+L]$ and assume
\begin{equation}
 \abs{h^{(k)}(t)}
 \le M k!(2\rho L)^{-k},
 \qquad k\ge0,
 \quad t\in[a,a+L].
 \label{eq:analytic-deriv-bound}
\end{equation}
If $E\subset[a,a+L]$ is measurable with $\abs E>0$, there are
\[
 N=N(\rho,\abs E/L),
 \qquad
 \gamma=\gamma(\rho,\abs E/L)\in(0,1),
\]
such that
\begin{equation}
 \norm h_{L^\infty(a,a+L)}
 \le N\left(L^{-1}\int_E\abs{h(t)}\dd t\right)^\gamma M^{1-\gamma}.
 \label{eq:analytic-real-propagation}
\end{equation}
\end{lemma}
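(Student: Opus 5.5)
The plan is to reduce to the normalized case $a=0$, $L=1$ and then apply a Remez/Nadirashvili-type propagation of smallness for analytic functions from measurable sets, in the form of Vessella's estimate. Set $\tilde h(\tau)=h(a+L\tau)$ on $[0,1]$. Then \eqref{eq:analytic-deriv-bound} becomes $\abs{\tilde h^{(k)}(\tau)}\le Mk!(2\rho)^{-k}$. The set $\tilde E=(E-a)/L$ has measure $\abs E/L$, and
\[
 \int_{\tilde E}\abs{\tilde h}\dd\tau=L^{-1}\int_E\abs h\dd t .
\]
So it suffices to prove
\[
 \norm{\tilde h}_{L^\infty(0,1)}\le N\Bigl(\int_{\tilde E}\abs{\tilde h}\Bigr)^\gamma M^{1-\gamma}
\]
with $N,\gamma$ depending only on $\rho$ and $m:=\abs{\tilde E}$.

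\textbf{Holomorphic extension.} By the Cauchy bound, the Taylor series of $\tilde h$ at every $\tau_0\in[0,1]$ converges in the disk of radius $2\rho$. It is dominated there by $M\sum_k(\abs{z-\tau_0}/(2\rho))^k$. Hence $\tilde h$ extends holomorphically to the neighborhood $D_\rho=\{z:\dist(z,[0,1])<\rho\}$ with $\abs{\tilde h}\le 2M$ on $D_\rho$; the extensions from different centers agree by uniqueness. Normalize $H=\tilde h/(2M)$, so that $\abs H\le1$ on $D_\rho$.

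\textbf{From $L^1$ to a pointwise small set.} Apply Lemma~\ref{lem:low-value}, or Chebyshev with $L^1$ in place of $L^2$, to obtain $E_0\subset\tilde E$ with $\abs{E_0}\ge m/2$ and
\[
 \sup_{E_0}\abs H\le \frac{2}{m}\,\varepsilon,\qquad \varepsilon:=\frac{1}{2M}\int_{\tilde E}\abs{\tilde h}.
\]
It therefore remains to show $\sup_{[0,1]}\abs H\le C(\sup_{E_0}\abs H)^{\gamma}$ for bounded holomorphic $H$ on $D_\rho$ and a set $E_0$ of measure at least $m/2$.

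\textbf{Main step: propagation of smallness from a set of positive measure.} This is the key point, and I would argue by harmonic measure. Let $\Omega=D_\rho\setminus E_0$ (taking $E_0$ compact after inner regularity, which is harmless). The function $\log\abs H$ is subharmonic and satisfies $\log\abs H\le0$ on $\partial D_\rho$ and $\log\abs H\le\log\eta$ on $E_0$, where $\eta=\sup_{E_0}\abs H$. By the two-constants theorem,
\[
 \log\abs{H(z)}\le\omega(z,E_0,\Omega)\log\eta .
\]
To conclude I need a uniform lower bound $\omega(z,E_0,\Omega)\ge\gamma(\rho,m)>0$ for $z\in[0,1]$. This follows from the capacity lower bound for linear sets: logarithmic capacity satisfies $\operatorname{cap}(E_0)\ge\abs{E_0}/4\ge m/8$. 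A comparison with the equilibrium potential of $E_0$ inside the fixed domain $D_\rho$ then gives the harmonic-measure bound. Equivalently, I could invoke the classical Remez inequality for analytic functions, as in Nadirashvili or Vessella's lemma, which states exactly that
\[
 \sup_{[0,1]}\abs H\le C\,\eta^{\gamma}\Bigl(\sup_{D_\rho}\abs H\Bigr)^{1-\gamma},\qquad \gamma=\gamma(\rho,m).
\]
The uniformity of $\gamma$ with respect to the shape of $E_0$, as opposed to its measure, is the delicate point; it comes from the fact that the capacity of a linear set is bounded below by its length.

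\textbf{Conclusion.} Unwinding the normalizations gives
\[
 \norm{\tilde h}_{L^\infty(0,1)}\le 2M\Bigl(\frac{2}{m}\cdot\frac{\int_{\tilde E}\abs{\tilde h}}{2M}\Bigr)^{\gamma}\le N\Bigl(\int_{\tilde E}\abs{\tilde h}\Bigr)^{\gamma}M^{1-\gamma},
\]
and rescaling back to $[a,a+L]$ yields \eqref{eq:analytic-real-propagation}.
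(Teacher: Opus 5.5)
Your argument is correct, but it does not follow the paper's route. The paper gives no argument of its own: it identifies the statement as the translated and rescaled form of Lemma~13 of Apraiz--Escauriaza--Wang--Zhang (a Vessella--Nadirashvili type estimate), and notes only that their normalized average over $E$ and the quantity $L^{-1}\int_E\abs h$ differ by a factor depending on $\abs E/L$. You instead reprove the cited lemma. After rescaling to $[0,1]$, you complexify to the $\rho$-neighbourhood $D_\rho$ with $\abs{\tilde h}\le 2M$. The gluing is legitimate: two disks of radius $\rho$ centred on $[0,1]$ that meet do so in a convex set containing a nondegenerate subinterval of $[0,1]$, where both Taylor series equal $\tilde h$. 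You then pass from the $L^1$ average to a pointwise bound on a compact $K\subset\tilde E$ with $\abs K\gtrsim m$ by Chebyshev, and finish with the two-constants inequality $\log\abs H\le\omega(\cdot,K,D_\rho\setminus K)\log\eta$. The citation buys brevity. Your route buys transparency: it shows exactly why $N,\gamma$ depend on $E$ only through $\abs E/L$, and it yields explicit constants $N=2^{1-\gamma}(2/m)^\gamma$ and $\gamma\ge c(\rho)/\log(C(\rho)/m)$. The one step you should write out rather than gesture at is the uniform harmonic-measure bound. Write $\omega(z)=V^{-1}\int_K g_{D_\rho}(z,w)\dd\mu(w)$, where $\mu$ is the Green equilibrium probability measure of $K$ and $V$ its Green energy. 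Since $g_{D_\rho}(z,w)\le\log\abs{z-w}^{-1}+\log(1+\rho)$ for $w\in[0,1]$, testing with the logarithmic equilibrium measure gives $V\le\log(1/\operatorname{cap}K)+\log(1+\rho)\le\log\bigl(4(1+\rho)/\abs K\bigr)$ by P\'olya's inequality $\operatorname{cap}K\ge\abs K/4$. Together with $g_{D_\rho}(z,w)\ge c(\rho)>0$ for $z\ne w$ in $[0,1]$, this gives $\omega\ge c(\rho)/V$ on $[0,1]\setminus K$. Two small points remain. First, since $\log\abs H$ is upper semicontinuous on all of $D_\rho$, the boundary inequality on $K$ holds at every point, so irregular points of $K$ cause no difficulty in the two-constants step. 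Second, the case $\int_{\tilde E}\abs{\tilde h}=0$ should be dispatched separately: $\tilde h$ then vanishes on a set of positive measure, hence identically.
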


\begin{proof}
This is the translated and rescaled form of \cite[Lemma~13]{ApraizEtAl}. Their normalized average is equivalent to the one in \eqref{eq:analytic-real-propagation}, with constants depending on $\abs E/L$.
\end{proof}

\begin{lemma}
\label{lem:analytic-jet}
Let $h$ be analytic in the complex disk $D(\tau,4r)$ and satisfy $\abs{h(z)}\le M$. Let $E\subset(\tau-r,\tau+r)$ satisfy $\abs E\ge\kappa r$. Then there are $C_\kappa>0$, $c_\kappa\in(0,1)$, and $\theta_\kappa\in(0,1)$ such that, for every $k\ge0$,
\begin{equation}
 \abs{h^{(k)}(\tau)}
 \le C_\kappa
 \left(r^{-1}\int_E\abs{h(t)}^2\dd t\right)^{\theta_\kappa/2}
 M^{1-\theta_\kappa}
 \frac{k!}{(c_\kappa r)^k}.
 \label{eq:analytic-jet}
\end{equation}
\end{lemma}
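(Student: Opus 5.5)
The plan is to reduce \eqref{eq:analytic-jet} to Lemma~\ref{lem:analytic-real} followed by Cauchy estimates.

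\textbf{Step 1: derivative bounds on the real interval.} By the Cauchy estimates on $D(\tau,4r)$, at every real $t\in[\tau-2r,\tau+2r]$ the disk $D(t,2r)$ lies in $D(\tau,4r)$. Hence
\[
 \abs{h^{(k)}(t)}\le Mk!(2r)^{-k}
\]
for such $t$. Set $a=\tau-r$ and $L=2r$. Then $[a,a+L]=[\tau-r,\tau+r]$ and $(2\rho L)^{-k}=(2r)^{-k}$ with $\rho=1/2$. So \eqref{eq:analytic-deriv-bound} holds with this $M$ and a fixed $\rho$.

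\textbf{Step 2: propagation from $E$.} Since $\abs E/L\ge\kappa/2$, Lemma~\ref{lem:analytic-real} gives
\[
 \norm h_{L^\infty(\tau-r,\tau+r)}\le N\Bigl((2r)^{-1}\int_E\abs h\,\dd t\Bigr)^{\gamma}M^{1-\gamma}.
\]
Here $N,\gamma$ depend only on $\kappa$. If the lemma's constants are monotone only in the ratio, first replace $E$ by a subset of measure exactly $\kappa r$. By Cauchy--Schwarz,
\[
 r^{-1}\int_E\abs h\,\dd t\le \Bigl(\frac{\abs E}{r}\Bigr)^{1/2}\Bigl(r^{-1}\int_E\abs h^2\,\dd t\Bigr)^{1/2},
\]
and $\abs E\le 2r$. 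Together these give
\[
 m:=\norm h_{L^\infty(\tau-r,\tau+r)}\le C_\kappa\, O^{\gamma/2}M^{1-\gamma},
 \qquad O:=r^{-1}\int_E\abs h^2\,\dd t.
\]

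\textbf{Step 3: from the interval to the jet at $\tau$.} This is the main obstacle, because the smallness is known only on a real segment while Cauchy's formula needs a complex circle. I will use a two-constants (Hadamard-type) estimate. Let $\Omega=D(\tau,2r)\setminus[\tau-r,\tau+r]$, where $\abs h\le M$ on the outer circle and $\abs h\le m$ on the slit. The harmonic measure $\omega$ of the slit is continuous and positive in $\Omega$. It is bounded below by some $\theta_0>0$ on the circle $\abs{z-\tau}=r/2$, where $\theta_0$ is an absolute constant by scaling. Since $\log\abs h$ is subharmonic, on that circle
\[
 \abs{h}\le m^{\omega}M^{1-\omega}\le m^{\theta_0}M^{1-\theta_0}.
\]
This uses $m\le M$, so that the map $\omega\mapsto m^\omega M^{1-\omega}$ is decreasing. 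The Cauchy formula on the circle of radius $r/2$ then gives
\[
 \abs{h^{(k)}(\tau)}\le k!(r/2)^{-k}m^{\theta_0}M^{1-\theta_0}.
\]

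\textbf{Step 4: combining.} Inserting the bound on $m$ from Step 2 yields
\[
 \abs{h^{(k)}(\tau)}\le C_\kappa\, O^{\gamma\theta_0/2}M^{1-\gamma\theta_0}\frac{k!}{(r/2)^k}.
\]
This is \eqref{eq:analytic-jet} with $\theta_\kappa=\gamma\theta_0$ and $c_\kappa=1/2$, or any smaller value. The trivial case $O=0$ is covered by the same inequalities, which then force $h\equiv0$. All constants are scale invariant: rescale $z\mapsto(z-\tau)/r$ at the outset and track only $\kappa$.
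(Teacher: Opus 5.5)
Your proposal is correct and follows essentially the same route as the paper. Both arguments use Cauchy estimates on the real interval, then Lemma~\ref{lem:analytic-real} with Cauchy--Schwarz, then a two-constants harmonic-measure step, and finally Cauchy's formula on a small circle, with $\theta_\kappa$ the product of the two exponents. The differences are cosmetic: you work on the slit disk $D(\tau,2r)\setminus[\tau-r,\tau+r]$ where the paper uses the upper and lower half-disks, and you take $m$ to be the actual supremum, so $m\le M$ holds automatically and the paper's $\min\{m,M\}$ is not needed.
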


\begin{proof}
For $t\in[\tau-r,\tau+r]$, the closed disk of radius $5r/2$ centered at $t$ is compactly contained in $D(\tau,4r)$. Cauchy's estimate gives
\[
 \abs{h^{(m)}(t)}\le M m!(5r/2)^{-m}.
\]
Apply Lemma~\ref{lem:analytic-real} on the interval of length $2r$; the analyticity parameter is a fixed positive number. Cauchy--Schwarz yields
\begin{equation}
 \sup_{\abs{t-\tau}<r}\abs{h(t)}
 \le C_\kappa
 \left(r^{-1}\int_E\abs{h(t)}^2\dd t\right)^{\theta_0/2}
 M^{1-\theta_0}=:m.
 \label{eq:analytic-bound-real}
\end{equation}
Set $m_0=\min\{m,M\}$. In the upper half-disk
\[
 D^+=D(\tau,r)\cap\{\operatorname{Im}z>0\},
\]
let $\omega(z)$ be the harmonic measure of the diameter. For a fixed $c_0\in(0,1/4)$, there exists $\theta_1>0$ such that
\[
 \omega(z)\ge\theta_1
 \quad\text{on }D(\tau,c_0r)\cap D^+.
\]
Apply the two-constants theorem to the subharmonic function
\[
 \frac12\log(\abs h^2+\varepsilon^2)
\]
and let $\varepsilon\downarrow0$. The same argument in the lower half-disk gives
\[
 \sup_{\abs{z-\tau}\le c_0r}\abs{h(z)}
 \le C_\kappa
 \left(r^{-1}\int_E\abs h^2\right)^{\theta_0\theta_1/2}
 M^{1-\theta_0\theta_1}.
\]
If the right-hand side in \eqref{eq:analytic-bound-real} is at least $M$, the same bound follows from $\abs h\le M$ after increasing the constant. Cauchy's formula on $\abs{z-\tau}=c_0r$ proves \eqref{eq:analytic-jet} with $\theta_\kappa=\theta_0\theta_1$.
\end{proof}

\begin{lemma}
\label{lem:time-window}
Let $\Gamma\Subset\partial\M$ be a fixed boundary patch, let $K\subset\Gamma\times(a,b)$ be measurable, and put $L=b-a$. Assume
\begin{equation}
 \abs K\ge\kappa S_g(\Gamma)L,
 \qquad 0<\kappa\le1.
 \label{eq:K-density}
\end{equation}
Then there are $\tau\in(a,b)$, $r\asymp_\kappa L$, a measurable set $F\subset\Gamma$, and the interior observation set
\begin{equation}
 K_f=K\cap\bigl(F\times(\tau-r,\tau+r)\bigr)
 \label{eq:Kf}
\end{equation}
such that
\begin{align}
 \tau-a&\ge8r,
 \label{eq:tau-away}\\
 S_g(F)&\ge c_\kappa S_g(\Gamma),
 \label{eq:F-measure}\\
 \abs{K_x}&\ge c_\kappa r
 \quad\text{for every }x\in F,
 \qquad
 K_x=\{t\in(\tau-r,\tau+r):(x,t)\in K\},
 \label{eq:fiber-measure}\\
 \abs{K_f}&\ge c_\kappa S_g(\Gamma)L.
 \label{eq:Kf-measure}
\end{align}
\end{lemma}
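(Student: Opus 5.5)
The plan is a pure measure-theoretic pigeonhole argument, carried out in two stages: first select a good time window, then select a good set of boundary points. Write $\mu=S_g(\Gamma)$. Fix a large integer $m=m(\kappa)$, to be chosen, and partition $(a,b)$ into $m$ consecutive intervals $I_1,\dots,I_m$ of length $L/m$.

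\textbf{Step 1: discard an initial segment.} The slab $\Gamma\times(a,a+\kappa L/2)$ has measure $\kappa\mu L/2$. Hence $K$ restricted to $\Gamma\times(a+\kappa L/2,b)$ still has measure at least $\kappa\mu L/2$.

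\textbf{Step 2: choose the time window.} Take $m$ so large that the intervals lying entirely in $(a+\kappa L/2,b)$ cover all but one interval of length $L/m\le \kappa L/4$. By pigeonhole, some such interval $I_i=(\tau-r,\tau+r)$, with $r=L/(2m)$, satisfies
\[
 \abs{K\cap(\Gamma\times I_i)}\ge \frac{\kappa\mu L}{4m}=\frac{\kappa\mu r}{2}.
\]
For \eqref{eq:tau-away} we need $\tau-a\ge 8r$. Since $\tau-r\ge a+\kappa L/2$, it suffices that $\kappa L/2\ge 7r=7L/(2m)$, i.e. $m\ge 7/\kappa$. So take $m=\lceil 8/\kappa\rceil$. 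Then $r\asymp_\kappa L$.

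\textbf{Step 3: choose the spatial set.} By Fubini, $\int_\Gamma\abs{K_x}\,dS_g(x)\ge \kappa\mu r/2$, where $\abs{K_x}\le 2r$. Define
\[
 F=\{x\in\Gamma:\abs{K_x}\ge \kappa r/4\}.
\]
This set is measurable because $x\mapsto\abs{K_x}$ is measurable by Fubini--Tonelli. Splitting the integral over $F$ and its complement gives
\[
 \frac{\kappa\mu r}{2}\le 2r\,S_g(F)+\frac{\kappa r}{4}\mu,
\]
hence $S_g(F)\ge \kappa\mu/8$. This gives \eqref{eq:F-measure} and \eqref{eq:fiber-measure} with $c_\kappa=\kappa/8$ (possibly reduced).

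\textbf{Step 4: the measure of $K_f$.} By Fubini,
\[
 \abs{K_f}=\int_F\abs{K_x}\,dS_g\ge \frac{\kappa r}{4}\cdot\frac{\kappa\mu}{8},
\]
and since $r=L/(2m)\asymp\kappa L$, this is $\ge c_\kappa\mu L$, which is \eqref{eq:Kf-measure}.

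The only delicate point is the bookkeeping in Step 2: $\tau$ must sit at least $8r$ from $a$ while the window keeps a proportional share of $K$. Removing an initial slab of relative width $\kappa/2$ before applying the pigeonhole handles this. The rest is Fubini together with measurability of the fibre-length function.
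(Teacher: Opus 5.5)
Your proof is correct and follows essentially the same route as the paper. Both arguments discard an initial time slab of width proportional to $\kappa L$, then use pigeonhole over equal-length windows to find one carrying a proportional share of $K$, and finally pass through Fubini and a Chebyshev-type superlevel set to obtain $F$, with $\abs{K_f}$ bounded by integrating the fibre bound. The differences are only bookkeeping: you remove width $\kappa L/2$ and partition all of $(a,b)$, accounting for the straddling interval, whereas the paper removes $\kappa L/4$ and partitions the remaining interval exactly.
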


\begin{proof}
Remove the initial layer $\Gamma\times(a,a+\kappa L/4)$. Its measure is at most $\kappa S_g(\Gamma)L/4$, so the remaining part of $K$ has measure at least $3\kappa S_g(\Gamma)L/4$. Choose an integer $N=N(\kappa)$ satisfying
\[
 \frac{64}{\kappa}\le N<\frac{64}{\kappa}+1
\]
and divide $(a+\kappa L/4,b)$ into $N$ adjacent intervals of equal length $2r$. Then
\[
 2r=\frac{(1-\kappa/4)L}{N}\le\frac{\kappa L}{64},
 \qquad
 r\le\frac{\kappa L}{128},
\]
and also $r\asymp_\kappa L$. By the pigeonhole principle, one interval
\[
 I_0=(\tau-r,\tau+r)
\]
satisfies
\[
 \abs{K\cap(\Gamma\times I_0)}\ge c_\kappa S_g(\Gamma)r.
\]
Since $I_0$ lies after the removed initial layer,
\[
 \tau-a\ge\frac{\kappa L}{4}+r\ge33r>8r,
\]
which proves \eqref{eq:tau-away}.

The map $x\mapsto\abs{K_x}$ is measurable by Fubini. Choose a small fixed $\beta_\kappa>0$ and let
\[
 F=\{x\in\Gamma:\abs{K_x}\ge\beta_\kappa r\}.
\]
If $S_g(F)<c_\kappa S_g(\Gamma)$ with $c_\kappa$ sufficiently small, then
\[
 \abs{K\cap(\Gamma\times I_0)}
 \le2rS_g(F)+\beta_\kappa rS_g(\Gamma\setminus F),
\]
contradicting the preceding lower bound. Thus, after renaming constants, \eqref{eq:F-measure} and \eqref{eq:fiber-measure} hold. Integrating the fiber lower bound over $F$ gives \eqref{eq:Kf-measure}.
\end{proof}

Fix $a<b$, put $f=u(a)$ and $L=b-a$, and define
\begin{equation}
 g_\Lambda(x,t)
 =\partial_{\nu_g}P_\Lambda u(x,t)
 =\partial_{\nu_g}e^{-(t-a)A}P_\Lambda f.
 \label{eq:g-Lambda}
\end{equation}

\begin{lemma}
\label{lem:normal-trace}
There exists $C_\M>0$ such that, for every $v\in D(A)$,
\begin{equation}
 \norm{\partial_{\nu_g}v}_{L^2(\partial\M)}
 \le C_\M\norm{Av}_{L^2(\M)}.
 \label{eq:normal-trace}
\end{equation}
\end{lemma}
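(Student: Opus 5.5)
The plan is to combine $H^2$ elliptic regularity for the Dirichlet Laplacian on the $W^{2,\infty}$ manifold with the standard trace theorem. For $v\in D(A)$ I will show that $v\in H^2(\M)$ with
\[
 \norm v_{H^2(\M)}\le C\bigl(\norm{Av}_{L^2(\M)}+\norm v_{L^2(\M)}\bigr).
\]
Since $A\ge\lambda_1>0$, the spectral theorem gives $\norm v_{L^2}\le\lambda_1^{-1}\norm{Av}_{L^2}$. The lower-order term is therefore absorbed, and $\norm v_{H^2(\M)}\le C\norm{Av}_{L^2(\M)}$.

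Given the $H^2$ bound, the conormal derivative is the trace of $\ip{\nabla_gv}{\nu_g}_g$. In a $W^{2,\infty}$ chart, the components $g^{jk}$ and the normal field are Lipschitz, so multiplication by them maps $H^1$ to $H^1$. The trace theorem $H^1(\M)\to L^2(\partial\M)$, which is valid on Lipschitz (a fortiori $W^{2,\infty}$) domains, then gives
\[
 \norm{\partial_{\nu_g}v}_{L^2(\partial\M)}\le C\norm{\nabla_g v}_{H^1(\M)}\le C\norm v_{H^2(\M)}.
\]
This proves \eqref{eq:normal-trace}.

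The main step is the $H^2$ regularity, and I would establish it by the same flattening and reflection used in Lemma~\ref{lem:regularity}, now without the auxiliary variable. Cover $\overline\M$ by finitely many interior charts and boundary charts. In each boundary chart, straighten the boundary with the metric-normal map $\Phi$ of Lemma~\ref{lem:cap}, restricted to the spatial variables. Because $\Phi,\Phi^{-1}\in W^{2,\infty}$, composition preserves $H^2$. The transformed equation is $\partial_i(A^{ij}\partial_jw)=\kappa F$ with $F=-Av\circ\Phi\in L^2$ and Lipschitz, uniformly elliptic $A^{ij}$ whose mixed blocks vanish on $\{y_n=0\}$.

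Next, reflect oddly as in Lemma~\ref{lem:odd-reflection}: the function $w$, the right-hand side $F$, and the coefficients via \eqref{eq:reflection-matrix}. The reflected $w^e$ is then an $H^1$ weak solution of a divergence-form equation with Lipschitz coefficients and $L^2$ right-hand side in a full ball. The computation is identical to that lemma, with the extra term $\int\kappa F\zeta$ splitting the same way into the odd test function. Interior $H^2$ estimates for Lipschitz coefficients, via difference quotients as in \cite[Theorem~8.8]{GilbargTrudinger}, give
\[
 \norm{w^e}_{H^2(B_{r/2})}\le C\bigl(\norm F_{L^2(B_r)}+\norm{w^e}_{H^1(B_r)}\bigr).
\]
Summing over the cover and using $\norm v_{H^1}^2\lesssim\ip{Av}{v}\le\lambda_1^{-1}\norm{Av}^2$ closes the estimate.

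The only delicate point is confirming that the reflected coefficient matrix is Lipschitz across the hyperplane. This holds because the tangential–normal blocks vanish there, exactly as already verified in the construction preceding Lemma~\ref{lem:odd-reflection}. Once that is in place, the argument is routine, and all constants depend only on the fixed atlas, the ellipticity, and the Lipschitz bounds of $g$.
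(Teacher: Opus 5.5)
Your proposal is correct and follows the paper's proof essentially step by step. Both arguments establish global $H^2$ regularity via the metric-normal flattening of Lemma~\ref{lem:cap}, using odd reflection of the solution, the $L^2$ forcing, and the coefficients, followed by the interior $H^2$ estimate for Lipschitz divergence-form equations; they then apply the trace theorem and remove the lower-order term using $\lambda_1>0$. Your use of the form identity to control $\norm{v}_{H^1}$ is exactly how the paper treats that term in its localized version, Lemma~\ref{lem:local-normal-trace}.
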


\begin{proof}
Let $F=Av\in L^2(\M)$. In each coordinate chart, $v$ satisfies a uniformly elliptic divergence-form equation with $W^{1,\infty}$ coefficients and an $L^2$ right-hand side. Interior difference quotients give $H^2$ regularity on smaller interior charts. On a boundary chart, use the $W^{2,\infty}$ flattening of Lemma~\ref{lem:cap}; the zero Dirichlet trace permits odd reflection of the solution and of the $L^2$ right-hand side, while the coefficient matrix is reflected as in \eqref{eq:reflection-matrix}. The reflected equation has $W^{1,\infty}$ coefficients and $L^2$ forcing, so the standard interior $H^2$ estimate applies. A finite partition of unity gives
\begin{equation}
 D(A)=H^2(\M)\cap H_0^1(\M),
 \qquad
 \norm v_{H^2(\M)}
 \le C\bigl(\norm{Av}_{L^2(\M)}+\norm v_{L^2(\M)}\bigr).
 \label{eq:global-H2}
\end{equation}
This is the global Dirichlet regularity theorem for a $W^{2,\infty}$ boundary and Lipschitz coefficients; see \cite[Chapters~8--9]{GilbargTrudinger} and \cite{LadyzhenskayaUraltseva}. The trace of $\nabla v\in H^1(\M)$ belongs to $H^{1/2}(\partial\M)$, and the metric normal is Lipschitz. Hence
\[
 \norm{\partial_{\nu_g}v}_{L^2(\partial\M)}
 \le C\norm v_{H^2(\M)}.
\]
The first Dirichlet eigenvalue is positive, so
\[
 \norm v_{L^2(\M)}\le\lambda_1^{-1}\norm{Av}_{L^2(\M)}.
\]
Combining these estimates proves \eqref{eq:normal-trace}.
\end{proof}

\begin{lemma}
\label{lem:majorant}
Assume $\tau-a\ge8r$ and define
\begin{equation}
 M_\Lambda(x)
 =\sum_{k=0}^\infty\frac{(4r)^k}{k!}
 \abs{\partial_t^kg_\Lambda(x,\tau)}.
 \label{eq:M-Lambda}
\end{equation}
Then
\begin{equation}
 \norm{M_\Lambda}_{L^2(\partial\M)}
 \le\frac{C}{\tau-a}\norm f_{L^2(\M)}
 \le\frac{C_\kappa}{L}\norm f_{L^2(\M)}.
 \label{eq:M-Lambda-bound}
\end{equation}
For almost every $x\in\partial\M$, the function $z\mapsto g_\Lambda(x,z)$ is entire and
\begin{equation}
 \abs{g_\Lambda(x,z)}\le M_\Lambda(x),
 \qquad \abs{z-\tau}<4r.
 \label{eq:M-majorizes}
\end{equation}
\end{lemma}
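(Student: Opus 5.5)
The plan is to expand $g_\Lambda$ modally and estimate each time derivative by Lemma~\ref{lem:normal-trace}. Write $P_\Lambda f=\sum_{\lambda_j\le\Lambda}c_j\phi_j$ with $c_j=(f,\phi_j)$. Then
\[
 g_\Lambda(x,z)=\sum_{\lambda_j\le\Lambda}c_je^{-(z-a)\lambda_j}\partial_{\nu_g}\phi_j(x),
\]
a finite sum of exponentials in $z$. Since each $\partial_{\nu_g}\phi_j\in L^2(\partial\M)$, this is, for almost every $x$, an entire function of $z$, which gives the first assertion of \eqref{eq:M-majorizes}. The derivatives at $\tau$ are
\[
 \partial_t^kg_\Lambda(\cdot,\tau)=(-1)^k\partial_{\nu_g}\bigl(A^ke^{-(\tau-a)A}P_\Lambda f\bigr).
\]

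For the $L^2$ bound \eqref{eq:M-Lambda-bound}, apply Minkowski's inequality to the series and then Lemma~\ref{lem:normal-trace} with $v=A^ke^{-(\tau-a)A}P_\Lambda f\in D(A)$:
\[
 \norm{M_\Lambda}_{L^2(\partial\M)}\le\sum_k\frac{(4r)^k}{k!}C_\M\norm{A^{k+1}e^{-(\tau-a)A}P_\Lambda f}_{L^2(\M)}.
\]
Spectral calculus gives $\norm{A^{m}e^{-sA}}\le\sup_{\lambda>0}\lambda^me^{-s\lambda}=(m/(es))^m$, which is at most $m!\,s^{-m}$ by Stirling (for $m\ge1$). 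With $s=\tau-a$ and $m=k+1$, the $k$-th term is bounded by
\[
 C\frac{(4r)^k(k+1)!}{k!\,(\tau-a)^{k+1}}\norm f=\frac{C(k+1)}{\tau-a}\Bigl(\frac{4r}{\tau-a}\Bigr)^k\norm f.
\]
Since $4r/(\tau-a)\le1/2$, the series $\sum(k+1)2^{-k}$ converges, giving $C(\tau-a)^{-1}\norm f$. The second inequality in \eqref{eq:M-Lambda-bound} follows because $\tau-a\gtrsim_\kappa L$ in the setting of Lemma~\ref{lem:time-window} ($\tau-a\ge\kappa L/4$). To be safe, one can instead split $e^{-(\tau-a)A}$ as $e^{-(\tau-a)A/2}e^{-(\tau-a)A/2}$ and use the $\lambda_1$ bound for extra room; that is harmless.

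For the pointwise majorization: for fixed $x$ outside a null set, the Taylor series of the entire function $g_\Lambda(x,\cdot)$ at $\tau$ converges everywhere. For $\abs{z-\tau}<4r$,
\[
 \abs{g_\Lambda(x,z)}\le\sum_k\frac{\abs{z-\tau}^k}{k!}\abs{\partial_t^kg_\Lambda(x,\tau)}\le M_\Lambda(x).
\]
A subtlety is that $M_\Lambda(x)$ is finite almost everywhere; this follows from the $L^2$ bound. Another is that the Taylor coefficients are the pointwise values $\sum_jc_j(-\lambda_j)^ke^{-(\tau-a)\lambda_j}\partial_{\nu_g}\phi_j(x)$. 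Since the sum is finite, one fixed representative of each $\partial_{\nu_g}\phi_j$ works for all $k$ simultaneously, so no null-set issues arise.

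The main obstacle is the summation of the derivative bounds: the constant must be uniform in $\Lambda$. This is why the norm must be controlled through $\norm{A^me^{-sA}}$ rather than through $\Lambda^k$, and why the geometric ratio $4r/(\tau-a)\le1/2$, guaranteed by $\tau-a\ge8r$, is essential.
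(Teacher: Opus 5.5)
Your proposal is correct and follows the paper's proof essentially step for step. It applies Lemma~\ref{lem:normal-trace} to $A^k e^{-(\tau-a)A}P_\Lambda f$, uses the spectral bound $\sup_{\lambda}\lambda^{k+1}e^{-(\tau-a)\lambda}\le (k+1)!/(\tau-a)^{k+1}$, sums by Minkowski with ratio $4r/(\tau-a)\le 1/2$, and expands the finite exponential sum in Taylor series. Fixing one representative of each $\partial_{\nu_g}\phi_j$ is a slightly cleaner way to handle the null sets than the paper's common representatives for all derivatives, and the aside about splitting the semigroup is unnecessary.
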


\begin{proof}
By Lemma~\ref{lem:normal-trace},
\begin{align*}
 \norm{\partial_t^kg_\Lambda(\cdot,\tau)}_{L^2(\partial\M)}
 &\le C\norm{A^{k+1}e^{-(\tau-a)A}P_\Lambda f}_{L^2(\M)}\\
 &\le C\sup_{\lambda\ge0}\lambda^{k+1}e^{-(\tau-a)\lambda}
 \norm f_{L^2(\M)}\\
 &\le C\frac{(k+1)!}{(\tau-a)^{k+1}}\norm f_{L^2(\M)}.
\end{align*}
Minkowski's inequality and $4r/(\tau-a)\le1/2$ give
\begin{align*}
 \norm{M_\Lambda}_{L^2(\partial\M)}
 &\le\frac{C}{\tau-a}
 \sum_{k=0}^\infty(k+1)
 \left(\frac{4r}{\tau-a}\right)^k
 \norm f_{L^2(\M)}\\
 &\le\frac{C}{\tau-a}\norm f_{L^2(\M)},
\end{align*}
which proves \eqref{eq:M-Lambda-bound}. The finite rank of $P_\Lambda$ makes $g_\Lambda$ an entire finite sum of exponentials. Choose common measurable representatives of the countably many derivatives in \eqref{eq:M-Lambda}; for almost every $x$, Taylor's formula gives \eqref{eq:M-majorizes}. If $M_\Lambda(x)=0$, the entire function is identically zero.
\end{proof}

Expand
\[
 f=u(a)=\sum_{j\ge1}f_j\phi_j.
\]
At the time $\tau$ supplied by Lemma~\ref{lem:time-window}, define
\begin{equation}
 V_{\Lambda,\tau}(x,s)
 =\sum_{\lambda_j\le\Lambda}
 f_je^{-(\tau-a)\lambda_j}\phi_j(x)
 \cosh(\sqrt{\lambda_j}s).
 \label{eq:V-Lambda-tau}
\end{equation}
Then
\begin{equation}
 \sum_{\lambda_j\le\Lambda}
 \abs{f_j}^2e^{-2(\tau-a)\lambda_j}
 =\norm{P_\Lambda u(\tau)}_{L^2(\M)}^2.
 \label{eq:V-coeff-mass}
\end{equation}

\begin{lemma}
\label{lem:time-jet-identity}
For every $s\in\R$, the identity
\begin{equation}
 \partial_{\nu_g}V_{\Lambda,\tau}(\cdot,s)
 =\sum_{k=0}^\infty
 \frac{(-1)^ks^{2k}}{(2k)!}
 \partial_t^kg_\Lambda(\cdot,\tau)
 \label{eq:time-jet-identity}
\end{equation}
holds in $L^2(\partial\M)$. After choosing common representatives, it holds for almost every boundary point for all $s$, and the series is absolutely convergent.
\end{lemma}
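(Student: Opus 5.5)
The identity is a finite-rank computation once both sides are written in modal form. The plan is to expand each side in the eigenfunctions $\phi_j$ with $\lambda_j\le\Lambda$, compare the resulting power series in $s$ coefficient by coefficient, and then justify the passage to pointwise, absolutely convergent statements.

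First compute the modal form of both sides. Since $P_\Lambda$ has finite rank,
\[
 g_\Lambda(\cdot,t)=\sum_{\lambda_j\le\Lambda}f_je^{-(t-a)\lambda_j}\partial_{\nu_g}\phi_j .
\]
Each $\partial_{\nu_g}\phi_j\in L^2(\partial\M)$ by Lemma~\ref{lem:normal-trace}, because $\phi_j\in D(A)$. Differentiating $k$ times in $t$ at $t=\tau$ gives
\[
 \partial_t^kg_\Lambda(\cdot,\tau)=\sum_{\lambda_j\le\Lambda}(-\lambda_j)^kf_je^{-(\tau-a)\lambda_j}\partial_{\nu_g}\phi_j ,
\]
a finite sum, so no interchange issue arises. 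On the other side, $\partial_{\nu_g}$ acts only in $x$, and $V_{\Lambda,\tau}$ is a finite sum. Hence
\[
 \partial_{\nu_g}V_{\Lambda,\tau}(\cdot,s)=\sum_{\lambda_j\le\Lambda}f_je^{-(\tau-a)\lambda_j}\cosh(\sqrt{\lambda_j}s)\,\partial_{\nu_g}\phi_j .
\]

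Next match the series. Insert $\cosh(\sqrt{\lambda}s)=\sum_{k\ge0}\lambda^ks^{2k}/(2k)!$. Then
\[
 \frac{(-1)^ks^{2k}}{(2k)!}\,(-\lambda_j)^k=\frac{\lambda_j^ks^{2k}}{(2k)!} ,
\]
so the $k$-th term of the right side of \eqref{eq:time-jet-identity} equals the sum over $j$ of the $k$-th Taylor term of $\cosh$. Interchanging the finite $j$-sum with the $k$-series is legitimate in $L^2(\partial\M)$ because of the bound
\[
 \sum_k\frac{s^{2k}}{(2k)!}\Bigl\|\partial_t^kg_\Lambda(\cdot,\tau)\Bigr\|_{L^2(\partial\M)}\le \sum_{\lambda_j\le\Lambda}|f_j|\,\|\partial_{\nu_g}\phi_j\|_{L^2(\partial\M)}\cosh(\sqrt{\lambda_j}s)<\infty .
\]
This gives \eqref{eq:time-jet-identity} in $L^2(\partial\M)$ for every $s$.

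For the pointwise statement, fix representatives of the finitely many functions $\partial_{\nu_g}\phi_j$. These determine common representatives of every $\partial_t^kg_\Lambda(\cdot,\tau)$, each being a fixed finite linear combination. At any point $x$ where all of them are finite, which is almost every $x$, absolute convergence follows from
\[
 \sum_k\frac{s^{2k}}{(2k)!}\bigl|\partial_t^kg_\Lambda(x,\tau)\bigr|\le\sum_j|f_j|\,|\partial_{\nu_g}\phi_j(x)|\cosh(\sqrt{\lambda_j}s) .
\]
The same rearrangement then gives the identity at $x$ for all $s$ simultaneously, since the exceptional null set does not depend on $s$.

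There is no real obstacle here. The only point needing care is the uniformity of the exceptional set in $s$, and it is resolved by choosing representatives of the finitely many traces $\partial_{\nu_g}\phi_j$ rather than of the countably many jets separately.
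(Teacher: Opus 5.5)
Your proposal is correct and follows essentially the same route as the paper: write $\partial_t^kg_\Lambda(\cdot,\tau)$ and $\partial_{\nu_g}V_{\Lambda,\tau}(\cdot,s)$ in modal form over the finitely many $\lambda_j\le\Lambda$, insert the Taylor series of $\cosh(\sqrt{\lambda_j}s)$, and interchange the finite spectral sum with the $k$-series, so that the two factors $(-1)^k$ cancel. Your explicit majorant and your choice of representatives for the finitely many traces $\partial_{\nu_g}\phi_j$ make precise the absolute-convergence and almost-everywhere clauses, which the paper's short proof leaves implicit.
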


\begin{proof}
For each $k$,
\[
 \partial_t^kg_\Lambda(\cdot,\tau)
 =(-1)^k\sum_{\lambda_j\le\Lambda}
 \lambda_j^kf_je^{-(\tau-a)\lambda_j}
 \partial_{\nu_g}\phi_j,
\]
whereas
\[
 \cosh(\sqrt{\lambda_j}s)
 =\sum_{k\ge0}\frac{\lambda_j^ks^{2k}}{(2k)!}.
\]
The spectral sum is finite, so the two sums may be interchanged in $L^2(\partial\M)$; the two factors $(-1)^k$ cancel.
\end{proof}

\begin{proposition}
\label{prop:transfer}
Let $K\subset\Gamma\times(a,b)$ satisfy \eqref{eq:K-density}, and let $\tau,r,F,K_f$ be supplied by Lemma~\ref{lem:time-window}. Fix the auxiliary interval $I_*$ from Corollary~\ref{cor:uniform-product}. Then there are $C_\kappa>0$ and $\theta_\kappa\in(0,1)$ such that
\begin{equation}
 \norm{\partial_{\nu_g}V_{\Lambda,\tau}}_{L^2(F\times I_*)}
 \le C_\kappa e^{C_\kappa/L}
 \norm{g_\Lambda}_{L^2(K_f)}^{\theta_\kappa}
 \norm{u(a)}_{L^2(\M)}^{1-\theta_\kappa}.
 \label{eq:transfer}
\end{equation}
\end{proposition}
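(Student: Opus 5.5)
The plan is to combine the time-jet identity of Lemma~\ref{lem:time-jet-identity} with the pointwise analytic propagation of Lemma~\ref{lem:analytic-jet}, applied fiberwise in $x\in F$, and then integrate over $F$. Fix $x\in F$ outside the null set of Lemma~\ref{lem:majorant}. The function $h_x(z)=g_\Lambda(x,z)$ is entire and, by \eqref{eq:M-majorizes}, satisfies $\abs{h_x}\le M_\Lambda(x)$ on $D(\tau,4r)$. The fiber $K_x\subset(\tau-r,\tau+r)$ has $\abs{K_x}\ge c_\kappa r$ by \eqref{eq:fiber-measure}. Lemma~\ref{lem:analytic-jet} with $E=K_x$ and $M=M_\Lambda(x)$ therefore gives, for every $k\ge0$,
\[
 \abs{\partial_t^kg_\Lambda(x,\tau)}
 \le C_\kappa\,\mathcal O(x)^{\theta_\kappa/2}M_\Lambda(x)^{1-\theta_\kappa}\frac{k!}{(c_\kappa r)^k},
 \qquad
 \mathcal O(x)=r^{-1}\int_{K_x}\abs{g_\Lambda(x,t)}^2\dd t .
\]
If $M_\Lambda(x)=0$, then $h_x\equiv0$ and there is nothing to prove at that point.

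Next, insert this bound into the series \eqref{eq:time-jet-identity} for $s\in I_*$, where $\abs s\le s_0$. This gives
\[
 \abs{\partial_{\nu_g}V_{\Lambda,\tau}(x,s)}
 \le C_\kappa\,\mathcal O(x)^{\theta_\kappa/2}M_\Lambda(x)^{1-\theta_\kappa}
 \sum_{k\ge0}\frac{k!}{(2k)!}\Bigl(\frac{s_0^2}{c_\kappa r}\Bigr)^k .
\]
Since $k!/(2k)!\le 1/k!$, the sum is at most $\exp(s_0^2/(c_\kappa r))$. Because $r\asymp_\kappa L$, this factor is $e^{C_\kappa/L}$, which is exactly the loss recorded in \eqref{eq:transfer}. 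Integrating over $s\in I_*$ only contributes a fixed factor $\abs{I_*}^{1/2}$.

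It remains to integrate over $x\in F$. Squaring and integrating gives
\[
 \int_F\mathcal O^{\theta_\kappa}M_\Lambda^{2(1-\theta_\kappa)}\dd S_g .
\]
Apply H\"older's inequality with exponents $1/\theta_\kappa$ and $1/(1-\theta_\kappa)$, which yields
\[
 \Bigl(\int_F\mathcal O\dd S_g\Bigr)^{\theta_\kappa}\norm{M_\Lambda}_{L^2(\partial\M)}^{2(1-\theta_\kappa)} .
\]
By Fubini, $\int_F\mathcal O\dd S_g=r^{-1}\norm{g_\Lambda}_{L^2(K_f)}^2$, using the definition \eqref{eq:Kf} of $K_f$. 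By \eqref{eq:M-Lambda-bound}, $\norm{M_\Lambda}_{L^2}\le C_\kappa L^{-1}\norm{u(a)}_{L^2(\M)}$. The leftover powers of $r^{-1}\asymp L^{-1}$ are absorbed into $e^{C_\kappa/L}$, since $L^{-p}\le C_pe^{1/L}$. Taking square roots yields \eqref{eq:transfer}.

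The main obstacle I expect is measurability and the choice of representatives in the fiberwise argument. One must check that $\mathcal O$ and $M_\Lambda$ are measurable, and that the exceptional null sets of Lemmas~\ref{lem:majorant} and \ref{lem:time-jet-identity} can be chosen jointly. Since $g_\Lambda$ is a finite exponential sum with $L^2(\partial\M)$ coefficients, I would fix representatives of the finitely many traces $\partial_{\nu_g}\phi_j$. With that choice everything is jointly measurable, and the fiberwise estimate holds for almost every $x\in F$. A secondary point is to keep $\theta_\kappa$ and $c_\kappa$ independent of $x$ and $\Lambda$. This holds because Lemma~\ref{lem:analytic-jet} depends only on the density ratio $\kappa$, and that ratio is uniform over $F$ by \eqref{eq:fiber-measure}.
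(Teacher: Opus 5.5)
Your proposal is correct and follows essentially the same route as the paper. The paper likewise applies Lemma~\ref{lem:analytic-jet} fiberwise with the majorant $M_\Lambda(x)$, inserts the jet bounds into the series of Lemma~\ref{lem:time-jet-identity} using $(2k)!\ge(k!)^2$, applies H\"older in $x$ with exponents $1/\theta_\kappa$ and $1/(1-\theta_\kappa)$, uses Fubini to identify the $L^2(F)$ norm of the fiber observation with $r^{-1}\norm{g_\Lambda}_{L^2(K_f)}^2$, and absorbs the powers of $L^{-1}$ from Lemma~\ref{lem:majorant} into $e^{C_\kappa/L}$. Your choice of fixed representatives for the finitely many traces $\partial_{\nu_g}\phi_j$ is a slightly cleaner way to settle the measurability issue than the paper's selection of common representatives for countably many derivatives.
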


\begin{proof}
For almost every $x\in F$, the time fiber $K_x$ satisfies $\abs{K_x}\ge c_\kappa r$. Apply Lemma~\ref{lem:analytic-jet} to
\[
 h_x(z)=g_\Lambda(x,z)
\]
with majorant $M_\Lambda(x)$. For every $k\ge0$,
\begin{equation}
 \abs{\partial_t^kg_\Lambda(x,\tau)}
 \le C_\kappa A_\Lambda(x)^{\theta_\kappa}
 M_\Lambda(x)^{1-\theta_\kappa}
 \frac{k!}{(c_\kappa r)^k},
 \label{eq:pointwise-jet}
\end{equation}
where
\begin{equation}
 A_\Lambda(x)
 =\left(r^{-1}\int_{K_x}\abs{g_\Lambda(x,t)}^2\dd t\right)^{1/2}.
 \label{eq:A-Lambda-x}
\end{equation}
Substitution into \eqref{eq:time-jet-identity} gives
\begin{align*}
 \abs{\partial_{\nu_g}V_{\Lambda,\tau}(x,s)}
 &\le C_\kappa A_\Lambda(x)^{\theta_\kappa}
 M_\Lambda(x)^{1-\theta_\kappa}
 \sum_{k=0}^\infty\frac{k!}{(2k)!}
 \left(\frac{s^2}{c_\kappa r}\right)^k.
\end{align*}
Since $(2k)!\ge(k!)^2$, the series is bounded by
\[
 \exp\left(\frac{s^2}{c_\kappa r}\right)
 \le e^{C_\kappa/L}
\]
on the fixed interval $I_*$. H\"older's inequality in $x$ yields
\begin{equation}
 \norm{\partial_{\nu_g}V_{\Lambda,\tau}}_{L^2(F\times I_*)}
 \le C_\kappa e^{C_\kappa/L}
 \norm{A_\Lambda}_{L^2(F)}^{\theta_\kappa}
 \norm{M_\Lambda}_{L^2(F)}^{1-\theta_\kappa}.
 \label{eq:transfer-holder}
\end{equation}
By the definition of $K_f$ and Fubini,
\begin{equation}
 \norm{A_\Lambda}_{L^2(F)}^2
 =r^{-1}\int_{K_f}\abs{g_\Lambda}^2\dd S_g\dd t.
 \label{eq:A-Lambda-L2}
\end{equation}
Use Lemma~\ref{lem:majorant}, $r\asymp_\kappa L$, and absorb the resulting fixed powers of $L^{-1}$ into $e^{C_\kappa/L}$. This gives \eqref{eq:transfer}.
\end{proof}

Apply Corollary~\ref{cor:uniform-product} to $V_{\Lambda,\tau}$. By \eqref{eq:F-measure}, the spectral constant depends only on $\kappa$ and the fixed geometry. Combining \eqref{eq:V-coeff-mass} and Proposition~\ref{prop:transfer},
\begin{align}
 \norm{P_\Lambda u(\tau)}_{L^2(\M)}
 &\le C_\kappa e^{C_\kappa\sqrt\Lambda}
 \norm{\partial_{\nu_g}V_{\Lambda,\tau}}_{L^2(F\times I_*)}
 \notag\\
 &\le C_\kappa e^{C_\kappa/L+C_\kappa\sqrt\Lambda}
 \norm{g_\Lambda}_{L^2(K_f)}^\theta
 \norm{u(a)}_{L^2(\M)}^{1-\theta},
 \label{eq:low-frequency-at-tau}
\end{align}
where $\theta=\theta_\kappa$. Since $P_\Lambda$ commutes with the semigroup,
\begin{equation}
 \norm{P_\Lambda u(b)}_{L^2(\M)}
 \le\norm{P_\Lambda u(\tau)}_{L^2(\M)}.
 \label{eq:low-frequency-contract}
\end{equation}

Let $g=\partial_{\nu_g}u$. Then
\[
 g_\Lambda=g-\partial_{\nu_g}Q_\Lambda u.
\]
Every $(x,t)\in K_f$ has $t\in(\tau-r,\tau+r)$, and \eqref{eq:tau-away} implies $t-a\ge c_\kappa L$. By Lemma~\ref{lem:normal-trace},
\begin{align}
 \norm{\partial_{\nu_g}Q_\Lambda u(t)}_{L^2(\partial\M)}
 &\le C\norm{Ae^{-(t-a)A}Q_\Lambda u(a)}_{L^2(\M)}\notag\\
 &\le C\sup_{\lambda>\Lambda}\lambda e^{-(t-a)\lambda}
 \norm{u(a)}_{L^2(\M)}.
 \label{eq:high-trace-pre}
\end{align}
Writing $d=t-a$,
\[
 \lambda e^{-d\lambda}
 =e^{-d\lambda/2}\lambda e^{-d\lambda/2}
 \le\frac{C}{d}e^{-d\Lambda/2}.
\]
Thus on the interior time window,
\begin{equation}
 \norm{\partial_{\nu_g}Q_\Lambda u(t)}_{L^2(\partial\M)}
 \le C_\kappa L^{-1}e^{-c_\kappa L\Lambda}
 \norm{u(a)}_{L^2(\M)}.
 \label{eq:high-trace}
\end{equation}
Integrating only over $K_f\subset F\times(\tau-r,\tau+r)$ gives
\begin{equation}
 \norm{g_\Lambda}_{L^2(K_f)}
 \le\norm g_{L^2(K_f)}
 +C_\kappa L^{-1/2}e^{-c_\kappa L\Lambda}
 \norm{u(a)}_{L^2(\M)}.
 \label{eq:g-Lambda-high}
\end{equation}
The high-frequency component of the final state satisfies
\begin{equation}
 \norm{Q_\Lambda u(b)}_{L^2(\M)}
 \le e^{-L\Lambda}\norm{u(a)}_{L^2(\M)}.
 \label{eq:high-final}
\end{equation}
Substitute \eqref{eq:g-Lambda-high} into \eqref{eq:low-frequency-at-tau}, use $(X+Y)^\theta\le X^\theta+Y^\theta$, and combine with \eqref{eq:low-frequency-contract}--\eqref{eq:high-final}. Fixed powers of $L^{-1}$ are absorbed into $e^{C_\kappa/L}$. Young's inequality
\[
 C_\kappa\sqrt\Lambda
 \le\frac{c_\kappa\theta}{2}L\Lambda+\frac{C_\kappa}{L}
\]
yields the following estimate.

\begin{proposition}
\label{prop:bridge}
Under the assumptions of Lemma~\ref{lem:time-window}, for every $\Lambda\ge1$,
\begin{align}
 \norm{u(b)}_{L^2(\M)}
 &\le C_\kappa e^{C_\kappa/L+C_\kappa\sqrt\Lambda}
 \norm{\partial_{\nu_g}u}_{L^2(K_f)}^\theta
 \norm{u(a)}_{L^2(\M)}^{1-\theta}
 \notag\\
 &\quad+C_\kappa e^{C_\kappa/L-c_\kappa L\Lambda}
 \norm{u(a)}_{L^2(\M)}.
 \label{eq:bridge}
\end{align}
\end{proposition}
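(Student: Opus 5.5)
The plan is to split $u(b)=P_\Lambda u(b)+Q_\Lambda u(b)$ and estimate each piece by the ingredients assembled just above. Write $\theta=\theta_\kappa$.

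\emph{Low frequencies.} By \eqref{eq:low-frequency-contract} it suffices to bound $\norm{P_\Lambda u(\tau)}_{L^2(\M)}$. Here $\tau,r,F,K_f$ come from Lemma~\ref{lem:time-window}, and \eqref{eq:F-measure} gives $S_g(F)\ge c_\kappa S_g(\Gamma)$. Corollary~\ref{cor:uniform-product} then applies to $V_{\Lambda,\tau}$ with $\eta=c_\kappa S_g(\Gamma)$, so its constant depends only on $\kappa$. Together with \eqref{eq:V-coeff-mass} and Proposition~\ref{prop:transfer}, this yields \eqref{eq:low-frequency-at-tau}. It remains to replace $g_\Lambda$ by the true conormal trace $g=\partial_{\nu_g}u$ on $K_f$. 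I will use \eqref{eq:g-Lambda-high}, which rests on Lemma~\ref{lem:normal-trace} and on the separation $t-a\ge c_\kappa L$ on $K_f$ coming from \eqref{eq:tau-away}. By subadditivity of $X\mapsto X^\theta$,
\[
 \norm{g_\Lambda}_{L^2(K_f)}^\theta\norm{u(a)}^{1-\theta}
 \le\norm{g}_{L^2(K_f)}^\theta\norm{u(a)}^{1-\theta}
 +C_\kappa L^{-\theta/2}e^{-c_\kappa\theta L\Lambda}\norm{u(a)}.
\]

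\emph{Absorbing the cross term.} The second term carries the prefactor $e^{C_\kappa/L+C_\kappa\sqrt\Lambda}$ inherited from \eqref{eq:low-frequency-at-tau}. By Young's inequality, $C_\kappa\sqrt\Lambda\le\tfrac{c_\kappa\theta}{2}L\Lambda+C'_\kappa/L$. The net exponent is then $C_\kappa/L-\tfrac{c_\kappa\theta}{2}L\Lambda$, which has the required form after renaming $c_\kappa$. The fixed powers $L^{-1/2}$ and $L^{-\theta/2}$ are bounded by $C e^{C/L}$ and absorbed into the exponential.

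\emph{High frequencies.} By \eqref{eq:high-final}, $\norm{Q_\Lambda u(b)}\le e^{-L\Lambda}\norm{u(a)}$, which is also dominated by the second term of \eqref{eq:bridge}, provided $c_\kappa\le1$.

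Adding the low- and high-frequency bounds gives \eqref{eq:bridge}. The only delicate point is the bookkeeping in the cross term. The Young split must consume only half of the decay rate $c_\kappa\theta L\Lambda$, so that decay survives; this is why the final constant $c_\kappa$ must be allowed to depend on $\theta$. Otherwise the argument is a direct concatenation of the estimates already established.
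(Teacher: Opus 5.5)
Your proposal is correct and follows the paper's own argument. You split $u(b)$ into $P_\Lambda u(b)$ and $Q_\Lambda u(b)$, control the low-frequency part through \eqref{eq:low-frequency-contract}, \eqref{eq:low-frequency-at-tau} and \eqref{eq:g-Lambda-high} with subadditivity of $X\mapsto X^\theta$, absorb the cross term via the Young split $C_\kappa\sqrt\Lambda\le\frac{c_\kappa\theta}{2}L\Lambda+\frac{C_\kappa}{L}$ (powers of $L^{-1}$ going into $e^{C_\kappa/L}$), and dominate the high-frequency part by \eqref{eq:high-final}, exactly as the paper does.
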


\begin{lemma}
\label{lem:frequency-optimization}
Let $0<L\le1$ and $0\le Y\le M$. Assume that, for every $\Lambda\ge1$,
\begin{equation}
 Y
 \le Ce^{C/L+C\sqrt\Lambda}O^\theta M^{1-\theta}
 +Ce^{C/L-cL\Lambda}M,
 \qquad 0<\theta<1.
 \label{eq:frequency-optimization-assumption}
\end{equation}
Then there are $\gamma\in(0,\theta)$ and $C>0$ such that
\begin{equation}
 Y\le Ce^{C/L}O^\gamma M^{1-\gamma}.
 \label{eq:frequency-optimized}
\end{equation}
\end{lemma}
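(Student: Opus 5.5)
We optimize the free parameter $\Lambda$. The trivial cases come first. If $O=0$, letting $\Lambda\to\infty$ in \eqref{eq:frequency-optimization-assumption} kills the second term and gives $Y=0$. If $M=0$, then $Y=0$. So assume $O,M>0$ and set $\varepsilon=O/M$. If $\varepsilon\ge1$, we use $Y\le M\le O^\gamma M^{1-\gamma}$ and are done. Hence the interesting regime is $0<\varepsilon<1$. Dividing by $M$, the hypothesis reads
\[
 Y/M\le Ce^{C/L}\bigl(e^{C\sqrt\Lambda}\varepsilon^\theta+e^{-cL\Lambda}\bigr),\qquad \Lambda\ge1 .
\]

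First we absorb $\sqrt\Lambda$ into $L\Lambda$ by Young's inequality:
\[
 C\sqrt\Lambda\le \tfrac{c}{2}L\Lambda+\tfrac{C^2}{2cL}.
\]
This turns the first term into $e^{C'/L}e^{cL\Lambda/2}\varepsilon^\theta$. Writing $X=L\Lambda$, we obtain
\[
 Y/M\le C e^{C'/L}\bigl(e^{cX/2}\varepsilon^\theta+e^{-cX}\bigr),
\]
valid for every $X\ge L$. We then choose $X$ to balance the two terms, namely $e^{3cX/2}=\varepsilon^{-\theta}$, that is, $X=\frac{2\theta}{3c}\log(1/\varepsilon)$. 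Both terms then equal $\varepsilon^{2\theta/3}$, which gives
\[
 Y\le Ce^{C'/L}\varepsilon^{2\theta/3}M=Ce^{C'/L}O^{\gamma}M^{1-\gamma},\qquad \gamma=2\theta/3\in(0,\theta).
\]

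The only obstacle is the constraint $\Lambda\ge1$, i.e.\ $X\ge L$. Since $L\le1$, it suffices that the balancing $X$ be at least $1$. This holds when $\varepsilon\le\varepsilon_0:=e^{-3c/(2\theta)}$, a threshold depending only on $c$ and $\theta$, not on $L$. In the complementary range $\varepsilon_0<\varepsilon<1$ we instead use the trivial bound
\[
 Y\le M\le \varepsilon_0^{-\gamma}\varepsilon^\gamma M=\varepsilon_0^{-\gamma}O^\gamma M^{1-\gamma},
\]
and $\varepsilon_0^{-\gamma}$ is absorbed into $C$. Combining the cases yields \eqref{eq:frequency-optimized}. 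The constants are all uniform in $L\in(0,1]$, because the $L$-dependence appears only through the prefactor $e^{C'/L}$.
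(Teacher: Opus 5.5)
Your argument is correct and follows essentially the same route as the paper. Both take $\Lambda\asymp\log(M/O)/L$, absorb $C\sqrt\Lambda$ by Young's inequality into a fraction of $cL\Lambda$ plus an $O(1/L)$ exponent, and handle the regime where this would force $\Lambda<1$ (equivalently $O/M$ bounded below) with the trivial bound $Y\le M$. The only difference is cosmetic: the paper fixes an arbitrary $\gamma\in(0,\theta)$ and takes $\Lambda=\max\{1,\alpha\log(M/O)/L\}$ with $\alpha$ large, whereas you balance the two terms exactly and obtain the explicit exponent $\gamma=2\theta/3$ (reweighting Young's inequality would give any $\gamma<\theta$).
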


\begin{proof}
If $M=0$, then $Y=0$. If $O=0$, let $\Lambda\to\infty$ in \eqref{eq:frequency-optimization-assumption}. Assume $M,O>0$ and set $q=O/M$. If $q\ge1$, then $Y\le M\le q^\gamma M$.

Let $0<q<1$ and put $R=\log(1/q)$. Fix $\gamma\in(0,\theta)$ and choose
\[
 \Lambda=\max\left\{1,\frac{\alpha R}{L}\right\},
\]
where $\alpha$ will be fixed. If the second term is active, then
\[
 C\sqrt\Lambda
 \le\frac{\theta-\gamma}{2}R+\frac{C_{\alpha,\theta,\gamma}}{L}.
\]
After division by $M$, the first term in \eqref{eq:frequency-optimization-assumption} is bounded by $Ce^{C/L}q^\gamma$. Taking $\alpha$ sufficiently large that $c\alpha\ge\gamma$ gives the same bound for the second term. If $\Lambda=1$, then $R\le L/\alpha\le1/\alpha$, so $q^\gamma$ is bounded below by a fixed positive number and $Y\le M$ proves the result.
\end{proof}

\begin{proposition}
\label{prop:one-step}
For every $\kappa>0$, there are $C_\kappa>0$ and $\gamma_\kappa\in(0,1)$ such that, whenever $0<b-a\le1$ and $K\subset\Gamma\times(a,b)$ satisfies
\[
 \abs K\ge\kappa S_g(\Gamma)(b-a),
\]
one has
\begin{equation}
 \norm{u(b)}_{L^2(\M)}
 \le C_\kappa\exp\left(\frac{C_\kappa}{b-a}\right)
 \norm{\partial_{\nu_g}u}_{L^2(K)}^{\gamma_\kappa}
 \norm{u(a)}_{L^2(\M)}^{1-\gamma_\kappa}.
 \label{eq:one-step}
\end{equation}
\end{proposition}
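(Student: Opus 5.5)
The plan is to combine Proposition~\ref{prop:bridge} with Lemma~\ref{lem:frequency-optimization}, passing from the auxiliary set $K_f$ back to $K$ by monotonicity of the observation integral. Set $L=b-a\in(0,1]$ and apply Lemma~\ref{lem:time-window} to $K$. This produces $\tau$, $r\asymp_\kappa L$, $F$ and $K_f\subset K$ satisfying \eqref{eq:tau-away}--\eqref{eq:Kf-measure}. The constants depend only on $\kappa$ and the fixed patch $\Gamma$.

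Because $F$ satisfies \eqref{eq:F-measure}, Corollary~\ref{cor:uniform-product} applies with $\eta=c_\kappa S_g(\Gamma)$, so its constant depends only on $\kappa$. Proposition~\ref{prop:bridge} therefore holds for every $\Lambda\ge1$ with constants $C_\kappa,c_\kappa,\theta=\theta_\kappa$ depending only on $\kappa$. Since $K_f\subset K$, we may replace $\norm{\partial_{\nu_g}u}_{L^2(K_f)}$ by $\norm{\partial_{\nu_g}u}_{L^2(K)}$ in \eqref{eq:bridge}.

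Next we apply Lemma~\ref{lem:frequency-optimization} with
\[
 Y=\norm{u(b)}_{L^2(\M)},\qquad M=\norm{u(a)}_{L^2(\M)},\qquad O=\norm{\partial_{\nu_g}u}_{L^2(K)}.
\]
Its hypothesis $0\le Y\le M$ holds because $e^{-tA}$ is a contraction. Hypothesis \eqref{eq:frequency-optimization-assumption} is exactly \eqref{eq:bridge}, with $C=C_\kappa$, $c=c_\kappa$ and $L\le1$. The lemma then gives $\gamma_\kappa\in(0,\theta_\kappa)$ and the bound $Y\le C e^{C/L}O^{\gamma_\kappa}M^{1-\gamma_\kappa}$, which is \eqref{eq:one-step}.

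The degenerate cases are handled inside the lemma. If $O=\infty$, the estimate is trivial in the extended sense. If $O=0$, letting $\Lambda\to\infty$ in \eqref{eq:bridge} gives $u(b)=0$.

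The main point to check is uniformity: the constants must not depend on the particular $K$, $F$ or $\tau$, but only on $\kappa$. For the spectral step this is guaranteed by the geometry-independence in Corollary~\ref{cor:uniform-product}. For the transfer step it follows from the scale-invariant form of Lemma~\ref{lem:analytic-jet}, where the constants depend only on the fiber density bound \eqref{eq:fiber-measure}, together with $r\asymp_\kappa L$.

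Finally, the powers of $L^{-1}$ coming from Lemma~\ref{lem:majorant} and from the factor $e^{s^2/(c_\kappa r)}$ in Proposition~\ref{prop:transfer} are absorbed into $e^{C_\kappa/L}$. This absorption uses $L\le1$, which is why the proposition restricts to $b-a\le1$.
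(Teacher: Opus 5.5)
Your proposal is correct and follows the paper's proof: feed Proposition~\ref{prop:bridge} into Lemma~\ref{lem:frequency-optimization} with $Y=\norm{u(b)}$, $M=\norm{u(a)}$, and contractivity supplying $Y\le M$, then use $K_f\subset K$ to pass to the observation on $K$. Passing from $K_f$ to $K$ before the optimization rather than after, as the paper does, is harmless because both the hypothesis and the conclusion are monotone in $O$; however, your closing explanation of why $b-a\le1$ is needed is slightly off. Absorbing fixed powers of $L^{-1}$ into $e^{C_\kappa/L}$ works for every $L>0$, and the restriction $b-a\le1$ is actually used in the case $\Lambda=1$ of Lemma~\ref{lem:frequency-optimization}.
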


\begin{proof}
Apply Lemma~\ref{lem:frequency-optimization} to Proposition~\ref{prop:bridge} with
\[
 Y=\norm{u(b)}_{L^2(\M)},
 \quad
 M=\norm{u(a)}_{L^2(\M)},
 \quad
 O=\norm{\partial_{\nu_g}u}_{L^2(K_f)},
 \quad
 L=b-a.
\]
Contractivity gives $Y\le M$. Finally, $K_f\subset K$ implies
\[
 \norm{\partial_{\nu_g}u}_{L^2(K_f)}
 \le\norm{\partial_{\nu_g}u}_{L^2(K)}.
\]
\end{proof}

\begin{proof}[Proof of Theorem~\ref{thm:heat-observability}]
Cover $\partial\M$ by finitely many sufficiently small boundary patches. For at least one patch $\Gamma\Subset\partial\M$,
\[
 J^\Gamma=J\cap(\Gamma\times(0,T))
\]
has positive measure. For almost every $t$, define
\[
 J_t=\{x\in\Gamma:(x,t)\in J^\Gamma\},
 \qquad q(t)=S_g(J_t),
\]
and set
\begin{equation}
 m_0=\frac{\abs{J^\Gamma}}{2T}>0,
 \qquad
 E=\{t\in(0,T):q(t)\ge m_0\}.
 \label{eq:E-times}
\end{equation}
Fubini gives
\[
 \abs{J^\Gamma}
 \le S_g(\Gamma)\abs E+m_0T
 =S_g(\Gamma)\abs E+\frac{\abs{J^\Gamma}}2,
\]
so
\begin{equation}
 \abs E\ge\frac{\abs{J^\Gamma}}{2S_g(\Gamma)}>0.
 \label{eq:E-positive}
\end{equation}
Set
\[
 \kappa_0=\frac{m_0}{2S_g(\Gamma)}.
\]
Let $C>0$ and $\gamma\in(0,1)$ be the constants in Proposition~\ref{prop:one-step} for $\kappa_0$, and put
\[
 \alpha=\frac{1-\gamma}{\gamma}.
\]
Choose $z>1$ sufficiently close to one that
\begin{equation}
 \alpha(z-1)<\frac14.
 \label{eq:z-choice}
\end{equation}
Choose a right Lebesgue density point $\ell\in(0,T)$ of $E$. With
\[
 \varepsilon=\frac{1-z^{-1}}4,
\]
choose $\ell_1\in(\ell,T)$ so close to $\ell$ that $\ell_1-\ell<1$ and
\[
 \abs{(\ell,\ell+h)\setminus E}\le\varepsilon h
 \quad\text{for every }0<h\le\ell_1-\ell.
\]
Define
\begin{equation}
 \ell_m=\ell+z^{-(m-1)}(\ell_1-\ell),
 \qquad m\ge1,
 \label{eq:ell-m}
\end{equation}
and
\begin{equation}
 L_m=\ell_m-\ell_{m+1}
 =(1-z^{-1})z^{-(m-1)}(\ell_1-\ell),
 \qquad
 L_{m+1}=z^{-1}L_m.
 \label{eq:L-m}
\end{equation}
Since
\[
 \ell_m-\ell=\frac{L_m}{1-z^{-1}},
\]
the density estimate gives
\begin{equation}
 \abs{E\cap(\ell_{m+1},\ell_m)}
 \ge L_m-\varepsilon(\ell_m-\ell)
 \ge\frac12L_m.
 \label{eq:E-block}
\end{equation}
Define the pairwise disjoint observation blocks
\begin{equation}
 K_m=J^\Gamma\cap\bigl(\Gamma\times(\ell_{m+1},\ell_m)\bigr).
 \label{eq:K-m}
\end{equation}
Then
\begin{equation}
 \abs{K_m}
 =\int_{\ell_{m+1}}^{\ell_m}q(t)\dd t
 \ge m_0\abs{E\cap(\ell_{m+1},\ell_m)}
 \ge\kappa_0S_g(\Gamma)L_m.
 \label{eq:K-m-density}
\end{equation}
Thus Proposition~\ref{prop:one-step} applies on every interval with constants independent of $m$. Set
\[
 Y_m=\norm{u(\ell_m)}_{L^2(\M)},
 \qquad
 O_m=\norm{\partial_{\nu_g}u}_{L^2(K_m)}.
\]
Then
\begin{equation}
 Y_m\le Ce^{C/L_m}O_m^\gamma Y_{m+1}^{1-\gamma}.
 \label{eq:Y-m}
\end{equation}
Weighted Young's inequality gives, for every $\varepsilon_m>0$,
\begin{equation}
 Y_m
 \le\varepsilon_mY_{m+1}
 +Ce^{C_0/L_m}\varepsilon_m^{-\alpha}O_m,
 \qquad
 \alpha=\frac{1-\gamma}{\gamma}.
 \label{eq:weighted-Young}
\end{equation}
Choose $K_0>0$ large and define
\begin{equation}
 w_m=e^{-K_0/L_m},
 \qquad
 \varepsilon_m=e^{-K_0(z-1)/L_m}.
 \label{eq:weights}
\end{equation}
By \eqref{eq:L-m},
\begin{equation}
 w_m\varepsilon_m
 =e^{-K_0z/L_m}
 =e^{-K_0/L_{m+1}}
 =w_{m+1}.
 \label{eq:weight-identity}
\end{equation}
Multiplying \eqref{eq:weighted-Young} by $w_m$ gives
\begin{equation}
 w_mY_m-w_{m+1}Y_{m+1}
 \le Ce^{-D/L_m}O_m,
 \qquad
 D=K_0-C_0-\alpha K_0(z-1)>0,
 \label{eq:telescope-step}
\end{equation}
where positivity follows from \eqref{eq:z-choice} for $K_0$ sufficiently large. Summing from $m=1$ to $N$,
\begin{equation}
 w_1Y_1-w_{N+1}Y_{N+1}
 \le C\sum_{m=1}^Ne^{-D/L_m}O_m.
 \label{eq:telescope-finite}
\end{equation}
Since $L_m\downarrow0$, $w_m\downarrow0$, and contractivity gives $Y_m\le\norm{u_0}_{L^2(\M)}$, the terminal term tends to zero. The blocks $K_m$ are disjoint, so Cauchy--Schwarz gives
\begin{align}
 \sum_{m=1}^\infty e^{-D/L_m}O_m
 &\le\left(\sum_{m=1}^\infty e^{-2D/L_m}\right)^{1/2}
 \left(\sum_{m=1}^\infty O_m^2\right)^{1/2}
 \notag\\
 &\le C_{D,L_1,z}\norm{\partial_{\nu_g}u}_{L^2(J)}.
 \label{eq:weighted-sum}
\end{align}
The first series converges superexponentially because $L_m=L_1z^{-(m-1)}$. Letting $N\to\infty$ in \eqref{eq:telescope-finite} and dividing by the fixed number $w_1=e^{-K_0/L_1}>0$ yields
\begin{equation}
 \norm{u(\ell_1)}_{L^2(\M)}
 \le C_{J,T}\norm{\partial_{\nu_g}u}_{L^2(J)}.
 \label{eq:ell-one-observation}
\end{equation}
Finally, $\ell_1<T$ and the heat semigroup is contractive, so
\[
 \norm{u(T)}_{L^2(\M)}\le\norm{u(\ell_1)}_{L^2(\M)}.
\]
Squaring \eqref{eq:ell-one-observation} proves Theorem~\ref{thm:heat-observability}.
\end{proof}

\section{Proof of Theorem~\ref{thm:localized}: the localized manifold theorem}
\label{sec:localized}

In this section we localize the Dirichlet argument on a compact connected Lipschitz Riemannian manifold whose boundary is $W^{2,\infty}$ only near the observed patch. Thus, in this section only, the global $W^{2,\infty}$ boundary regularity imposed in the preceding manifold results is relaxed. The global condition $(\mathrm{LR})_D$ supplies low-frequency spectral concentration, while the local $W^{2,\infty}$ hypothesis provides the boundary continuation, regularity, and conormal-trace estimates. These ingredients yield the localized boundary spectral inequality and the local interpolation estimate of Proposition~\ref{prop:local-one-step}. The time-analyticity, frequency-optimization, and telescoping arguments of Section~\ref{sec:heat} then apply without change and prove Theorem~\ref{thm:localized}.

Let $(\M,g)$ be a compact connected Lipschitz Riemannian manifold with nonempty boundary, equipped in a finite Lipschitz atlas with a symmetric uniformly elliptic Lipschitz metric. Denote by $A_D$ the nonnegative self-adjoint Dirichlet Laplacian associated with the closed form
\[
 \mathfrak a(v,w)=\int_\M \ip{\nabla_gv}{\nabla_gw}_g\dd V_g,
 \qquad
 D(\mathfrak a)=H_0^1(\M),
\]
and let $(\lambda_j,\varphi_j)_{j\ge1}$ be its $L^2(\M,dV_g)$-orthonormal eigenpairs. For $\Lambda>0$, write
\[
 P_\Lambda^Df=\sum_{\lambda_j\le\Lambda}(f,\varphi_j)_{L^2(\M)}\varphi_j.
\]
We shall use the following intrinsic Dirichlet Lebeau--Robbiano property, denoted by $(\mathrm{LR})_D$:
\begin{equation}
 \norm{P_\Lambda^Df}_{L^2(\M)}
 \le C_\omega e^{C_\omega\sqrt\Lambda}
 \norm{P_\Lambda^Df}_{L^2(\omega)}
 \label{eq:local-LR}
\end{equation}
for every fixed nonempty open set $\omega\Subset\M^\circ$, every $\Lambda\ge1$, and every $f\in L^2(\M)$. The constant $C_\omega$ may depend on $(\M,g)$ and $\omega$, but is uniform in $\Lambda$ and $f$. This is precisely the form of global spectral concentration needed in the proof below. In the Euclidean case $\M=\Omega\subset\R^n$, the interior-ball condition \cite[(1.4)]{ApraizEtAl} implies \eqref{eq:local-LR}: given $\omega\Subset\Omega$, choose $B_{4R}(x_*)\Subset\omega$ and apply that condition on $B_R(x_*)$. In particular, \cite[Theorem~3]{ApraizEtAl} yields \eqref{eq:local-LR} for bounded Lipschitz domains that are locally star-shaped.

\begin{lemma}
\label{lem:local-BM}
Assume \eqref{eq:local-LR} and let $\omega\Subset\M^\circ$ be a fixed nonempty open set. There exists $C_\omega>0$ such that, for every $\Lambda\ge1$ and every finite Dirichlet sum
\[
 f_\Lambda=\sum_{\lambda_j\le\Lambda}c_j\varphi_j,
\]
one has
\begin{equation}
 \sum_{\lambda_j\le\Lambda}\abs{c_j}^2
 \le C_\omega e^{C_\omega\sqrt\Lambda}
 \norm{f_\Lambda}_{L^2(\omega)}^2.
 \label{eq:local-open-spectral}
\end{equation}
Moreover, if $I_1\Subset I_s$ is a nondegenerate interval and
\[
 V_\Lambda(x,s)=\sum_{\lambda_j\le\Lambda}c_j\varphi_j(x)\cosh(\sqrt{\lambda_j}s),
\]
then
\begin{equation}
 \sum_{\lambda_j\le\Lambda}\abs{c_j}^2
 \le C_{\omega,I_1}e^{C_{\omega,I_1}\sqrt\Lambda}
 \norm{V_\Lambda}_{L^2(\omega\times I_1)}^2.
 \label{eq:local-product-spectral}
\end{equation}
\end{lemma}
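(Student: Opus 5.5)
The first estimate \eqref{eq:local-open-spectral} is a direct consequence of \eqref{eq:local-LR}. Given a finite Dirichlet sum $f_\Lambda=\sum_{\lambda_j\le\Lambda}c_j\varphi_j$, we have $P_\Lambda^Df_\Lambda=f_\Lambda$, so \eqref{eq:local-LR} applied with $f=f_\Lambda$ gives
\[
 \norm{f_\Lambda}_{L^2(\M)}\le C_\omega e^{C_\omega\sqrt\Lambda}\norm{f_\Lambda}_{L^2(\omega)}.
\]
Since $(\varphi_j)$ is $L^2(\M,dV_g)$-orthonormal, $\norm{f_\Lambda}_{L^2(\M)}^2=\sum_{\lambda_j\le\Lambda}\abs{c_j}^2$. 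Squaring and renaming the constant ($2C_\omega$ in the exponent, $C_\omega^2$ in front) yields \eqref{eq:local-open-spectral}. No regularity of $\partial\M$ enters here: self-adjointness of $A_D$ and the hypothesis $(\mathrm{LR})_D$ suffice. This replaces the Burq--Moyano input used in Lemma~\ref{lem:BM-open}.

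For \eqref{eq:local-product-spectral} I would repeat the proof of Lemma~\ref{lem:BM-product} verbatim. Fix $s\in I_1$. The slice
\[
 V_\Lambda(\cdot,s)=\sum_{\lambda_j\le\Lambda}c_j\cosh(\sqrt{\lambda_j}s)\varphi_j
\]
is again a finite Dirichlet sum with coefficients $c_j\cosh(\sqrt{\lambda_j}s)$. Applying \eqref{eq:local-open-spectral} to it and using $\cosh^2\ge1$ gives
\[
 \sum_{\lambda_j\le\Lambda}\abs{c_j}^2\le\sum_{\lambda_j\le\Lambda}\abs{c_j}^2\cosh^2(\sqrt{\lambda_j}s)\le C_\omega e^{C_\omega\sqrt\Lambda}\norm{V_\Lambda(\cdot,s)}_{L^2(\omega)}^2 .
\]
Next integrate over $s\in I_1$. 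The map $s\mapsto\norm{V_\Lambda(\cdot,s)}_{L^2(\omega)}^2$ is continuous, since the sum is finite, so Fubini applies and the right-hand side integrates to $C_\omega e^{C_\omega\sqrt\Lambda}\norm{V_\Lambda}_{L^2(\omega\times I_1)}^2$ with respect to $dV_g\,ds$. The left-hand side integrates to $\abs{I_1}\sum\abs{c_j}^2$. Dividing by $\abs{I_1}>0$ and absorbing $\abs{I_1}^{-1}$ into $C_{\omega,I_1}$ finishes the proof.

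There is no genuine obstacle. The only points to check are that $\omega\Subset\M^\circ$ is exactly the class of sets allowed in \eqref{eq:local-LR}, and that the constant is uniform in $s\in I_1$. The latter holds because the constant in \eqref{eq:local-open-spectral} is independent of the coefficients. The inequality $\cosh\ge1$ also means no upper bound on $\abs{s}$ is needed, so $I_1\Subset I_s$ is used only to keep the statement within the cylinder where $V_\Lambda$ is considered.
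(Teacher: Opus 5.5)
Your proof is correct and matches the paper's argument: apply \eqref{eq:local-LR} to $f_\Lambda=P_\Lambda^Df_\Lambda$, use orthonormality, and square. For the product estimate, apply the resulting bound slice-wise in $s$, use $\cosh^2(\sqrt{\lambda_j}s)\ge1$, integrate over $I_1$, and absorb $\abs{I_1}^{-1}$ into the constant.
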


\begin{proof}
Since $f_\Lambda=P_\Lambda^Df_\Lambda$, \eqref{eq:local-LR} gives
\[
 \norm{f_\Lambda}_{L^2(\M)}
 \le C_\omega e^{C_\omega\sqrt\Lambda}
 \norm{f_\Lambda}_{L^2(\omega)}.
\]
By orthonormality,
\[
 \norm{f_\Lambda}_{L^2(\M)}^2
 =\sum_{\lambda_j\le\Lambda}\abs{c_j}^2.
\]
Squaring and renaming the constant proves \eqref{eq:local-open-spectral}. For each fixed $s\in I_1$, apply \eqref{eq:local-open-spectral} to the low-frequency sum $V_\Lambda(\cdot,s)$, integrate in $s$, and use $\cosh^2(\sqrt{\lambda_j}s)\ge1$. Absorb $\abs{I_1}^{-1}$ into the constant.
\end{proof}

Let $\Gamma_0,\Gamma_1\subset\partial\M$ be relatively open and satisfy
\[
 \Gamma_0\Subset\Gamma_1
\]
in the relative topology of $\partial\M$. Assume there exists a neighborhood $U_1$ of $\Gamma_1$ in $\overline\M$ covered by finitely many boundary charts in which the boundary is represented by $W^{2,\infty}(C^{1,1})$ graphs, with uniform local constants. Since the metric is Lipschitz and uniformly elliptic, its coefficients in these charts have the same regularity class required by the local arguments of Section~\ref{sec:dirichlet}.

\begin{lemma}
\label{lem:local-geometry}
Under the preceding local $W^{2,\infty}$ assumption, the following conclusions hold with constants uniform for points of $\Gamma_0$.
\begin{enumerate}[label=\textnormal{(\roman*)}]
\item The capped-domain construction of Lemma~\ref{lem:cap} can be carried out at every point of $\Gamma_0\times I_*$, for every fixed $I_*\Subset I_s$.
\item The Dirichlet odd reflection of Lemma~\ref{lem:odd-reflection} has uniformly elliptic $W^{1,\infty}$ coefficients, and the measurable-boundary continuation estimate of Lemma~\ref{lem:BZ} applies on the resulting local capped domains.
\item If $K$ is a compact subset of $(U_0\cap\M)\times I_s$, where $U_0\Subset U_1$, and $K$ is separated from $s=\pm s_0$, then the Dirichlet extension satisfies
\begin{equation}
 \norm{V_\Lambda}_{C^{1,\sigma}(K)}
 \le C_Ke^{C_K\sqrt\Lambda}
 \left(\sum_{\lambda_j\le\Lambda}\abs{c_j}^2\right)^{1/2}
 \label{eq:local-C1sigma}
\end{equation}
for every fixed $0<\sigma<1$.
\end{enumerate}
\end{lemma}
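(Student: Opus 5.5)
The plan is to observe that every argument behind Lemmas~\ref{lem:cap}, \ref{lem:odd-reflection}, \ref{lem:BZ} and \ref{lem:regularity} is local. Each uses only the $W^{2,\infty}$ graph structure of the boundary and the Lipschitz metric in a fixed neighbourhood of the point considered, except for the final global energy input. So I would rerun those proofs inside the charts covering $U_1$, and use the compactness of $\overline{\Gamma_0}\subset\Gamma_1$ to get uniform constants.

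For (i), fix $x_0\in\overline{\Gamma_0}$ and a chart from the finite cover of $U_1$ containing a neighbourhood of $x_0$. By compactness, I choose a Lebesgue number $\rho_0>0$ such that every point of $\Gamma_0$ has a ball of radius $\rho_0$ lying in a single $W^{2,\infty}$ chart. The construction of Lemma~\ref{lem:cap} uses only:
\begin{enumerate}[label=\textnormal{(\alph*)}]
\item the graph function $\varphi$ and the metric on that chart;
\item the transversality bound \eqref{eq:metric-transversality};
\item the mollification estimates for $N_t$;
\item the scale-invariant cap \eqref{eq:cap-def}.
\end{enumerate}
All of these are controlled by the uniform local $W^{2,\infty}$ and Lipschitz constants. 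Taking $r_*<\rho_0/C$ with $C$ depending on $L_*$, the cap $\mathcal O_{Q_0,r}$ stays in the chart. Separation from $s=\pm s_0$ is uniform because $Q_0\in\Gamma_0\times I_*$ with $I_*\Subset I_s$.

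For (ii), the pulled-back coefficient matrix \eqref{eq:transformed-A} is built from $D\Phi\in W^{1,\infty}$ and the Lipschitz metric. It is therefore Lipschitz and uniformly elliptic, with block-diagonal structure on $\{y_d=0\}$, so the reflected matrix $A^e$ of \eqref{eq:reflection-matrix} is $W^{1,\infty}$. The weak odd-reflection computation in Lemma~\ref{lem:odd-reflection} is purely local. Lemma~\ref{lem:BZ} is itself a local statement on a $W^{2,\infty}$ domain, and the cap is a genuine bounded $W^{2,\infty}$ domain in $y$-coordinates. Hence Lemma~\ref{lem:BZ} applies there, with $r_0$ and the constants $C(r,m_0)$, $\alpha$ depending only on the uniform chart bounds.

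For (iii), I cover $K$ by finitely many interior balls and boundary half-balls, the latter centred on $\partial\M\cap\overline{U_0}$ with doubled radii inside $U_1$. This is possible since $U_0\Subset U_1$ and $K$ is separated from the end faces. On each ball I would apply the reflection and the Schauder estimate \eqref{eq:Schauder-local}, and use interior estimates elsewhere. This bounds $\norm{V_\Lambda}_{C^{1,\sigma}(K)}$ by $C_K\norm{V_\Lambda}_{L^2(\M\times I_s)}$.

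The one point needing care is the energy bound of Lemma~\ref{lem:energy}, because $\M$ is now only Lipschitz. However, its proof uses only orthonormality, $\cosh^2\le e^{2s_0\sqrt\Lambda}$, and the form identity $\norm{\nabla_g\varphi_j}^2=\lambda_j$. All three hold for the form-defined Dirichlet Laplacian on $H^1_0(\M)$, so no global regularity is needed. I must also check that $V_\Lambda\in H^1$ solves the equation weakly, which follows from $\varphi_j\in H^1_0$.

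I expect the main obstacle to be making the constants in (i)--(ii) genuinely uniform over $\Gamma_0$. In particular, the quantitative inverse function theorem for $\Phi$ needs the shrinking radius to depend only on the uniform $W^{2,\infty}$ and ellipticity bounds. I would handle this with the Lebesgue-number argument and a finite subcover of $\overline{\Gamma_0}$ by charts in which all bounds are fixed.
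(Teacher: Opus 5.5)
Your proposal is correct and follows essentially the same route as the paper. Both arguments cover $\overline{\Gamma_0}\subset\Gamma_1$ by finitely many $W^{2,\infty}$ charts, rerun the cap, blockwise-reflection and Schauder arguments chart by chart, and observe that the only global input, the energy bound for $V_\Lambda$, uses just orthonormality and the Dirichlet form identity on $H_0^1(\M)$. Your Lebesgue-number formulation of the uniformity, and your explicit check that $V_\Lambda$ is a weak solution, add a little detail beyond the paper's terse text but do not change the argument.
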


\begin{proof}
All three assertions are local. Since $\Gamma_0\Subset\Gamma_1$, compactness gives a finite family of $W^{2,\infty}$ boundary charts whose slightly enlarged coordinate neighborhoods remain in $U_1$. The metric-normal capping construction of Lemma~\ref{lem:cap} uses only one such chart and a scale smaller than its radius. Hence the cap agrees with $\M\times I_s$ near the chosen lateral point and, after pullback to the chart, is a bounded $W^{2,\infty}$ domain with uniform rescaled geometry.

In these metric-normal coordinates, the flattening map belongs to $W^{2,\infty}$, so the coefficient matrix of the transformed product operator belongs to $W^{1,\infty}$. As in \eqref{eq:reflection-matrix}, the tangential--normal blocks vanish on the flat boundary, and the blockwise even/odd extension remains Lipschitz. Uniform ellipticity is preserved. This is exactly the coefficient class required by the boundary continuation theorem.

For \eqref{eq:local-C1sigma}, cover $K$ by finitely many interior coordinate balls and boundary half-balls contained in $U_1\times I_s$. On a boundary half-ball, flatten and reflect as in Lemma~\ref{lem:odd-reflection}. The reflected function solves a uniformly elliptic divergence-form equation with Lipschitz coefficients. Fix $0<\sigma<1$. The reflected coefficient matrix belongs to $C^{0,\sigma}$ with a uniform norm, and the direct interior $C^{1,\sigma}$ estimate used in Lemma~\ref{lem:regularity} gives the local bound from the $L^2$ norm. The only global input is the energy of the finite spectral sum, which follows from orthogonality and the Dirichlet form identity and does not require global $W^{2,\infty}$ boundary regularity:
\[
 \norm{V_\Lambda}_{L^2(\M\times I_s)}
 \le Ce^{s_0\sqrt\Lambda}
 \left(\sum_{\lambda_j\le\Lambda}\abs{c_j}^2\right)^{1/2}.
\]
Taking the maximum of the finitely many chart constants proves the result.
\end{proof}

\begin{proposition}
\label{prop:local-spectral}
Assume that $(\M,g)$ is compact, connected, and globally Lipschitz, satisfies \eqref{eq:local-LR}, and that $\Gamma_0\Subset\Gamma_1$ are as above, with boundary of class $W^{2,\infty}$ near $\Gamma_1$. If $J\subset\Gamma_0\times I_s$ is measurable with $\abs J>0$, then there exists $C_J>0$ such that, for every $\Lambda\ge1$ and every finite family $\{a_j\}_{\lambda_j\le\Lambda}$,
\begin{equation}
 \sum_{\lambda_j\le\Lambda}\abs{a_j}^2
 \le C_Je^{C_J\sqrt\Lambda}
 \int_J\abs{\partial_{\nu_g}U_\Lambda}^2\dd S_g\dd s.
 \label{eq:local-boundary-spectral}
\end{equation}
\end{proposition}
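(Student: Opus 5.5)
We follow the proof of Theorem~\ref{thm:dir-spectral} verbatim and substitute local ingredients for each global one. There are three replacements: Lemma~\ref{lem:local-geometry} for Lemmas~\ref{lem:cap}, \ref{lem:odd-reflection} and~\ref{lem:regularity}; Lemma~\ref{lem:local-BM} for Lemma~\ref{lem:BM-product}; and the local $W^{2,\infty}$ charts near $\Gamma_1$ for the boundary charts used in Lemma~\ref{lem:density-cylinder}.

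Set $A_\Lambda=(\sum_{\lambda_j\le\Lambda}\abs{a_j}^2)^{1/2}$ and $O_J=\norm{\partial_{\nu_g}U_\Lambda}_{L^2(J)}$, and assume $A_\Lambda>0$. The proof then runs in five steps.

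\textbf{Step 1 (density cylinder).} Since $J\subset\Gamma_0\times I_s$ and the end faces are null, we pick a density point $(x_0,s_*)$ of $J$ with $x_0\in\Gamma_0$ and $\abs{s_*}<s_0$. In a $W^{2,\infty}$ chart contained in $U_1$, Lemma~\ref{lem:density-cylinder} produces a small coordinate cylinder $Q\Subset\Gamma_1\times I_s$ with $\abs{J\cap Q}\ge\tfrac12\abs Q$. Its argument is purely measure-theoretic inside one chart.

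\textbf{Step 2 (capped domain and low-value set).} By Lemma~\ref{lem:local-geometry}(i) we build the capped domain $\mathcal O_{Q_c,r}$ at $Q_c=(x_0,s_*)$, taking $r$ small enough that $\Phi(B_{4Lr}^+)$ lies in $(U_0\cap\M)\times I_s$. Lemma~\ref{lem:low-value} applied to $h=\partial_{\nu_g}U_\Lambda|_{S}$, with $S=J\cap Q$, gives $S_0\subset S$ with $\abs{S_0}\ge\abs S/2$ and $\esssup_{S_0}\abs{\partial_{\nu_g}U_\Lambda}\le C_JO_J$.

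\textbf{Step 3 (verifying the hypotheses of Lemma~\ref{lem:BZ}).} Lemma~\ref{lem:local-geometry}(iii) supplies the $W^{1,\infty}$ bound on a compact neighborhood of the closure of the capped region. The artificial cap boundary is interior to $\M\times I_s$, so interior regularity there is harmless. Lemma~\ref{lem:local-geometry}(ii) states that the Dirichlet continuation estimate \eqref{eq:BZ-D} applies on this domain.

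\textbf{Step 4 (propagation and spectral concentration).} Combining \eqref{eq:BZ-D} with \eqref{eq:local-C1sigma} gives, for a fixed interior half-cylinder $K$,
\[
 \norm{U_\Lambda}_{L^\infty(K)}\le C_JO_J^{\alpha_J}\bigl(e^{C\sqrt\Lambda}A_\Lambda\bigr)^{1-\alpha_J}.
\]
We then choose $\omega\times I_1\Subset K$ with $\omega\Subset\M^\circ$, which is possible because $K$ is an interior region of positive size. Estimate \eqref{eq:local-product-spectral} of Lemma~\ref{lem:local-BM} then yields $A_\Lambda\le Ce^{C\sqrt\Lambda}\norm{U_\Lambda}_{L^\infty(K)}$. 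This is the only place where the global hypothesis \eqref{eq:local-LR} enters.

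\textbf{Step 5 (closing).} The two displays combine to $A_\Lambda\le C_Je^{C_J\sqrt\Lambda}O_J^{\alpha_J}A_\Lambda^{1-\alpha_J}$. Dividing by $A_\Lambda^{1-\alpha_J}$, raising to the power $1/\alpha_J$ and squaring gives \eqref{eq:local-boundary-spectral}.

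The step most likely to cause trouble is the regularity bound \eqref{eq:local-C1sigma} used in Step~3, because the global energy estimate must not secretly depend on global $W^{2,\infty}$ regularity. In Lemma~\ref{lem:energy}, the $L^2$ bound used only orthonormality. The $H^1$ bound used only the form identity $\norm{\nabla_g\varphi_j}^2=\lambda_j$, which holds on any Lipschitz manifold for the form domain $H_0^1$. The local $C^{1,\sigma}$ estimate needs only the $L^2$ norm of $U_\Lambda$ on a neighborhood inside $U_1\times I_s$. Consequently no constant depends on the rough part of $\partial\M$.

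A second point to check is that the fixed interior region $K$ produced in Step~4 meets $\M^\circ$ in an open set. This holds because $K=\Phi(B_{r/4}^+)$-type sets contain points at height comparable to $r$. If the uniform-product version is needed later, as in Corollary~\ref{cor:uniform-product}, we would cover $\overline{\Gamma_0}\times I_*$ by finitely many such cylinders and take the maximum of the local constants.
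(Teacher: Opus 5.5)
Your proposal is correct and follows the paper's proof step for step. The paper's sequence is the same as yours: a density cylinder and low-value subset $S_0$, then the Dirichlet estimate of Lemma~\ref{lem:BZ} on the local capped domain combined with the localized $C^{1,\sigma}$ bound of Lemma~\ref{lem:local-geometry}(iii), then the interior concentration of Lemma~\ref{lem:local-BM} (the only use of \eqref{eq:local-LR}), and finally division by $A_\Lambda^{1-\alpha_J}$.

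One small thing to tidy is the order of choices. Fix the cap scale $r$ first, and only then shrink the density cylinder $Q$ about $(x_0,s_*)$ into $\Phi(B_{r/4}\cap\{y_d=0\})$; this costs nothing because the density ratio tends to one as the cylinder shrinks.
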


\begin{proof}
Set
\[
 A_\Lambda=\left(\sum_{\lambda_j\le\Lambda}\abs{a_j}^2\right)^{1/2},
 \qquad
 O_J=\norm{\partial_{\nu_g}U_\Lambda}_{L^2(J)}.
\]
The claim is immediate if $A_\Lambda=0$, so assume $A_\Lambda>0$.

\emph{Step 1.}
Choose a density point $(x_0,s_*)$ of $J$. Since $J\subset\Gamma_0\times I_s$ and $\Gamma_0$ is relatively open, the boundary space--time cylinder can be chosen so small that its closure lies in $\Gamma_0\times I_s$ and is separated from $s=\pm s_0$. Thus there exist a boundary coordinate cylinder $Q$, a local capped domain, and a set $S=J\cap Q$ with $\abs S>0$. Applying Lemma~\ref{lem:low-value} to $\partial_{\nu_g}U_\Lambda|_S$, we obtain $S_0\subset S$ such that
\[
 \abs{S_0}\ge\frac{\abs S}{2}>0,
 \qquad
 \esssup_{S_0}\abs{\partial_{\nu_g}U_\Lambda}
 \le C_JO_J.
\]
The lower bound for $\abs{S_0}$ depends only on the fixed set $J$ and the chosen cylinder, not on $\Lambda$ or the coefficients.

\emph{Step 2.}
By Lemma~\ref{lem:local-geometry}, the capped domain is $W^{2,\infty}$ in local coordinates and agrees with $\M\times I_s$ near $S_0$. Applying the Dirichlet part of Lemma~\ref{lem:BZ} to the pulled-back product metric and density, we obtain a fixed interior half-cylinder $K$, a slightly larger local set $K_1$, and $\alpha_J\in(0,1)$ such that
\[
 \sup_K\bigl(\abs{U_\Lambda}+\abs{\nabla_GU_\Lambda}\bigr)
 \le C_JO_J^{\alpha_J}
 \left(\sup_{K_1}\abs{\nabla_GU_\Lambda}\right)^{1-\alpha_J},
 \qquad G=g\oplus ds^2.
\]
Here, as in \eqref{eq:conormal-invariance}, the normalized conormal derivative is invariant under the boundary coordinate change, and the coordinate surface measure is uniformly equivalent to $dS_g\,ds$. The localized regularity estimate \eqref{eq:local-C1sigma} gives
\[
 \sup_{K_1}\abs{\nabla_GU_\Lambda}
 \le Ce^{C\sqrt\Lambda}A_\Lambda.
\]
Consequently,
\begin{equation}
 \norm{U_\Lambda}_{L^\infty(K)}
 \le C_JO_J^{\alpha_J}
 \left(e^{C\sqrt\Lambda}A_\Lambda\right)^{1-\alpha_J}.
 \label{eq:local-value-control}
\end{equation}

\emph{Step 3.}
Choose $\omega\times I_1\Subset K$, with $\omega\Subset\M^\circ$ nonempty. Lemma~\ref{lem:local-BM} gives
\[
 A_\Lambda
 \le Ce^{C\sqrt\Lambda}
 \norm{U_\Lambda}_{L^2(\omega\times I_1)}
 \le C_Ke^{C\sqrt\Lambda}
 \norm{U_\Lambda}_{L^\infty(K)}.
\]
Combining this with \eqref{eq:local-value-control},
\[
 A_\Lambda
 \le C_Je^{C_J\sqrt\Lambda}O_J^{\alpha_J}A_\Lambda^{1-\alpha_J}.
\]
Division by $A_\Lambda^{1-\alpha_J}$ and raising to the power $1/\alpha_J$ give
\[
 A_\Lambda\le C_Je^{C_J\sqrt\Lambda}O_J.
\]
Squaring proves \eqref{eq:local-boundary-spectral}.
\end{proof}

\begin{corollary}
\label{cor:local-product}
Under the hypotheses of Proposition~\ref{prop:local-spectral}, fix a nondegenerate interval $I_*\Subset I_s$. For every $\eta>0$, there exists $C_\eta>0$ such that, for every $\Lambda\ge1$, every finite family $\{a_j\}_{\lambda_j\le\Lambda}$, and every measurable $F\subset\Gamma_0$ satisfying $S_g(F)\ge\eta$,
\begin{equation}
 \sum_{\lambda_j\le\Lambda}\abs{a_j}^2
 \le C_\eta e^{C_\eta\sqrt\Lambda}
 \int_{F\times I_*}\abs{\partial_{\nu_g}U_\Lambda}^2\dd S_g\dd s.
 \label{eq:local-product}
\end{equation}
The constant is independent of the geometry of $F$.
\end{corollary}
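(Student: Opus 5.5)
The plan is to mirror the proof of Corollary~\ref{cor:uniform-product}, replacing every use of a global ingredient by its localized counterpart. First I fix, once and for all, a finite cover of $\overline{\Gamma_0}\times \overline{I_*}$ by small boundary coordinate cylinders $Q_1,\ldots,Q_N$. Each cylinder lies inside the $W^{2,\infty}$ charts near $\Gamma_1$, stays separated from $s=\pm s_0$, and is at an admissible scale for Lemma~\ref{lem:BZ}, as permitted by Lemma~\ref{lem:local-geometry}(i)--(ii). Since $\overline{\Gamma_0}$ is compact inside $\Gamma_1$, such a cover exists with uniform local constants. If $S_g(F)\ge\eta$, then $\abs{F\times I_*}\ge \eta\abs{I_*}$, so by pigeonholing some index $\ell$ satisfies $\abs{(F\times I_*)\cap Q_\ell}\ge \eta\abs{I_*}/N=:m_\eta$.

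Next I run Steps~1--3 of the proof of Proposition~\ref{prop:local-spectral} on the set $S=(F\times I_*)\cap Q_\ell$. Lemma~\ref{lem:low-value} produces $S_0\subset S$ with $\abs{S_0}\ge m_\eta/2$ and
\[
 \esssup_{S_0}\abs{\partial_{\nu_g}U_\Lambda}\le (4/m_\eta)^{1/2}\norm{\partial_{\nu_g}U_\Lambda}_{L^2(F\times I_*)}.
\]
I then pull back by the metric-normal chart attached to $Q_\ell$. This chart depends only on the fixed cover, not on $F$. Since $S_0$ lies in the admissible patch $\Gamma_r(Q_c)$ and has measure at least $m_\eta/2$ (after the comparability of the pulled-back measure with $dS_g\,ds$), the Dirichlet part of Lemma~\ref{lem:BZ} applies with $m_0\simeq m_\eta/2$. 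Its constants $C,\alpha$ depend only on $r$ and $m_0$, and not on the shape of $S_0$; this is exactly what makes the final constant independent of the geometry of $F$. Combining this with the localized regularity bound \eqref{eq:local-C1sigma} on a fixed compact set $K_1$ gives the analogue of \eqref{eq:local-value-control} on a fixed interior half-cylinder $K_\ell$.

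Finally I choose a fixed $\omega_\ell\times I_{1,\ell}\Subset K_\ell$ with $\omega_\ell\Subset\M^\circ$ and apply \eqref{eq:local-product-spectral} from Lemma~\ref{lem:local-BM}, which rests on $(\mathrm{LR})_D$. This gives
\[
 A_\Lambda\le C e^{C\sqrt\Lambda}O^{\alpha}A_\Lambda^{1-\alpha},
\]
and dividing and raising to the power $1/\alpha$ yields the estimate for index $\ell$. Taking the maximum of the constants over the finitely many indices $\ell=1,\ldots,N$ gives a $C_\eta$ that depends only on $\eta$, the cover, and $(\M,g)$.

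The main point to check is the uniformity claim: the continuation constants must depend on $S_0$ only through its measure lower bound. Lemma~\ref{lem:BZ} asserts exactly this, so the remaining work is the bookkeeping of measure comparability under the chart. A secondary check is that each $K_\ell$ lies in $(U_0\cap\M)\times I_s$ for some $U_0\Subset U_1$, so that \eqref{eq:local-C1sigma} applies. I would ensure this by choosing the cover radii smaller than the distance from $\Gamma_0$ to the complement of $\Gamma_1$.
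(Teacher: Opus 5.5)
Your proposal is correct and follows the paper's proof essentially step for step: you fix a finite cover of $\Gamma_0\times I_*$ by admissible cylinders inside $\Gamma_1\times I_s$, pigeonhole to a portion of measure at least $m_\eta$, and use the low-value selection to keep $m_\eta/2$ so that the constants of Lemma~\ref{lem:BZ} are uniform; you then apply the localized regularity bound and $(\mathrm{LR})_D$ on a fixed interior set $\omega\Subset\M^\circ$ and take the maximum over the finitely many cylinders. Your explicit checks on measure comparability under the chart and on placing the sets $K_\ell$ inside $(U_0\cap\M)\times I_s$ are bookkeeping that the paper leaves implicit.
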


\begin{proof}
Cover $\Gamma_0\times I_*$ by finitely many boundary space--time cylinders whose doubled cylinders remain in $\Gamma_1\times I_s$ and whose local $W^{2,\infty}$ and metric constants are uniformly controlled. Since $\abs{F\times I_*}\ge\eta\abs{I_*}$, one cylinder contains a portion of $F\times I_*$ of measure at least a fixed number $m_\eta>0$. The low-value selection retains at least $m_\eta/2$, so all constants in the measurable-boundary continuation estimate are uniform. On each candidate cylinder the proof of Proposition~\ref{prop:local-spectral} produces a fixed nonempty interior set $\omega\Subset\M^\circ$; the corresponding instance of \eqref{eq:local-LR} supplies the required spectral concentration. Taking the maximum of the finitely many constants proves the result.
\end{proof}

The global manifold proof of Section~\ref{sec:heat} used a global $H^2$ regularity statement and a global conormal-trace estimate. Neither statement is needed here. Only the conormal trace on the observed regular patch has to be controlled.

\begin{lemma}
\label{lem:local-normal-trace}
Let $(\M,g)$ be a compact connected Lipschitz Riemannian manifold with uniformly elliptic Lipschitz metric, and let $\Gamma_0\Subset\Gamma_1\subset\partial\M$. Assume that the boundary is $W^{2,\infty}$ near $\Gamma_1$. Then there exists $C>0$ such that
\begin{equation}
 \norm{\partial_{\nu_g}v}_{L^2(\Gamma_0,dS_g)}
 \le C\norm{A_Dv}_{L^2(\M)},
 \qquad v\in D(A_D).
 \label{eq:local-normal-trace}
\end{equation}
\end{lemma}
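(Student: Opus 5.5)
The plan is to localize the proof of Lemma~\ref{lem:normal-trace}: only a neighborhood of $\overline{\Gamma_0}$ is used, and $H^2$ regularity is needed only there. Set $F=A_Dv\in L^2(\M)$.

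\textbf{Step 1: global bounds without regularity.} The form identity and positivity of $\lambda_1$ give
\[
\norm{\nabla_g v}_{L^2(\M)}^2=(F,v)_{L^2(\M)}\le\norm F\norm v,
\qquad
\norm v_{L^2(\M)}\le\lambda_1^{-1}\norm F_{L^2(\M)}.
\]
Hence
\[
\norm v_{H^1(\M)}\le C\norm{A_Dv}_{L^2(\M)}.
\]
This uses only the Lipschitz manifold structure, namely the closed form on $H^1_0(\M)$ and the Poincaré inequality on a compact connected Lipschitz manifold.

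\textbf{Step 2: localization.} Since $\overline{\Gamma_0}$ is compact in $\Gamma_1$, cover it by finitely many $W^{2,\infty}$ boundary charts whose enlarged neighborhoods lie in $U_1$. Choose cutoffs $\chi_i$ that equal $1$ on a neighborhood of the part of $\Gamma_0$ in the $i$th chart and are supported in the $i$th enlarged chart. In each chart, flatten the boundary with a $W^{2,\infty}$ map, for instance the metric-normal map of Lemma~\ref{lem:cap}, restricted to the spatial variables without the $s$ factor. The equation then becomes a divergence-form equation $-\kappa^{-1}\partial_i(A^{ij}\partial_j w)=F\circ\Phi$ with Lipschitz, uniformly elliptic $A$, and the tangential--normal blocks of $A$ vanish on $\{y_n=0\}$. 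Odd reflection of $w$ and of the right-hand side, with $A$ reflected as in \eqref{eq:reflection-matrix}, gives an $H^1$ weak solution in a full ball; the argument is that of Lemma~\ref{lem:odd-reflection}, and it uses only the zero trace. The interior $H^2$ estimate for divergence-form operators with $W^{1,\infty}$ coefficients (difference quotients, \cite[Chapter~8]{GilbargTrudinger}) then yields
\[
\norm w_{H^2(B_{\rho/2})}\le C\bigl(\norm{F}_{L^2}+\norm w_{H^1(B_\rho)}\bigr).
\]
Pulling back and using Step~1 gives $\norm{\chi_iv}_{H^2}\le C\norm{A_Dv}_{L^2(\M)}$, with constants depending only on the fixed charts.

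\textbf{Step 3: trace.} On each chart, $\nabla(\chi_i v)\in H^1$ of the half-ball, so its trace lies in $L^2$ (indeed in $H^{1/2}$) of the flat boundary. The normal $\nu_g$ is Lipschitz near $\Gamma_1$, since the boundary is $W^{2,\infty}$ there and $g$ is Lipschitz, and $dS_g$ is comparable to coordinate measure. On the part of $\Gamma_0$ in chart $i$ we have $\chi_i=1$, so summing over the finite cover gives
\[
\norm{\partial_{\nu_g}v}_{L^2(\Gamma_0)}\le C\sum_i\norm{\chi_iv}_{H^2}\le C\norm{A_Dv}_{L^2(\M)}.
\]

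\textbf{Main obstacle.} The delicate step is Step~2, because there is no global $H^2$ regularity to fall back on. One must check that the difference-quotient argument closes with only the local $L^2$ forcing and the global $H^1$ bound as inputs. In particular, the commutator terms produced by the cutoffs, $\nabla\chi_i\cdot A\nabla v$ and $\partial_j(A^{ij}\partial_i\chi_i\, v)$, must be bounded by $\norm{v}_{H^1}$. This holds because $\chi_i$ is smooth and $A$ is Lipschitz, so these terms lie in $L^2$ with bounds depending only on the chart.
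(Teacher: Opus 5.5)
Your proposal is correct and follows the paper's proof essentially step for step. The paper likewise bounds $\norm{v}_{H^1(\M)}$ by $\norm{A_Dv}_{L^2(\M)}$ using the form identity and $\lambda_1>0$. It obtains local $H^2$ regularity near $\Gamma_0$ either by citing local boundary regularity or, as its stated equivalent and exactly as you do, by metric-normal flattening, Dirichlet odd reflection, and the interior $H^2$ estimate with Lipschitz coefficients. It then concludes with the local trace theorem and the Lipschitz metric normal. The cutoff commutator terms you flag as the main obstacle need no separate treatment, since the interior estimate for the reflected equation is already localized.
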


\begin{proof}
Choose boundary neighborhoods $U_0,U_1$ of $\Gamma_0,\Gamma_1$ such that $U_0\Subset U_1$ and the boundary is $W^{2,\infty}$ throughout $U_1$. If $v\in D(A_D)$, then $v\in H_0^1(\M)$ and, in the weak sense,
\[
 -\Delta_gv=A_Dv\in L^2(\M).
\]
In every boundary chart contained in $U_1$, this equation has divergence form with coefficient matrix $\sqrt{\abs g}\,g^{ij}\in W^{1,\infty}$ and an $L^2$ right-hand side. Standard local boundary $H^2$ regularity on a $W^{2,\infty}$ graph patch therefore gives, after summing over a finite cover,
\begin{equation}
 \norm v_{H^2(U_0\cap\M)}
 \le C\left(
  \norm{A_Dv}_{L^2(U_1\cap\M)}
  +\norm v_{H^1(\M)}
 \right),
 \label{eq:local-H2}
\end{equation}
where the local Sobolev norm is understood in the fixed boundary charts. Equivalently, one may flatten the boundary with the metric-normal coordinates of Lemma~\ref{lem:cap}, apply the Dirichlet odd reflection, and use the interior $H^2$ estimate for the reflected equation with Lipschitz coefficients and an $L^2$ right-hand side. No boundary regularity outside $U_1$ enters the argument.

The local trace theorem, the Lipschitz regularity of the metric normal, and \eqref{eq:local-H2} imply
\begin{equation}
 \norm{\partial_{\nu_g}v}_{L^2(\Gamma_0,dS_g)}
 \le C\left(
  \norm{A_Dv}_{L^2(\M)}
  +\norm v_{H^1(\M)}
 \right).
 \label{eq:local-trace-intermediate}
\end{equation}
Since $\M$ is compact, connected, and has nonempty Dirichlet boundary, the first Dirichlet eigenvalue is positive. The spectral theorem gives
\[
 \norm v_{L^2(\M)}
 \le\lambda_1^{-1}\norm{A_Dv}_{L^2(\M)}.
\]
Moreover, the Dirichlet form identity yields
\[
 \norm{\nabla_g v}_{L^2(\M)}^2
 =\ip{A_Dv}{v}_{L^2(\M)}
 \le\norm{A_Dv}_{L^2(\M)}\norm v_{L^2(\M)}
 \le\lambda_1^{-1}\norm{A_Dv}_{L^2(\M)}^2.
\]
Thus $\norm v_{H^1(\M)}\le C\norm{A_Dv}_{L^2(\M)}$, and \eqref{eq:local-normal-trace} follows from \eqref{eq:local-trace-intermediate}.
\end{proof}

\begin{remark}
\label{rem:local-domain}
The point of Lemma~\ref{lem:local-normal-trace} is that one never needs global $H^2$ regularity for the full Dirichlet operator domain, which may fail under merely Lipschitz boundary regularity. The heat proof observes only on $\Gamma_0$, and the local $W^{2,\infty}$ regularity near $\Gamma_1$ is exactly what is needed to define and estimate that local conormal trace.
\end{remark}

Let $u(t)=e^{-tA_D}u_0$. The arguments of Section~\ref{sec:heat} remain valid after two replacements: Corollary~\ref{cor:local-product} is used in place of the global product-set spectral inequality, and Lemma~\ref{lem:local-normal-trace} replaces the global conormal-trace estimate.

\begin{proposition}
\label{prop:local-one-step}
Assume \eqref{eq:local-LR} and the local $W^{2,\infty}$ hypotheses on $\Gamma_0\Subset\Gamma_1$. For every $\kappa>0$, there are $C_\kappa>0$ and $\gamma_\kappa\in(0,1)$ such that, whenever $0<b-a\le1$ and $K\subset\Gamma_0\times(a,b)$ is measurable with
\[
 \abs K\ge\kappa S_g(\Gamma_0)(b-a),
\]
one has
\begin{equation}
 \norm{u(b)}_{L^2(\M)}
 \le C_\kappa\exp\left(\frac{C_\kappa}{b-a}\right)
 \norm{\partial_{\nu_g}u}_{L^2(K)}^{\gamma_\kappa}
 \norm{u(a)}_{L^2(\M)}^{1-\gamma_\kappa}.
 \label{eq:local-one-step}
\end{equation}
\end{proposition}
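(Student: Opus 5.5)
The plan is to repeat the argument of Section~\ref{sec:heat} that led from Lemma~\ref{lem:time-window} to Proposition~\ref{prop:one-step}. Throughout, $\Gamma$ is replaced by $\Gamma_0$, and two substitutions are made: Corollary~\ref{cor:local-product} replaces Corollary~\ref{cor:uniform-product}, and Lemma~\ref{lem:local-normal-trace} replaces Lemma~\ref{lem:normal-trace}. Because $\Gamma_0\Subset\partial\M$ is relatively open with finite positive measure, Lemma~\ref{lem:time-window} applies to $K\subset\Gamma_0\times(a,b)$ verbatim; it is pure measure theory on $\Gamma_0\times(a,b)$. It produces $\tau$, $r\asymp_\kappa L$, a set $F\subset\Gamma_0$ with $S_g(F)\ge c_\kappa S_g(\Gamma_0)$, and $K_f$ with $\tau-a\ge8r$ and fibers satisfying \eqref{eq:fiber-measure}.

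\textbf{Step 1 (analytic majorant on $\Gamma_0$).} Define $g_\Lambda=\partial_{\nu_g}e^{-(t-a)A_D}P^D_\Lambda f$ only on $\Gamma_0$. By Lemma~\ref{lem:local-normal-trace},
\[
 \norm{\partial_t^kg_\Lambda(\cdot,\tau)}_{L^2(\Gamma_0)}\le C\norm{A_D^{k+1}e^{-(\tau-a)A_D}P^D_\Lambda f}_{L^2(\M)}\le C\frac{(k+1)!}{(\tau-a)^{k+1}}\norm f_{L^2(\M)} .
\]
Hence the majorant $M_\Lambda$ of Lemma~\ref{lem:majorant} satisfies \eqref{eq:M-Lambda-bound} with $L^2(\Gamma_0)$ in place of $L^2(\partial\M)$. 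The time-jet identity of Lemma~\ref{lem:time-jet-identity} is a finite modal computation, so it holds in $L^2(\Gamma_0)$; the conormal traces of the eigenfunctions on $\Gamma_0$ exist by the local regularity. The transfer estimate of Proposition~\ref{prop:transfer} then follows exactly as before, since its proof is pointwise in $x\in F$ followed by H\"older in $x$. This gives \eqref{eq:transfer} with $F\subset\Gamma_0$.

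\textbf{Step 2 (low frequencies).} Apply Corollary~\ref{cor:local-product}, with $\eta=c_\kappa S_g(\Gamma_0)$, to $V_{\Lambda,\tau}$. Combined with \eqref{eq:V-coeff-mass}, this yields \eqref{eq:low-frequency-at-tau}. The contraction \eqref{eq:low-frequency-contract} is spectral and unchanged.

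\textbf{Step 3 (high frequencies).} Lemma~\ref{lem:local-normal-trace} applied to $Q_\Lambda u(t)$ gives \eqref{eq:high-trace} on $\Gamma_0$, which suffices because $K_f\subset\Gamma_0\times(\tau-r,\tau+r)$. This yields \eqref{eq:g-Lambda-high}. The final-state decay \eqref{eq:high-final} is spectral and unchanged. Combining these as before gives the analogue of Proposition~\ref{prop:bridge}. Lemma~\ref{lem:frequency-optimization} is abstract and then gives \eqref{eq:local-one-step}, using $Y\le M$ by contractivity and $K_f\subset K$.

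\textbf{Main obstacle.} The main point to verify is that every trace statement is genuinely localized. First, the eigenfunctions $\varphi_j$ and the semigroup iterates must have conormal traces on $\Gamma_0$ in $L^2$ even though $D(A_D)\not\subset H^2(\M)$ globally. This is exactly what Lemma~\ref{lem:local-normal-trace} provides, and iterating it on $A_D^{k}v\in D(A_D)$ handles all time derivatives. Second, the measurable-representative argument in Lemma~\ref{lem:majorant} must be carried out on $\Gamma_0$ only. Third, the constants of Corollary~\ref{cor:local-product} must depend only on $\eta$, so that they are uniform in $a,b$; this is stated there.
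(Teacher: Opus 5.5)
Your proposal is correct and follows the paper's proof essentially step for step. You apply Lemma~\ref{lem:time-window} with $\Gamma=\Gamma_0$. You use Lemma~\ref{lem:local-normal-trace}, applied once to $A_D^{k}e^{-(\tau-a)A_D}P_\Lambda^Du(a)\in D(A_D)$, for both the localized analytic majorant and the high-frequency trace bound. You keep the time-jet transfer unchanged, replace the global product spectral inequality by Corollary~\ref{cor:local-product} with $\eta=c_\kappa S_g(\Gamma_0)$, and close with Lemma~\ref{lem:frequency-optimization} and $K_f\subset K$, exactly as the paper does.
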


\begin{proof}
Set $L=b-a$. Apply Lemma~\ref{lem:time-window} with $\Gamma=\Gamma_0$. It gives an interior time window, a spatial set $F\subset\Gamma_0$ with $S_g(F)\ge c_\kappa S_g(\Gamma_0)$, and an internal observation set $K_f\subset K$ whose time fibers have a uniform positive relative measure.

Define the low-frequency boundary heat trace on the regular patch by
\[
 g_\Lambda(x,t)=\partial_{\nu_g}P_\Lambda^Du(x,t),
 \qquad x\in\Gamma_0.
\]
By Lemma~\ref{lem:local-normal-trace}, for every $k\ge0$,
\[
 \norm{\partial_t^kg_\Lambda(\cdot,\tau)}_{L^2(\Gamma_0,dS_g)}
 \le C\norm{A_D^{k+1}e^{-(\tau-a)A_D}P_\Lambda^Du(a)}_{L^2(\M)}.
\]
The same spectral calculation as in the global proof gives the localized analytic majorant
\[
 \norm{M_\Lambda}_{L^2(\Gamma_0,dS_g)}
 \le\frac{C_\kappa}{L}\norm{u(a)}_{L^2(\M)}.
\]
The recovery of the full time jet on the measurable fibers and the identity of Lemma~\ref{lem:time-jet-identity} are unchanged. Hence
\begin{equation}
 \norm{\partial_{\nu_g}V_{\Lambda,\tau}}_{L^2(F\times I_*)}
 \le C_\kappa e^{C_\kappa/L}
 \norm{g_\Lambda}_{L^2(K_f)}^{\theta_\kappa}
 \norm{u(a)}_{L^2(\M)}^{1-\theta_\kappa}.
 \label{eq:local-transfer}
\end{equation}
Apply Corollary~\ref{cor:local-product} to $V_{\Lambda,\tau}$. Since its coefficients are $f_je^{-(\tau-a)\lambda_j}$,
\[
 \norm{P_\Lambda^Du(\tau)}_{L^2(\M)}
 \le C_\kappa e^{C_\kappa/L+C_\kappa\sqrt\Lambda}
 \norm{g_\Lambda}_{L^2(K_f)}^{\theta_\kappa}
 \norm{u(a)}_{L^2(\M)}^{1-\theta_\kappa}.
\]
The low-frequency part contracts between $\tau$ and $b$.

For the high-frequency boundary trace, every time in $K_f$ is at distance at least $c_\kappa L$ from $a$. Again by Lemma~\ref{lem:local-normal-trace},
\begin{align*}
 \norm{\partial_{\nu_g}Q_\Lambda u(t)}_{L^2(\Gamma_0,dS_g)}
 &\le C\norm{A_De^{-(t-a)A_D}Q_\Lambda u(a)}_{L^2(\M)}\\
 &\le C_\kappa L^{-1}e^{-c_\kappa L\Lambda}
 \norm{u(a)}_{L^2(\M)}.
\end{align*}
Integration over the interior time window gives
\[
 \norm{g_\Lambda}_{L^2(K_f)}
 \le\norm{\partial_{\nu_g}u}_{L^2(K_f)}
 +C_\kappa L^{-1/2}e^{-c_\kappa L\Lambda}\norm{u(a)}_{L^2(\M)}.
\]
The high-frequency final state satisfies
\[
 \norm{Q_\Lambda u(b)}_{L^2(\M)}
 \le e^{-L\Lambda}\norm{u(a)}_{L^2(\M)}.
\]
Thus the same bridge estimate as in the global proof follows, with observation restricted to $K_f$. Frequency optimization via Lemma~\ref{lem:frequency-optimization} gives
\[
 \norm{u(b)}_{L^2(\M)}
 \le C_\kappa e^{C_\kappa/L}
 \norm{\partial_{\nu_g}u}_{L^2(K_f)}^{\gamma_\kappa}
 \norm{u(a)}_{L^2(\M)}^{1-\gamma_\kappa}.
\]
Since $K_f\subset K$, this implies \eqref{eq:local-one-step}.
\end{proof}

\begin{proof}[Proof of Theorem~\ref{thm:localized}]
Let $J\subset\Gamma_0\times(0,T)$ be measurable with $\abs J>0$. Repeat the density-point construction of Section~\ref{sec:heat} with the fixed spatial patch $\Gamma_0$. Fubini's theorem produces a positive-measure set of times for which the spatial fibers of $J$ have uniform positive $dS_g$-measure. At a suitable density point, choose the same geometric sequence of time intervals and the corresponding pairwise disjoint blocks $K_m\subset J$.

By Proposition~\ref{prop:local-one-step}, the local interpolation estimate on every interval has constants independent of $m$. The weighted Young inequality, the choice $w_m=e^{-K_0/L_m}$, and the telescoping identity are measure theoretic and unchanged. Summing the one-step inequalities gives
\[
 \norm{u(\ell_1)}_{L^2(\M)}
 \le C_{J,T}\norm{\partial_{\nu_g}u}_{L^2(J)}.
\]
Since $\ell_1<T$ and the Dirichlet heat semigroup is contractive,
\[
 \norm{u(T)}_{L^2(\M)}
 \le\norm{u(\ell_1)}_{L^2(\M)}.
\]
Squaring proves \eqref{eq:localized-main}.
\end{proof}

\begin{corollary}
\label{cor:star-shaped}
Let $\Omega\subset\R^n$ be a bounded connected Lipschitz and locally star-shaped domain. Let $\Gamma_0\Subset\Gamma_1\subset\partial\Omega$, and assume that $\partial\Omega$ is $W^{2,\infty}$ near $\Gamma_1$. Then the conclusion of Theorem~\ref{thm:localized} holds for every measurable $J\subset\Gamma_0\times(0,T)$ of positive measure, with $g$ equal to the Euclidean metric.
\end{corollary}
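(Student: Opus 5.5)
The corollary will follow directly from Theorem~\ref{thm:localized} once its hypotheses are checked for a Euclidean domain. We take $\M=\overline\Omega$ with the Euclidean metric $g_{ij}=\delta_{ij}$. This metric is smooth, symmetric and uniformly elliptic, so it is in particular Lipschitz. A bounded connected Lipschitz domain carries a finite Lipschitz atlas, obtained by covering $\partial\Omega$ with finitely many Lipschitz graph charts and $\Omega$ with interior charts. With this atlas, $\overline\Omega$ is a compact connected Lipschitz Riemannian manifold with nonempty boundary. The Dirichlet Laplacian $A_D$ defined through the form $\mathfrak a$ on $H_0^1(\Omega)$ is then the usual Euclidean Dirichlet Laplacian. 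The assumption that $\partial\Omega$ is $W^{2,\infty}$ near $\Gamma_1$ is exactly the local regularity hypothesis of Theorem~\ref{thm:localized}. The relation $\Gamma_0\Subset\Gamma_1$ is also assumed directly.

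The only substantive hypothesis left is the intrinsic Lebeau--Robbiano property \eqref{eq:local-LR}. Here I would use the fact recorded before Lemma~\ref{lem:local-BM}: \cite[Theorem~3]{ApraizEtAl} shows that bounded Lipschitz, locally star-shaped domains satisfy the interior-ball spectral inequality \cite[(1.4)]{ApraizEtAl}. That inequality implies \eqref{eq:local-LR}, as follows. Fix a nonempty open set $\omega\Subset\Omega$. Choose a ball $B_{4R}(x_*)\Subset\omega$ and apply \cite[(1.4)]{ApraizEtAl} on $B_R(x_*)$. Since $B_R(x_*)\subset\omega$, this gives
\[
 \norm{P_\Lambda^Df}_{L^2(\Omega)}\le Ce^{C\sqrt\Lambda}\norm{P_\Lambda^Df}_{L^2(B_R(x_*))}\le Ce^{C\sqrt\Lambda}\norm{P_\Lambda^Df}_{L^2(\omega)}
\]
for every $\Lambda\ge1$ and every $f\in L^2(\Omega)$. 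The constant depends only on $\Omega$, $R$ and $x_*$, and these depend only on $\omega$.

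The step I expect to need the most care is matching conventions with \cite{ApraizEtAl}. I would check three things. First, whether their spectral cutoff is stated as $\lambda_j\le\Lambda$ or as $\sqrt{\lambda_j}\le\Lambda$; the exponent $e^{C\sqrt\Lambda}$ must be recovered after reparametrization. Second, that the $L^2$ norms are taken on the same sets. Third, that their standing assumptions (boundedness, Lipschitz regularity, connectedness, local star-shapedness) match the present ones.

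With all hypotheses in place, Theorem~\ref{thm:localized} applies to every measurable $J\subset\Gamma_0\times(0,T)$ with $\abs J>0$. It gives \eqref{eq:localized-main} with $g$ Euclidean, $dS_g$ the Euclidean surface measure, and $\partial_{\nu_g}$ the ordinary outward normal derivative. This is the claimed conclusion.
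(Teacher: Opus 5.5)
Your proposal is correct and follows the paper's proof exactly. You check that $\overline\Omega$ with the Euclidean metric fits the setting of Theorem~\ref{thm:localized}, derive \eqref{eq:local-LR} from \cite[Theorem~3]{ApraizEtAl} by applying the interior-ball inequality \cite[(1.4)]{ApraizEtAl} on $B_R(x_*)$ with $B_{4R}(x_*)\Subset\omega$, and then apply the theorem; your extra check of the cutoff convention, which the paper leaves implicit, is a sensible precaution.
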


\begin{proof}
Apraiz--Escauriaza--Wang--Zhang~\cite[Theorem~3]{ApraizEtAl} prove that every bounded Lipschitz and locally star-shaped domain satisfies their interior-ball spectral estimate \cite[(1.4)]{ApraizEtAl}. As observed after \eqref{eq:local-LR}, that estimate implies the intrinsic property \eqref{eq:local-LR}. Apply Theorem~\ref{thm:localized} with the Euclidean metric.
\end{proof}

\begin{corollary}
\label{cor:almost-everywhere-C11}
Assume that $(\M,g)$ is a compact connected Lipschitz Riemannian manifold satisfying \eqref{eq:local-LR}. Let
\[
 \mathcal R_{1,1}=\{x\in\partial\M:\partial\M\text{ is } W^{2,\infty}\text{ in a neighborhood of }x\}.
\]
If $S_g(\partial\M\setminus\mathcal R_{1,1})=0$, then every measurable $J\subset\partial\M\times(0,T)$ with $\abs J>0$ satisfies
\[
 \norm{u(T)}_{L^2(\M)}^2
 \le C_{\M,g,T,J}\int_J\abs{\partial_{\nu_g}u}^2\dd S_g\dd t.
\]
\end{corollary}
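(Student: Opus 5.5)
The plan is to reduce Corollary~\ref{cor:almost-everywhere-C11} to Theorem~\ref{thm:localized} by locating a regular patch on which $J$ still has positive measure.

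First I will show that the set $\mathcal R_{1,1}$ is relatively open in $\partial\M$: if $\partial\M$ is $W^{2,\infty}$ in a neighborhood $U$ of $x$, then it is $W^{2,\infty}$ in a neighborhood of every point of $U\cap\partial\M$. Since $S_g(\partial\M\setminus\mathcal R_{1,1})=0$, Fubini gives
\[
 \abs{J\cap(\mathcal R_{1,1}\times(0,T))}=\abs J>0 .
\]
Next I will write $\mathcal R_{1,1}$ as a countable union of relatively open sets $\Gamma_0^{(k)}\Subset\Gamma_1^{(k)}\Subset\mathcal R_{1,1}$. For instance, I can take small geodesic or coordinate balls around points of a countable dense subset of $\mathcal R_{1,1}$, with radii chosen so that the doubled balls still lie in a $W^{2,\infty}$ neighborhood. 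By Lindelöf these balls cover $\mathcal R_{1,1}$. The boundary is then $W^{2,\infty}$ in a neighborhood of each $\overline{\Gamma_1^{(k)}}$; this neighborhood can be made an open subset of $\overline\M$ by compactness, since finitely many $W^{2,\infty}$ charts cover the closure. Countable subadditivity then gives an index $k$ with
\[
 \abs{J\cap(\Gamma_0^{(k)}\times(0,T))}>0 .
\]

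Set $J'=J\cap(\Gamma_0^{(k)}\times(0,T))$. The hypotheses of Theorem~\ref{thm:localized} hold for $\Gamma_0=\Gamma_0^{(k)}$ and $\Gamma_1=\Gamma_1^{(k)}$: the manifold is Lipschitz, the metric is uniformly elliptic and Lipschitz, \eqref{eq:local-LR} holds by assumption, and the boundary is $W^{2,\infty}$ near $\Gamma_1$. The theorem therefore gives
\[
 \norm{u(T)}^2_{L^2(\M)}\le C\int_{J'}\abs{\partial_{\nu_g}u}^2\dd S_g\dd t\le C\int_{J}\abs{\partial_{\nu_g}u}^2\dd S_g\dd t .
\]
For the last inequality I need to interpret the right-hand side as an extended nonnegative quantity. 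The conormal trace is defined almost everywhere on $J$ for positive times because $J\setminus(\mathcal R_{1,1}\times(0,T))$ is null, and on $\mathcal R_{1,1}$ the trace is locally defined through Lemma~\ref{lem:local-normal-trace}. Monotonicity of the integral then gives the bound. The constant depends on $J$ through the choice of $k$, which is consistent with the notation $C_{\M,g,T,J}$.

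The main obstacle is the topological bookkeeping: making sure each $\Gamma_1^{(k)}$ has a genuine neighborhood in $\overline\M$, rather than just in $\partial\M$, on which the boundary is $W^{2,\infty}$ with uniform chart constants, as Theorem~\ref{thm:localized} requires. I will handle this by choosing the radii of the covering balls small enough that their closures lie inside a single $W^{2,\infty}$ chart domain attached to the center point. One such chart is then already a neighborhood in $\overline\M$.
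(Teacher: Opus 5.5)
Your proposal is correct and follows the paper's argument. The exceptional set is $S_g$-null, so $\abs{J\cap(\mathcal R_{1,1}\times(0,T))}=\abs J>0$. The relatively open, second-countable set $\mathcal R_{1,1}$ is covered by countably many pairs $\Gamma_{0,k}\Subset\Gamma_{1,k}$ near which the boundary is $W^{2,\infty}$. Countable subadditivity picks one pair on which $J$ has positive measure, and Theorem~\ref{thm:localized} together with monotonicity of the integral finishes the proof.

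One point of bookkeeping needs tightening. The covering should come either from a Lindel\"of subcover of balls centered at every point of $\mathcal R_{1,1}$, or from rational radii around a countable dense set. Fixing one radius per dense point does not by itself guarantee a cover.
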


\begin{proof}
Since the exceptional spatial set has surface measure zero,
\[
 \abs{J\cap(\mathcal R_{1,1}\times(0,T))}=\abs J>0.
\]
The relatively open set $\mathcal R_{1,1}$ is second countable and can be covered by countably many pairs $\Gamma_{0,k}\Subset\Gamma_{1,k}$ on which the boundary is $W^{2,\infty}$. At least one intersection $J\cap(\Gamma_{0,k}\times(0,T))$ has positive measure. Apply Theorem~\ref{thm:localized} to that intersection and use its inclusion in $J$.
\end{proof}

\begin{remark}
\label{rem:AEWZ-relation}
In the Euclidean setting, the uniform interior-ball spectral hypothesis \cite[(1.4)]{ApraizEtAl} is a concrete sufficient condition for the intrinsic spectral concentration assumption \eqref{eq:local-LR}. Their measurable-boundary theorem \cite[Theorem~2 and Remark~11]{ApraizEtAl} further assumes local real analyticity of the observed boundary patch; the cited remark permits piecewise analytic patches or a surface-null exceptional set. The present localized manifold theorem instead uses local $W^{2,\infty}$ regularity and an $L^2$ observation. Thus the geometric and spectral hypotheses in Theorem~\ref{thm:localized} are naturally modeled on, and extend to the manifold setting, the assumptions used in the AEWZ framework, while their $L^1$ observation remains stronger whenever both results apply.
\end{remark}

\begin{remark}
\label{rem:global-formulation}
The global $W^{2,\infty}$ manifold framework of Sections~\ref{sec:dirichlet}--\ref{sec:heat} treats the Dirichlet and Neumann spectral inequalities simultaneously and permits the Burq--Moyano concentration theorem to provide the global spectral input automatically. The present section separates that global spectral input from boundary regularity: for the Dirichlet heat equation, once \eqref{eq:local-LR} is available, $W^{2,\infty}$ regularity is needed only near the observed boundary patch, even when the remaining boundary is merely Lipschitz.
\end{remark}

\section{Proof of Theorem~\ref{thm:inverse-unique}: the initial-state inverse problem}
\label{sec:inverse}

We first record the semigroup representation needed for the positive-time solution class used above.

\begin{lemma}
\label{lem:homogeneous-semigroup}
Let
\[
 w\in C([0,T];L^2(\M))\cap H^1_{\mathrm{loc}}((0,T];L^2(\M))
 \cap L^2_{\mathrm{loc}}((0,T];H^2(\M))
\]
solve
\[
 \partial_tw-\Delta_gw=0\quad\text{in }\M\times(0,T),
 \qquad
 w|_{\partial\M\times(0,T)}=0
\]
in the positive-time trace sense. If $h=w(\cdot,0)$, then
\[
 w(t)=e^{-tA_D}h,
 \qquad 0\le t\le T.
\]
\end{lemma}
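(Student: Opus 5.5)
The plan is a Galerkin/energy uniqueness argument. Expand $w(t)$ in the Dirichlet eigenbasis and show that each coefficient solves a scalar ODE. Set
\[
 w_j(t)=(w(t),\phi_j)_{L^2(\M)},\qquad 0\le t\le T.
\]
Since $w\in C([0,T];L^2(\M))$, each $w_j$ is continuous on $[0,T]$ and $w_j(0)=(h,\phi_j)$. On every interval $[\varepsilon,T]$ we have $w\in H^1((\varepsilon,T);L^2(\M))$. Hence $w_j\in H^1(\varepsilon,T)$ and
\[
 w_j'(t)=(\partial_tw(t),\phi_j)=(\Delta_gw(t),\phi_j)
\]
for almost every $t$.

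The key step is the integration by parts identity
\[
 (\Delta_gw(t),\phi_j)=-\lambda_j w_j(t)
\]
for almost every $t\in(\varepsilon,T)$. For a.e.\ $t$ we have $w(t)\in H^2(\M)$, and the positive-time trace condition gives $w(t)|_{\partial\M}=0$, so $w(t)\in H^2(\M)\cap H^1_0(\M)$. By \eqref{eq:global-H2} in the proof of Lemma~\ref{lem:normal-trace}, this space equals $D(A_D)$. Hence $-\Delta_gw(t)=A_Dw(t)$, and self-adjointness gives $(A_Dw(t),\phi_j)=(w(t),A_D\phi_j)=\lambda_jw_j(t)$.

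An alternative avoids identifying the domain. Apply Green's formula twice on the $W^{2,\infty}$ manifold with Lipschitz metric, for $w(t)\in H^2$ and $\phi_j\in H^2\cap H^1_0$. The boundary terms are $\int_{\partial\M}(\partial_{\nu_g}w\,\phi_j-w\,\partial_{\nu_g}\phi_j)\dd S_g$, and both vanish because $\phi_j=0$ and $w(t)=0$ on $\partial\M$. This is the step I expect to need the most care: the trace condition holds only in the positive-time, a.e.-in-$t$ sense, so I must check that $w(t)|_{\partial\M}=0$ for a.e.\ $t$, as an element of $L^2((\varepsilon,T);H^{3/2}(\partial\M))$. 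This is precisely what the trace theorem quoted before \eqref{eq:dataset} provides.

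Given the identity, $w_j'=-\lambda_jw_j$ a.e.\ on $(\varepsilon,T)$ with $w_j\in H^1$, so $w_j(t)=e^{-\lambda_j(t-\varepsilon)}w_j(\varepsilon)$ there. Letting $\varepsilon\downarrow0$ and using continuity of $w_j$ at $0$ gives $w_j(t)=e^{-\lambda_jt}(h,\phi_j)$ on $[0,T]$. Since $\{\phi_j\}$ is an orthonormal basis of $L^2(\M)$, Parseval's identity gives $w(t)=\sum_je^{-\lambda_jt}(h,\phi_j)\phi_j=e^{-tA_D}h$ for every $t\in[0,T]$. The equality at $t=0$ holds by definition.
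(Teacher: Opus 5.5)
Your proof is correct. It shares the paper's overall structure. On each strip $(\varepsilon,T)$ the hypotheses place $w(t)$ in $H^2(\M)\cap H_0^1(\M)\subset D(A_D)$ for a.e.\ $t$, so $w$ is a strong solution of $w'+A_Dw=0$ there, and one then lets $\varepsilon\downarrow0$.

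The uniqueness mechanism is different, however. The paper compares $w$ with $e^{-(t-\varepsilon)A_D}w(\varepsilon)$ by an energy argument: the squared $L^2$ norm of the difference has derivative $-2\norm{A_D^{1/2}z}^2\le0$. It then passes to the limit using contractivity and strong continuity of the semigroup. You instead test against each Dirichlet eigenfunction, solve the scalar equation $w_j'=-\lambda_jw_j$ in $H^1(\varepsilon,T)$, take the limit coefficient by coefficient, and reassemble by Parseval.

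Each route has its own advantages:
\begin{itemize}
\item Your route is more elementary. It never needs to check that the comparison trajectory $e^{-(\cdot-\varepsilon)A_D}w(\varepsilon)$ is itself a strong solution up to time $\varepsilon$. It also uses only continuity of each $w_j$ at $t=0$, that is, weak continuity of $w$ there.
\item The energy route uses no eigenbasis, so it would apply unchanged to any nonnegative self-adjoint generator. Yours relies on the discrete Dirichlet spectrum, which is available here.
\end{itemize}

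Two minor remarks. First, only the easy inclusion $H^2(\M)\cap H_0^1(\M)\subset D(A_D)$ is needed. It follows from one application of Green's formula against $H_0^1$ test functions, rather than from the full regularity statement \eqref{eq:global-H2}. Second, your alternative argument does not need $\phi_j\in H^2(\M)$, which itself comes from the domain identification. A single Green's formula for $w(t)$ against $\phi_j\in H_0^1(\M)$ suffices, combined with the weak eigenvalue equation $\int_\M\ip{\nabla_g\phi_j}{\nabla_gv}_g\dd V_g=\lambda_j(\phi_j,v)$ for $v=w(t)\in H_0^1(\M)$. Together these already give $(\Delta_gw(t),\phi_j)=-\lambda_jw_j(t)$.
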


\begin{proof}
Fix $0<\varepsilon<t\le T$. On $(\varepsilon,t)$ one has
\[
 w\in H^1((\varepsilon,t);L^2(\M))
 \cap L^2((\varepsilon,t);D(A_D)),
 \qquad
 \partial_tw+A_Dw=0.
\]
The standard energy uniqueness argument for this strong problem gives
\[
 w(t)=e^{-(t-\varepsilon)A_D}w(\varepsilon).
\]
Indeed, the difference between the two sides is a homogeneous strong solution with zero value at $\varepsilon$, and differentiating its squared $L^2$ norm gives the nonpositive quantity $-2\norm{A_D^{1/2}z}^2$.
Since $w(\varepsilon)\to h$ in $L^2(\M)$ as $\varepsilon\downarrow0$, contractivity and strong continuity of the semigroup yield
\begin{align*}
 \norm{e^{-(t-\varepsilon)A_D}w(\varepsilon)-e^{-tA_D}h}_{L^2(\M)}
 &\le \norm{w(\varepsilon)-h}_{L^2(\M)}\\
 &\quad+\norm{(e^{-(t-\varepsilon)A_D}-e^{-tA_D})h}_{L^2(\M)}
 \longrightarrow0.
\end{align*}
Thus $w(t)=e^{-tA_D}h$ for $t>0$, and the identity at $t=0$ follows by continuity.
\end{proof}

We prove the unique identifiability theorem. The central point is that equality of the full lateral Dirichlet traces converts the difference of two admissible solutions into a homogeneous Dirichlet heat solution, to which Theorem~\ref{thm:heat-observability} applies.
\begin{proof}[Proof of Theorem~\ref{thm:inverse-unique}]
Let $u_1,u_2\in\mathscr H_T$ satisfy
\[
 \mathcal M_J^T(u_1)=\mathcal M_J^T(u_2).
\]
The first components of the datasets give
\[
 u_1=u_2
 \quad\text{on }\partial\M\times(0,T).
\]
Set
\[
 w=u_1-u_2,
 \qquad
 h=u_1(\cdot,0)-u_2(\cdot,0).
\]
Then
\begin{equation}
 \begin{cases}
  \partial_tw-\Delta_gw=0&\text{in }\M\times(0,T),\\
  w=0&\text{on }\partial\M\times(0,T),\\
  w(\cdot,0)=h&\text{in }\M.
 \end{cases}
 \label{eq:difference-homogeneous}
\end{equation}
By Lemma~\ref{lem:homogeneous-semigroup},
\[
 w(t)=e^{-tA_D}h.
\]
The second components of the datasets imply
\[
 \partial_{\nu_g}w=0
 \quad\text{a.e. on }J.
\]
By Theorem~\ref{thm:heat-observability},
\[
 \norm{e^{-TA_D}h}_{L^2(\M)}^2
 \le C\int_J\abs{\partial_{\nu_g}w}^2\dd S_g\dd t
 =0.
\]
Write
\[
 h=\sum_{j\ge1}h_j\phi_j.
\]
Then
\[
 0=e^{-TA_D}h
 =\sum_{j\ge1}e^{-T\lambda_j}h_j\phi_j.
\]
Since every multiplier $e^{-T\lambda_j}$ is strictly positive, orthogonality gives $h_j=0$ for all $j$, and hence
\[
 u_1(\cdot,0)=u_2(\cdot,0).
\]
Conversely, if the initial states and full Dirichlet traces agree, uniqueness of the forward initial--boundary value problem gives $u_1=u_2$, hence equality of the datasets. Applying Theorem~\ref{thm:heat-observability} to the same difference, without assuming that its conormal trace vanishes, proves \eqref{eq:terminal-stability}.

We next record the bounded positive-time observation operator and the stable terminal-state map, which will be used in the regularization argument. Let $J\subset\partial\M\times(0,T)$ have positive measure. Choose an integer $m_*\ge1$ such that $1/m_*<T$. Since
\[
 J=\bigcup_{m\ge m_*}
 J\cap\bigl(\partial\M\times(1/m,T)\bigr)
\]
up to the null slice at $t=0$, at least one member of this countable union has positive measure. Thus there exists $\tau\in(0,T)$ such that
\[
 J_\tau=J\cap\bigl(\partial\M\times(\tau,T)\bigr)
\]
has positive measure.

For $f\in L^2(\M)$ define
\begin{equation}
 \mathcal O_{J_\tau}f
 =\left.\partial_{\nu_g}e^{-tA_D}f\right|_{J_\tau}.
 \label{eq:observation-operator}
\end{equation}
This map is bounded. Indeed, by Lemma~\ref{lem:normal-trace},
\begin{align}
 \norm{\mathcal O_{J_\tau}f}_{L^2(J_\tau)}^2
 &\le C\int_\tau^T\norm{A_De^{-tA_D}f}_{L^2(\M)}^2\dd t
 \notag\\
 &=C\sum_{j=1}^\infty\lambda_j^2\abs{f_j}^2
 \int_\tau^Te^{-2t\lambda_j}\dd t
 \notag\\
 &\le\frac C2\sum_{j=1}^\infty
 \lambda_je^{-2\tau\lambda_j}\abs{f_j}^2
 \notag\\
 &\le\frac{C}{4e\tau}\norm f_{L^2(\M)}^2,
 \label{eq:observation-bounded}
\end{align}
where $f_j=(f,\phi_j)$ and
\[
 \sup_{\lambda\ge0}\lambda e^{-2\tau\lambda}=(2e\tau)^{-1}.
\]

Let
\[
 \mathcal Y_\tau=\Ran(\mathcal O_{J_\tau})\subset L^2(J_\tau).
\]
For $y=\mathcal O_{J_\tau}f$, define
\begin{equation}
 \mathcal R_Ty=e^{-TA_D}f.
 \label{eq:terminal-map}
\end{equation}
This is well defined. If
\[
 \mathcal O_{J_\tau}f_1=\mathcal O_{J_\tau}f_2,
\]
then Theorem~\ref{thm:heat-observability}, applied on $J_\tau$, gives
\[
 \norm{e^{-TA_D}(f_1-f_2)}_{L^2(\M)}=0.
\]
Therefore $e^{-TA_D}f_1=e^{-TA_D}f_2$. The same observability estimate gives
\begin{equation}
 \norm{\mathcal R_Ty}_{L^2(\M)}
 \le C_{\mathrm{obs}}\norm y_{L^2(J_\tau)},
 \qquad y\in\mathcal Y_\tau.
 \label{eq:terminal-recovery}
\end{equation}
Thus the terminal state is recovered Lipschitz stably on the data range.

This argument also explains why a naive time reversal is unnecessary. On the range of the heat semigroup,
\[
 f=e^{TA_D}u(T)
 =\sum_{j=1}^\infty e^{T\lambda_j}(u(T),\phi_j)\phi_j,
\]
but $e^{TA_D}$ is unbounded. Injectivity is enough for exact uniqueness; stable recovery of the initial state requires an a priori spectral smoothness bound. This completes the proof of Theorem~\ref{thm:inverse-unique}.
\end{proof}
\begin{remark}
\label{rem:product-observation}
If $\Gamma\subset\partial\M$ and $E\subset(0,T)$ have positive measures, then $J=\Gamma\times E$ has positive surface--time measure. Therefore the full lateral Dirichlet trace together with the conormal derivative on $\Gamma\times E$ uniquely determines the initial state.
\end{remark}

\section{Proof of Theorem~\ref{thm:inverse-log}: conditional stability, regularization, and sharpness}
\label{sec:stability}

Throughout this section, $A=A_D$. Let $u_1,u_2\in\mathscr H_T$ have the same full lateral Dirichlet trace, and set
\[
 h=u_1(\cdot,0)-u_2(\cdot,0),
 \qquad
 w=u_1-u_2.
\]
Then $w$ solves the homogeneous Dirichlet problem and Lemma~\ref{lem:homogeneous-semigroup} gives
\[
 w(t)=e^{-tA}h.
\]
Thus every argument below is performed on an affine class of solutions with fixed full Dirichlet trace. The homogeneous Dirichlet problem is the special class with zero trace.

\subsection{The logarithmic estimate}
\begin{proof}[Proof of Theorem~\ref{thm:inverse-log}]
Let $h\in D(A^{s/2})$ and fix $\mathfrak M\ge0$ such that
\[
 \norm{A^{s/2}h}_{L^2(\M)}\le\mathfrak M.
\]
Set
\[
 \varepsilon=\norm{e^{-TA}h}_{L^2(\M)}.
\]
We first prove the purely spectral backward-heat inequality
\begin{equation}
 \norm h_{L^2(\M)}
 \le C\mathfrak M
 \left[
  \log\left(e+\frac{\mathfrak M}{\varepsilon}\right)
 \right]^{-s/2}.
 \label{eq:backward-log}
\end{equation}
If $\mathfrak M=0$, positivity of $\lambda_1$ gives $h=0$. If $\varepsilon=0$, injectivity of the heat semigroup gives $h=0$. Assume $\mathfrak M,\varepsilon>0$ and write
\[
 h=\sum_{j\ge1}h_j\phi_j.
\]
For $R\ge\lambda_1$, let
\[
 P_R=\one_{[0,R]}(A),
 \qquad
 Q_R=\Id-P_R.
\]
The low-frequency part satisfies
\begin{align}
 \norm{P_Rh}_{L^2(\M)}^2
 &\le e^{2TR}
 \sum_{\lambda_j\le R}e^{-2T\lambda_j}\abs{h_j}^2
 \notag\\
 &\le e^{2TR}\varepsilon^2,
 \label{eq:backward-low}
\end{align}
while the a priori bound controls the high-frequency tail:
\begin{align}
 \norm{Q_Rh}_{L^2(\M)}^2
 &\le R^{-s}\sum_{\lambda_j>R}\lambda_j^s\abs{h_j}^2
 \notag\\
 &\le R^{-s}\mathfrak M^2.
 \label{eq:backward-high}
\end{align}
Hence
\begin{equation}
 \norm h_{L^2(\M)}^2
 \le e^{2TR}\varepsilon^2+R^{-s}\mathfrak M^2.
 \label{eq:backward-split}
\end{equation}
Put
\[
 L=\log\left(e+\frac{\mathfrak M}{\varepsilon}\right)\ge1.
\]
If $L$ is bounded by a fixed number depending on $T$ and $\lambda_1$, then
\[
 \norm h_{L^2(\M)}
 \le\lambda_1^{-s/2}\mathfrak M
\]
already has the form \eqref{eq:backward-log}. Otherwise choose
\[
 R=\frac{L}{2T}\ge\lambda_1.
\]
For $L\ge2$,
\[
 \frac{\mathfrak M}{\varepsilon}=e^L-e\ge\frac12e^L,
 \qquad
 \varepsilon\le2\mathfrak Me^{-L}.
\]
Inserting this into \eqref{eq:backward-split},
\[
 \norm h_{L^2(\M)}^2
 \le4\mathfrak M^2e^{-L}
 +(2T)^s\mathfrak M^2L^{-s}
 \le C\mathfrak M^2L^{-s}.
\]
Taking square roots proves \eqref{eq:backward-log}.

For the difference $w=u_1-u_2=e^{-tA}h$, set
\[
 \delta
 =\norm{\partial_{\nu_g}w}_{L^2(J_\tau)}
 =\norm{\partial_{\nu_g}(u_1-u_2)}_{L^2(J_\tau)}.
\]
The observability estimate gives
\[
 \varepsilon=\norm{e^{-TA}h}_{L^2(\M)}
 \le C_{\mathrm{obs}}\delta.
\]
Combining this with \eqref{eq:backward-log} yields \eqref{eq:main-log}, with $c=C_{\mathrm{obs}}^{-1}$ after a harmless adjustment of the outside constant.
\end{proof}
If the individual initial states satisfy
\[
 \norm{A^{s/2}f_i}_{L^2(\M)}\le\mathfrak M_0,
 \qquad i=1,2,
\]
then
\[
 \norm{A^{s/2}(f_1-f_2)}_{L^2(\M)}\le2\mathfrak M_0,
\]
so the same estimate holds with $\mathfrak M_0$ after renaming constants.

The estimate also gives a diameter bound for noisy-data admissible sets within a fixed full Dirichlet-trace class. Suppose $u_1,u_2\in\mathscr H_T$ have the same full lateral Dirichlet trace and that noisy conormal data $y^\delta\in L^2(J_\tau)$ satisfy
\[
 \norm{\partial_{\nu_g}u_i|_{J_\tau}-y^\delta}_{L^2(J_\tau)}\le\delta,
 \qquad i=1,2.
\]
Since the common Dirichlet trace cancels in the difference and Lemma~\ref{lem:homogeneous-semigroup} gives $u_1-u_2=e^{-tA}(f_1-f_2)$,
\[
 \norm{\mathcal O_{J_\tau}(f_1-f_2)}_{L^2(J_\tau)}
 =\norm{\partial_{\nu_g}(u_1-u_2)}_{L^2(J_\tau)}
 \le2\delta.
\]
Hence the set of all initial states in the a priori ball, within the prescribed full Dirichlet-trace class, which fit the partial conormal data to accuracy $\delta$ has $L^2$ diameter bounded by
\begin{equation}
 C\mathfrak M_0
 \left[
  \log\left(e+\frac{c\mathfrak M_0}{\delta}\right)
 \right]^{-s/2}.
 \label{eq:noisy-diameter}
\end{equation}

\subsection{A spectral-cutoff reconstruction on a fixed
Dirichlet-trace class}

Throughout this subsection, we write $A=A_D$. Recall that the
positive-time observation operator
\[
O_{J_\tau}:L^2(M)\longrightarrow L^2(J_\tau),
\qquad
O_{J_\tau}h
=
\left.\partial_{\nu_g}e^{-tA}h\right|_{J_\tau},
\]
is bounded. Set
\[
Y_\tau^0:=\operatorname{Ran}(O_{J_\tau})
    \subset L^2(J_\tau),
\qquad
\overline Y_\tau
:=
\overline{Y_\tau^0}^{\,L^2(J_\tau)}.
\]
We do not assume that $Y_\tau^0$ is closed.

For $y=O_{J_\tau}h\in Y_\tau^0$, define
\[
R_Ty:=e^{-TA}h.
\]
This definition is independent of the choice of $h$. Indeed, if
\[
O_{J_\tau}h_1=O_{J_\tau}h_2,
\]
then Theorem~1.3, applied to $e^{-tA}(h_1-h_2)$ on $J_\tau$,
gives
\[
e^{-TA}(h_1-h_2)=0.
\]
Moreover, the observability estimate~(6.5) yields
\[
\|R_Ty\|_{L^2(M)}
\le
C_{\mathrm{obs}}\|y\|_{L^2(J_\tau)},
\qquad y\in Y_\tau^0.
\]
Thus $R_T:Y_\tau^0\to L^2(M)$ is bounded and admits a unique
bounded extension
\[
\overline R_T:\overline Y_\tau\longrightarrow L^2(M),
\qquad
\|\overline R_T\|
\le C_{\mathrm{obs}}.
\]

Let
\[
\Pi_\tau:L^2(J_\tau)\longrightarrow\overline Y_\tau
\]
be the orthogonal projection, and define
\[
\widetilde R_T:=\overline R_T\Pi_\tau.
\]
Since $\|\Pi_\tau\|=1$, we have
\begin{equation}
\label{eq:extended-terminal-recovery}
\|\widetilde R_T\|
\le C_{\mathrm{obs}},
\qquad
\widetilde R_TO_{J_\tau}h=e^{-TA}h
\quad\text{for every }h\in L^2(M).
\end{equation}
Indeed, $O_{J_\tau}h\in Y_\tau^0\subset\overline Y_\tau$, and
hence
\[
\Pi_\tau O_{J_\tau}h=O_{J_\tau}h.
\]

For $R\ge\lambda_1$, let
\[
P_R=\mathbf 1_{[0,R]}(A),
\qquad
Q_R=I-P_R,
\]
and define the bounded spectral back-propagation operator
\begin{equation}
\label{eq:spectral-back-propagation}
B_Rz
:=
\sum_{\lambda_j\le R}
e^{T\lambda_j}
(z,\phi_j)_{L^2(M)}\phi_j,
\qquad z\in L^2(M).
\end{equation}
Then
\[
\|B_R\|\le e^{TR},
\qquad
B_Re^{-TA}h=P_Rh.
\]

We now pass from the homogeneous reconstruction to a fixed
full-Dirichlet-trace class. Fix a full lateral Dirichlet trace
$b$ and a reference solution $u^\circ\in\mathcal H_T$ having
that trace. Set
\[
f^\circ:=u^\circ(\cdot,0).
\]
For any other $u\in\mathcal H_T$ with the same full lateral
Dirichlet trace, put
\[
f:=u(\cdot,0),
\qquad
h:=f-f^\circ,
\qquad
z:=u-u^\circ.
\]
By Lemma~6.1, the difference $z$ satisfies the homogeneous
Dirichlet problem and hence
\[
z(t)=e^{-tA}h.
\]
Therefore, the residual partial conormal datum is
\begin{equation}
\label{eq:exact-residual-datum}
y
:=
\left.\partial_{\nu_g}(u-u^\circ)\right|_{J_\tau}
=
O_{J_\tau}h.
\end{equation}

Assume that, for some $s>0$,
\[
h\in D(A^{s/2}),
\qquad
\|A^{s/2}h\|_{L^2(M)}
\le\mathfrak M.
\]
Let $m^\delta\in L^2(J_\tau)$ be a noisy measurement of
$\left.\partial_{\nu_g}u\right|_{J_\tau}$ satisfying
\[
\left\|
m^\delta-\left.\partial_{\nu_g}u\right|_{J_\tau}
\right\|_{L^2(J_\tau)}
\le\delta.
\]
Since the reference solution is known, define the residual noisy
datum by
\[
y^\delta
:=
m^\delta-\left.\partial_{\nu_g}u^\circ\right|_{J_\tau}.
\]
It follows from \eqref{eq:exact-residual-datum} that
\[
\|y^\delta-y\|_{L^2(J_\tau)}
\le\delta.
\]

For every $R\ge\lambda_1$, define
\[
h_R^\delta:=B_R\widetilde R_Ty^\delta,
\qquad
f_R^\delta:=f^\circ+h_R^\delta.
\]
By \eqref{eq:extended-terminal-recovery},
\[
B_R\widetilde R_Ty
=
B_Re^{-TA}h
=
P_Rh.
\]
Consequently,
\[
\begin{aligned}
h-h_R^\delta
&=
Q_Rh+P_Rh-B_R\widetilde R_Ty^\delta \\
&=
Q_Rh+B_R\widetilde R_T(y-y^\delta).
\end{aligned}
\]
Using the a priori bound, the spectral theorem, and
\eqref{eq:extended-terminal-recovery}, we obtain
\begin{equation}
\label{eq:spectral-cutoff-error}
\begin{aligned}
\|f-f_R^\delta\|_{L^2(M)}
&=
\|h-h_R^\delta\|_{L^2(M)} \\
&\le
\|Q_Rh\|_{L^2(M)}
+
\|B_R\|\,\|\widetilde R_T\|\,
\|y-y^\delta\|_{L^2(J_\tau)} \\
&\le
R^{-s/2}\mathfrak M
+
C_{\mathrm{obs}}e^{TR}\delta.
\end{aligned}
\end{equation}
The first term is the spectral truncation error, whereas the
second describes the exponential amplification of the residual
data noise.

We next choose the cutoff parameter for arbitrary noise levels.
Assume first that $\mathfrak M>0$ and $\delta>0$, and set
\[
q_\delta
:=
\frac{C_{\mathrm{obs}}\delta}{\mathfrak M},
\qquad
L_\delta
:=
\log\bigl(e+q_\delta^{-1}\bigr).
\]
Define
\begin{equation}
\label{eq:all-noise-reconstruction}
\widehat h^\delta
:=
\begin{cases}
B_{R_\delta}\widetilde R_Ty^\delta,
    & 0<q_\delta<1,\\[2mm]
0,
    & q_\delta\ge1,
\end{cases}
\qquad
\widehat f^\delta
:=
f^\circ+\widehat h^\delta,
\qquad
R_\delta
:=
\max\left\{
\lambda_1,\frac{L_\delta}{2T}
\right\}.
\end{equation}
If $\mathfrak M=0$, then $h=0$; in this case we set
\[
\widehat h^\delta=0,
\qquad
\widehat f^\delta=f^\circ.
\]

Suppose that $0<q_\delta<1$. If
\[
\frac{L_\delta}{2T}\ge\lambda_1,
\]
then $R_\delta=L_\delta/(2T)$, and
\[
R_\delta^{-s/2}\mathfrak M
\le
C\mathfrak M L_\delta^{-s/2}.
\]
Moreover,
\[
\begin{aligned}
C_{\mathrm{obs}}e^{TR_\delta}\delta
&=
\mathfrak M q_\delta
\bigl(e+q_\delta^{-1}\bigr)^{1/2} \\
&\le
C\mathfrak M q_\delta^{1/2}
\le
C_s\mathfrak M L_\delta^{-s/2}.
\end{aligned}
\]
Applying \eqref{eq:spectral-cutoff-error} with
$R=R_\delta$ therefore gives
\[
\|f-\widehat f^\delta\|_{L^2(M)}
\le
C\mathfrak M L_\delta^{-s/2}.
\]

If instead
\[
\frac{L_\delta}{2T}<\lambda_1,
\]
then $R_\delta=\lambda_1$. Since $q_\delta<1$, we have
$C_{\mathrm{obs}}\delta<\mathfrak M$, and hence
\[
\begin{aligned}
\|f-\widehat f^\delta\|_{L^2(M)}
&\le
\lambda_1^{-s/2}\mathfrak M
+
C_{\mathrm{obs}}e^{T\lambda_1}\delta \\
&\le
\bigl(\lambda_1^{-s/2}+e^{T\lambda_1}\bigr)
\mathfrak M.
\end{aligned}
\]
In this regime,
\[
1\le L_\delta<2T\lambda_1,
\]
so, after enlarging the constant,
\[
\|f-\widehat f^\delta\|_{L^2(M)}
\le
C\mathfrak M L_\delta^{-s/2}.
\]

Finally, suppose that $q_\delta\ge1$. By
\eqref{eq:all-noise-reconstruction} and the spectral gap,
\[
\|f-\widehat f^\delta\|_{L^2(M)}
=
\|h\|_{L^2(M)}
\le
\lambda_1^{-s/2}\mathfrak M.
\]
Since
\[
1\le L_\delta\le\log(e+1),
\]
the same logarithmic estimate follows after another enlargement
of the constant. We have therefore proved that, for every
$\delta>0$,
\begin{equation}
\label{eq:all-noise-logarithmic-rate}
\|f-\widehat f^\delta\|_{L^2(M)}
\le
C\mathfrak M
\left[
\log\left(
e+\frac{\mathfrak M}{C_{\mathrm{obs}}\delta}
\right)
\right]^{-s/2}.
\end{equation}

For exact residual data $y=O_{J_\tau}h$, define
\[
h_R^0:=B_R\widetilde R_Ty.
\]
Then
\[
h_R^0=P_Rh,
\qquad
f_R^0
:=
f^\circ+h_R^0
=
f^\circ+P_R(f-f^\circ).
\]
It follows that
\[
f_R^0\longrightarrow f
\quad\text{in }L^2(M)
\quad\text{as }R\to\infty.
\]
Thus the spectral cutoff defines a reconstruction for arbitrary
$L^2(J_\tau)$ noise on every fixed full-Dirichlet-trace class,
once a reference solution in that class has been fixed. In the
homogeneous Dirichlet case, one takes
\[
u^\circ=0,
\qquad
f^\circ=0.
\]
The resulting logarithmic convergence rate agrees with that of
Theorem~\ref{thm:inverse-log}.

\subsection{Instability and optimality of the logarithmic power}

The inverse observation map cannot be continuous on unrestricted $L^2$ data. Let
\[
 J_\tau\subset\partial\M\times(\tau,T)
\]
and take $f_j=\phi_j$. By Lemma~\ref{lem:normal-trace},
\[
 \norm{\partial_{\nu_g}\phi_j}_{L^2(\partial\M)}
 \le C\lambda_j.
\]
Therefore
\begin{align}
 \norm{\mathcal O_{J_\tau}\phi_j}_{L^2(J_\tau)}^2
 &\le C\lambda_j^2\int_\tau^Te^{-2t\lambda_j}\dd t
 \notag\\
 &\le C\lambda_je^{-2\tau\lambda_j}\longrightarrow0,
 \label{eq:unrestricted-instability}
\end{align}
whereas $\norm{\phi_j}_{L^2(\M)}=1$.

To prove the sharp statements in Theorem~\ref{thm:inverse-log}, set
\[
 h_j=\mathfrak M\lambda_j^{-s/2}\phi_j.
\]
Then
\[
 \norm{A^{s/2}h_j}_{L^2(\M)}=\mathfrak M,
 \qquad
 \norm{h_j}_{L^2(\M)}=\mathfrak M\lambda_j^{-s/2},
\]
and the preceding calculation gives
\begin{equation}
 \delta_j:=\norm{\mathcal O_{J_\tau}h_j}_{L^2(J_\tau)}
 \le C\mathfrak M\lambda_j^{(1-s)/2}e^{-\tau\lambda_j}.
 \label{eq:sharp-data}
\end{equation}
If a H\"older estimate
\[
 \norm h_{L^2(\M)}
 \le C\mathfrak M^{1-\alpha}
 \norm{\mathcal O_{J_\tau}h}_{L^2(J_\tau)}^\alpha
\]
held for some $\alpha>0$ on the a priori ball, then \eqref{eq:sharp-data} would imply
\[
 \mathfrak M\lambda_j^{-s/2}
 \le C'\mathfrak M\lambda_j^{\alpha(1-s)/2}e^{-\alpha\tau\lambda_j},
\]
which is impossible as $j\to\infty$.

Now suppose an estimate of logarithmic form held with an exponent $\beta>s/2$. From \eqref{eq:sharp-data}, for sufficiently large $j$,
\[
 \log\left(e+\frac{c\mathfrak M}{\delta_j}\right)
 \ge\tau\lambda_j-\frac{1-s}{2}\log\lambda_j-C
 \ge\frac\tau2\lambda_j.
\]
The assumed estimate would give
\[
 \mathfrak M\lambda_j^{-s/2}
 \le C'\mathfrak M\lambda_j^{-\beta},
\]
which is impossible because $\beta>s/2$. This proves the sharpness part of Theorem~\ref{thm:inverse-log}.

\begin{remark}
\label{rem:density}
All global manifold results remain valid for the weighted symmetric operator
\[
 A_\rho=-\rho^{-1}\operatorname{div}_g(\rho\nabla_g),
\]
where $\rho$ is positive and Lipschitz. One works in $L^2(\M,\rho\,dV_g)$ and uses the corresponding weighted conormal derivative. The quantitative boundary continuation theorem and the rough-manifold spectral concentration theorem are formulated at this level of generality. The metric-normal coordinate construction, the blockwise reflection, the time-jet transfer, and all spectral arguments are unchanged after the natural weighted substitutions.
\end{remark}

\begin{remark}
\label{rem:Neumann-heat}
The parabolic observability and inverse initial-state theorems proved in this paper concern the homogeneous Dirichlet problem, for which the natural observation is the conormal heat flux. The Neumann boundary spectral inequality of Theorem~\ref{thm:neu-spectral} indicates a parallel extension to the homogeneous Neumann heat equation
\[
 \partial_tv-\Delta_gv=0,
 \qquad
 \partial_{\nu_g}v=0
 \quad\text{on }\partial\M\times(0,T).
\]
In that setting the natural boundary observation is the temperature trace $v|_J$, since the conormal derivative is prescribed to vanish. To implement the argument one replaces the Dirichlet conormal-trace estimate by the boundary trace estimate
\[
 \norm w_{L^2(\partial\M)}
 \le C\norm{(\Id+A_N)^{1/2}w}_{L^2(\M)},
 \qquad A_N=-\Delta_{g,N},
\]
and then repeats the time-jet transfer, low--high-frequency decomposition, frequency optimization, and weighted telescoping construction with the Neumann spectral projectors. The factor $\Id+A_N$ is needed to retain the zero Neumann mode; on the mean-zero subspace it may be replaced by $A_N$. These substitutions also point to analogous uniqueness, conditional logarithmic stability, and spectral regularization statements for recovery of the Neumann initial state from measurable boundary-temperature data. We do not carry out those parallel details here, and no Neumann parabolic conclusion is used elsewhere in the paper.
\end{remark}

\end{document}